\newtheorem{theorem}{Theorem}[section]
\newtheorem{remark}[theorem]{Remark}
\def\n{\textbf{\textit{n}}}
\def\R3{\mathbb{R}^3}
\def\F2o{\overline{F_2}}
\def\d{{\rm d}}
\def \l {\langle}
\def \r {\rangle}
\def\LL{{\mathbf{L}}}
\def\V{{\mathbf{V}}}
\def\H{\mathbf{H}}
\def\A{\mathbf{A}}
\def\f{\textbf{\textit{f}}}
\def\D {{\mathbb D}}
\def\uu{\textbf{\textit{u}}}
\def\vv{\textbf{\textit{v}}}
\def\ww{\textbf{\textit{w}}}
\def\ddt{\frac{\d}{\d t}}
\def\J{\mathbf{J}}
\def\P{\mathbb{P}}
\def\div{\mathrm{div}\,}
\def\L2{L^2(\Omega)}
\DeclareMathOperator*{\esssup}{ess\,sup}
\def \au {\rm}
\def \ti {\it}
\def \jou {\rm}
\def \bk {\it}
\def \no#1#2#3 {{\bf #1} (#3), #2.}
\def \eds#1#2#3 {#1, #2, #3.}
\def \nome#1#2 {{\bf #1}, (#2).}
\begin{document}
\title[The Abels-Garcke-Gr\"{u}n model in 3D]
{Existence and Stability of Strong Solutions to \\ the Abels-Garcke-Gr\"{u}n model in Three Dimensions}
\author[A. Giorgini ]{Andrea Giorgini}

\address{Department of Mathematics\\
Imperial College London\\
London, SW7 2AZ, UK}
\email{a.giorgini@imperial.ac.uk}
\date{\today}

\subjclass[2010]{35D35, 35Q35, 76D45, 76T06}

\keywords{AGG model, Navier-Stokes-Cahn-Hilliard system, unmatched densities, strong solutions}

\begin{abstract}
This work is devoted to the analysis of strong solutions to the Abels-Garcke-Gr\"{u}n (AGG) model in three dimensions. First, we prove the existence of local-in-time strong solutions originating from an initial datum $(\uu_0, \phi_0)\in  \H^1_\sigma \times H^2(\Omega)$ such that $\mu_0 \in H^1(\Omega)$ and $|\overline{\phi_0}|\leq 1$. For the subclass of initial data that are strictly separated from the pure phases, the corresponding strong solutions are locally unique. Finally, we show a stability estimate between the solutions to the AGG model and the model H. 
These results extend the analysis achieved by the author in {\it Calc. Var. (2021) 60:100} to three dimensional bounded domains.
\end{abstract}

\maketitle

\section{Introduction}
Given a domain $\Omega \subset \mathbb{R}^3$, we study the Abels-Garcke-Gr\"{u}n (AGG) model in $\Omega \times (0,T)$
\begin{equation}
 \label{AGG}
\begin{cases}
\partial_t (\rho(\phi)\uu) + \div \big( \uu \otimes (\rho(\phi) \uu + \widetilde{\J})\big) - \div (\nu(\phi) \D \uu) + \nabla P= - \div(\nabla \phi \otimes \nabla \phi)\\
\div \uu=0\\
\partial_t \phi +\uu\cdot \nabla \phi = \Delta \mu\\
\mu= -\Delta \phi+\Psi'(\phi),
\end{cases}
\end{equation}
completed with the following boundary and initial conditions
\begin{equation}
\label{AGG-bc}
\begin{cases}
\uu=\mathbf{0}, \quad \partial_\n\phi=\partial_\n \mu=0 \quad &\text{on }  \partial \Omega \times (0,T),\\
\uu(\cdot, 0)=\uu_0, \quad \phi(\cdot, 0)=\phi_0 \quad &\text{in } \Omega.
\end{cases}
\end{equation}
Here, $\n$ is the unit outward normal vector on $\partial \Omega$, and
 $\partial_\n$ denotes the outer normal derivative on $\partial \Omega$.
In the system, $\uu=\uu(x,t)$ represents the volume averaged velocity, $P=P(x,t)$ is the pressure of the mixture, and $\phi=\phi(x,t)$ is the difference of the fluids concentrations. The operator $\D$ is the symmetric gradient $\frac12 (\nabla +\nabla^T)$ . The flux term $\widetilde{\J}$, the density $\rho$ and the viscosity $\nu$ of the mixture are defined as
\begin{equation}
\label{Jrhonu}
\widetilde{\J}= -\frac{\rho_1-\rho_2}{2}\nabla \mu, \quad \rho(\phi)= \rho_1 \frac{1+\phi}{2}+ \rho_2 \frac{1-\phi}{2}, \quad
\nu(\phi)=\nu_1 \frac{1+\phi}{2}+ \nu_2 \frac{1-\phi}{2},
\end{equation}
where $\rho_1$, $\rho_2$ and $\nu_1$, $\nu_2$ are the positive homogeneous density and viscosity parameters of the two fluids.  
The homogeneous free energy density $\Psi$ is the Flory-Huggins potential 
\begin{equation}  
\label{Log}
\Psi(s)=F(s)-\frac{\theta_0}{2}s^2=\frac{\theta}{2}\bigg[ (1+s)\log(1+s)+(1-s)\log(1-s)\bigg]-\frac{%
\theta_0}{2} s^2, \quad s \in [-1,1],
\end{equation}
where the constant parameters $\theta$ and $\theta_0$ fulfill the conditions $0<\theta<\theta_0$. In the sequel, we will often use the non-conservative form of \eqref{AGG}$_1$ 
\begin{equation}
\label{NS-nc}
\rho(\phi) \partial_t \uu + \rho(\phi) (\uu \cdot \nabla)\uu
-\rho'(\phi) (\nabla \mu\cdot \nabla) \uu - \div (\nu(\phi)\D\uu) + \nabla P= - \div(\nabla \phi \otimes \nabla \phi).
\end{equation}
We also recall the total energy associated to system \eqref{AGG} given by
$$
E(\uu,\phi)= E_{\text{kin}}(\uu, \phi) + E_{\text{free}}(\phi)= 
\int_{\Omega}  \frac12 \rho(\phi) |\uu|^2 \, \d x + \int_{\Omega} \frac12 |\nabla \phi|^2 + \Psi(\phi)   \, \d x,
$$
and the corresponding energy equation that reads as
 \begin{equation}
\label{EE}
\ddt E(\uu, \phi) +\int_{\Omega} \nu(\phi) |\D \uu|^2 \, \d x+ 
\int_{\Omega} |\nabla \mu|^2 \, \d x =0.
\end{equation}

The AGG system is a primary model in the theory of diffuse interface (phase field) modeling, which describes the motion of two viscous incompressible fluids with different densities. It was proposed in the seminal work \cite{AGG2012} (see also \cite{AG2018}). The well-known model H is recovered from \eqref{AGG} in the case of matched densities $\rho_1=\rho_2$ (see \cite{GPV1996} for the derivation and \cite{A2009, GMT2019} for the analysis of the model H). The existence of global weak solutions (with finite energy) to the AGG model \eqref{AGG}-\eqref{AGG-bc} has been established in the case of non-degenerate mobility in \cite{ADG2013} and in the case of degenerate mobility in \cite{ADG2013-2}. Global weak solutions were also  proven for viscous non-Newtonian fluids in \cite{AB2018} and in the case of dynamic boundary conditions describing moving contact lines in  \cite{GalGW2019}. Further generalizations to nonlocal versions of the AGG model have been studied in \cite{AT2020} for fractional free energies and in \cite{Frigeri2016} and \cite{Frigeri2020} for free energy with regular convolution kernels. More recently, the existence and uniqueness of regular solutions have been studied in \cite{AW2020} and \cite{G2021}. In \cite{AW2020}, the local well-posedness of strong solutions is proven in three dimensions for polynomial-like potentials $\Psi$ provided that $\uu_0 \in \H^1_\sigma$ and $\phi_0 \in (L^p(\Omega),W^{4}_{p,N}(\Omega))_{1-\frac{1}{p},p}$ for $4<p<6$ (in this range of $p$, $\phi_0 \in H^3(\Omega)$) such that 
$\| \phi_0\|_{L^\infty}\leq 1$. It is worth mentioning that the solution in \cite{AW2020} may not satisfy $| \phi(x,t)|\leq 1$ for all positive times. In \cite{G2021}, the local well-posedness of strong solutions in two dimensional bounded domains has been achieved for the logarithmic potential \eqref{Log} case with initial conditions $(\uu_0, \phi_0)\in  \H^1_\sigma \times H^2(\Omega)$ such that $\mu_0 \in H^1(\Omega)$ and $|\overline{\phi_0}|\leq 1$. In this case, the solution satisfies the physical bound $| \phi(x,t)|\leq 1$ for all times. In addition, in the case of periodic boundary conditions, the strong solutions are shown to be globally defined in time in \cite{G2021}. We also refer the interested reader to \cite{B2002, DSS2007, GKL2018, HMR2012, LT1998, SSBZ2017} and \cite{A2009-2, A2012, AF2008, B2001, CFMMPP2019, GT2020, KZ2015} for the modeling and the analysis of different diffuse interface models with unmatched densities.

The purpose of the present contribution is to study the well-posedness of strong solutions to the AGG model \eqref{AGG}-\eqref{AGG-bc} in bounded domains in $\mathbb{R}^3$. In particular, we aim at generalizing the analysis obtained in \cite{G2021} to the three dimensional case. The first result regarding the existence and uniqueness of strong solutions reads as follows.
\begin{theorem}
\label{mr1}
Let $\Omega$ be a bounded domain of class $C^3$ in $\mathbb{R}^3$. Assume that $\uu_0 \in \H^1_\sigma$ and $\phi_0 \in H^2(\Omega)$ such that $\| \phi_0\|_{L^\infty}\leq 1$, $|\overline{\phi_0}|<1$, $\mu_0= -\Delta \phi_0+ \Psi'(\phi_0) \in H^1(\Omega)$, and $\partial_\n \phi_0=0$ on $\partial \Omega$. Then, there exist $T_0>0$, depending on the norms of the initial data, and (at least) a strong solution $(\uu, P, \phi)$ to system 
\eqref{AGG}-\eqref{AGG-bc} on $(0,T_0)$ in the following sense:
\begin{itemize}
\item[(i)] The solution $(\uu, P, \phi)$ satisfies the properties
\begin{align}
\label{reg-SS}
\begin{split}
&\uu \in C([0,T_0]; \H^1_\sigma) \cap L^2(0,T_0;\H^2_\sigma)\cap W^{1,2}(0,T_0;\LL^2_\sigma),\quad P \in L^2(0,T_0;H^1(\Omega)),\\
&\phi \in L^\infty(0,T_0;W^{2,6}(\Omega)), \
\partial_t \phi \in L^\infty(0,T_0;(H^1(\Omega))')\cap L^2(0,T_0;H^1(\Omega)),\\
&\phi \in L^\infty(\Omega\times (0,T_0)) : |\phi(x,t)|<1 \ \text{a.e. in } \  \Omega\times(0,T_0),\\
&\mu \in L^\infty(0,T_0;H^1(\Omega))\cap L^2(0,T_0;H^3(\Omega)),
\quad F'(\phi) \in L^\infty(0,T_0;L^6(\Omega)).
\end{split}
\end{align}

\item[(ii)] The solution $(\uu, P, \phi)$ fulfills the system \eqref{AGG} almost everywhere in $\Omega \times (0,T_0)$ and the boundary conditions $\partial_\n \phi=\partial_\n \mu=0$ almost everywhere in $\partial \Omega \times (0,T_0)$. 
\end{itemize}
Furthermore, if additionally $\| \phi_0\|_{L^\infty}=1-\delta_0$, for some $\delta_0>0$, then the solution is locally unique. This is, there exists a time $T_1: 0<T_1<T_0$, depending only on the norm of the initial data and $\delta_0$, such that the solution is unique on the time interval $[0,T_1)$.
\end{theorem}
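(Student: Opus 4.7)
The plan is to follow the regularization--approximation--compactness strategy of \cite{G2021}, with the bulk of the effort devoted to closing higher-order a priori estimates in three dimensions. First, replace the singular part $F$ of $\Psi$ by a $C^2$ approximation $F_\lambda$ ($\lambda\in(0,1]$) that coincides with $F$ on $[-1+\lambda,1-\lambda]$ and is extended smoothly and convexly outside; then construct solutions to the regularized system by a semi-Galerkin scheme based on the eigenfunctions of the Stokes operator for $\uu$, with the Cahn--Hilliard equation solved exactly for each Galerkin velocity iterate. Existence at the Galerkin level follows from Carath\'eodory and classical parabolic theory.

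The core step is to derive a priori estimates uniform in both the Galerkin index and $\lambda$ on a time interval $[0,T_0]$ depending only on the initial norms and on $1-|\overline{\phi_0}|$. Beyond the energy balance \eqref{EE}, to reach the regularity \eqref{reg-SS} I would (a) test the non-conservative momentum equation \eqref{NS-nc} with $\partial_t\uu$, using the identity $-\div(\nabla\phi\otimes\nabla\phi)=\mu\nabla\phi-\nabla(\tfrac12|\nabla\phi|^2+\Psi_\lambda(\phi))$ to rewrite the capillary term and absorb the gradient part into the pressure; (b) test the Cahn--Hilliard equation with $\partial_t\mu=-\Delta\partial_t\phi+\Psi_\lambda''(\phi)\partial_t\phi$, exploiting the nonnegativity of the singular part of $\Psi_\lambda''$; (c) apply elliptic regularity for the Stokes operator to control $\|\uu\|_{H^2}$, and for $-\Delta\phi+F_\lambda'(\phi)=\mu+\theta_0\phi$ combined with the classical testing of the latter against $|F_\lambda'(\phi)|^{4}F_\lambda'(\phi)$ to bound $\|F_\lambda'(\phi)\|_{L^6}$ and hence $\|\phi\|_{W^{2,6}}$. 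Collecting these estimates yields a differential inequality of the form $y'(t)\leq C(1+y(t)^\alpha)$ with some $\alpha>1$, where $y(t)$ is the squared sum of the relevant higher-order norms; this produces local existence up to a time $T_0$ bounded from below by the initial data. Passing to the limit via the Aubin--Lions lemma (first in the Galerkin index, then in $\lambda\to 0$) recovers the strong solution; the constraint $|\phi|<1$ a.e. follows from the uniform $L^\infty(0,T_0;L^6(\Omega))$ bound on $F_\lambda'(\phi_\lambda)$.

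The main obstacle I anticipate is closing these estimates in 3D, where the weaker Sobolev embeddings render several nonlinear interactions critical. The convective term $\rho(\phi)(\uu\cdot\nabla)\uu\cdot\partial_t\uu$ forces use of the sharp Gagliardo--Nirenberg inequality $\|\nabla\uu\|_{L^3}\lesssim\|\uu\|_{H^1}^{1/2}\|\uu\|_{H^2}^{1/2}$, and the unmatched-density coupling $\rho'(\phi)(\nabla\mu\cdot\nabla)\uu\cdot\partial_t\uu$ similarly demands delicate interpolation between the $H^1$-norm of $\nabla\mu$ and the $L^6$-norm of $\nabla\uu$. Both are handled by Young's inequality, absorbing the $H^2$- and $H^3$-contributions on the left-hand side; the price is the super-linear right-hand side $y^\alpha$ with $\alpha>1$, which is precisely why only local-in-time existence is obtained (in contrast with the quadratic inequality available in the 2D treatment of \cite{G2021}).

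For local uniqueness under the strict separation $\|\phi_0\|_{L^\infty}\leq 1-\delta_0$, the regularity \eqref{reg-SS} together with the 3D embedding $W^{2,6}(\Omega)\hookrightarrow C(\overline{\Omega})$ and the available time-regularity of $\phi$ yields $\phi\in C([0,T_0];C(\overline{\Omega}))$; hence the strict separation propagates on some shorter interval $[0,T_1]$, giving $|\phi(x,t)|\leq 1-\delta_0/2$ and so $\Psi''(\phi)\in L^\infty(\Omega\times(0,T_1))$. The equations for the differences of two solutions can then be tested against the velocity difference in $\LL^2_\sigma$ and against the Neumann inverse Laplacian of the (zero-mean) phase difference, yielding a Gronwall inequality that closes the uniqueness estimate on $[0,T_1]$.
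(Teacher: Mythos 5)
Your overall architecture (semi-Galerkin in the velocity, higher-order energy estimates obtained by testing with $\partial_t\uu$, $\partial_t\mu$ and the Stokes operator, a superlinear differential inequality giving a local time $T_0$, Aubin--Lions compactness, and propagation of strict separation for uniqueness) matches the paper. However, there are two genuine gaps. First, you regularize the singular potential by a smooth convex $F_\lambda$; the paper instead keeps $F$ singular and approximates via the convective \emph{viscous} Cahn--Hilliard equation together with a regularization of the initial datum (see Appendix \ref{App-0}), precisely so that every approximate $\phi_m$ satisfies $|\phi_m|\leq 1-\widetilde{\delta}$ and so that $\partial_t\mu_m\in L^2(0,T;L^2(\Omega))$ is available to justify the key test function $\partial_t\mu_m$ rigorously. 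With $F_\lambda$ the approximate phase field is not confined to $[-1,1]$, and since $\rho(\phi)=\rho_1\frac{1+\phi}{2}+\rho_2\frac{1-\phi}{2}$ and $\nu(\phi)$ are affine, they can degenerate or change sign once $\phi_\lambda$ leaves a neighbourhood of $[-1,1]$; you would have to truncate $\rho$ and $\nu$ as well and then show the truncation is inactive in the limit. You do not address this, and without it the coercivity of the momentum equation and the positivity of the kinetic energy are lost along the approximation.

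Second, and more decisively, your uniqueness scheme (velocity difference in $\LL^2_\sigma$ paired with the phase difference measured through the Neumann inverse Laplacian) cannot close. The momentum equation for the difference contains the term $\frac{\rho_1-\rho_2}{2}\big((\nabla\mu\cdot\nabla)\uu_2\big)\cdot\uu$ with $\mu=-\Delta\varphi+\Psi'(\phi_1)-\Psi'(\phi_2)$, so after testing with $\uu$ one must control $\|\nabla\Delta\varphi\|_{L^6}$-type quantities; the dual-norm estimate for $\varphi$ only produces the dissipation $\|\nabla\varphi\|_{L^2}^2$, which is several derivatives short. This is exactly why the paper remarks that the dual-space argument of \cite{GMT2019} fails for non-constant density: the proof instead takes the gradient of the phase difference equation, tests against $\nabla\Delta\varphi$, and runs the Gronwall argument at the level $\|\uu\|_{L^2}^2+\|\Delta\varphi\|_{L^2}^2$ with the dissipation $\|\Delta^2\varphi\|_{L^2}^2$ absorbing the troublesome term $Z_4$. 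Your separation step (via $W^{2,6}(\Omega)\hookrightarrow C(\overline{\Omega})$ and the H\"older-in-time continuity of $\phi$) is correct and is the same as the paper's, but the energy identity you then propose for the differences would not yield a closable Gronwall inequality.
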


Before proceeding with our second result, it is worth mentioning that the proof of Theorem \ref{mr1}, although still based on a semi-Galerkin approximation,  differs from the one of \cite[Theorem 3.1]{G2021} for several aspects. First, the proof of \cite[Theorem 3.1]{G2021} exploited the continuity of the chemical potential and the regularity of its time derivative, which are properties available for the strong solutions of the convective Cahn-Hilliard equation in two dimensions. Since these are still an open question in three dimensions, we overcome this issue by employing an approximation procedure involving the convective viscous Cahn-Hilliard equation (see Appendix \ref{App-0}), together with an appropriate regularization of the initial datum. Such approximations are crucial to rigorously justify the higher-order Sobolev estimates obtained for the approximate solutions. Secondly, due to the lack of global-in-time separation property in three dimensions, we show local uniqueness of solutions departing from a subclass of initial data such that $\| \phi_0\|_{L^\infty}<1$. For such class of solutions, the separation property holds on a (possible short) time interval by embedding in 
H\"{o}lder spaces. Notice that the argument proposed in \cite{GMT2019} based on estimates in dual spaces cannot be used due to the non-constant density. Moreover, the separation property (or, at least, $L^p$-estimates of $\Psi''(\phi)$ and $\Psi'''(\phi)$) seems to be necessary to control the additional term $\rho'(\phi)(\nabla \mu \cdot \nabla) \uu$. Furthermore, the proof of the uniqueness relies on estimates of higher-order Sobolev spaces compared to the argument in \cite[Theorem 3.1]{G2021}, which is due to the above mentioned novel term $\rho'(\phi)(\nabla \mu \cdot \nabla) \uu$ in \eqref{AGG}$_1$.

Next, we prove a stability result between the strong solutions to the AGG model and the model H departing from the same initial datum in terms of the density values.
\begin{theorem}
\label{stab}
Let $\Omega$ be a bounded domain of class $C^3$ in $\mathbb{R}^3$. Given an initial datum $(\uu_0, \phi_0)$ as in Theorems \ref{mr1}, we consider the strong solution $(\uu, P, \phi)$ to the AGG model with density \eqref{Jrhonu} and the strong solution $(\uu_H, P_H, \phi_H)$ to the model H with constant density $\overline{\rho}>0$, both defined on $[0,T_0]$. Then, there exists a constant $C$, that depends on the norm of the initial data, the time $T_0$ and the parameters of the systems, such that 
\begin{equation}
\label{StaE}
\sup_{t\in [0,T_0]} \| \uu(t)-\uu_H(t)\|_{(\H^1_\sigma)'}
+ \sup_{t\in [0,T_0]} \| \phi(t)-\phi_H(t) \|_{(H^1)'}
\leq  C \Big(  \Big| \frac{\rho_1-\rho_2}{2}\Big| + \Big| \frac{\rho_1+\rho_2}{2}- \overline{\rho}\Big| \Big).
\end{equation}
\end{theorem}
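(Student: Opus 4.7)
Set $\uu_d := \uu - \uu_H$, $\phi_d := \phi - \phi_H$, $\mu_d := \mu - \mu_H$, and $\alpha := \tfrac{\rho_1-\rho_2}{2}$, $\beta := \tfrac{\rho_1+\rho_2}{2} - \overline{\rho}$, so that $\rho(\phi) - \overline{\rho} = \alpha\phi + \beta$, $\nu(\phi) - \nu(\phi_H) = \tfrac{\nu_1-\nu_2}{2}\phi_d$, and the right-hand side of \eqref{StaE} coincides with $C(|\alpha|+|\beta|)$. Since the two solutions share the initial data and the Cahn--Hilliard subsystem preserves the spatial mean, $\uu_d(0) = \mathbf{0}$, $\phi_d(0) = 0$, and $\overline{\phi_d}(t) \equiv 0$ on $[0,T_0]$; in particular the inverse Neumann Laplacian $\mathcal{N}\phi_d \in H^2(\Omega)$ is well-defined, with $\|\nabla\mathcal{N}\phi_d\|_{L^2}$ equivalent to $\|\phi_d\|_{(H^1)'}$.

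After subtracting the model-H momentum balance from \eqref{NS-nc} and rewriting $-\div(\nabla\phi\otimes\nabla\phi) = \mu\nabla\phi$ modulo a gradient absorbed into $P$, one obtains
\begin{equation*}
\overline{\rho}\,\partial_t \uu_d - \div\bigl(\nu(\phi_H)\D\uu_d\bigr) + \nabla P_d = \mathcal{R}_v,
\end{equation*}
where $\mathcal{R}_v$ gathers the convective difference $-\overline{\rho}[(\uu_d\cdot\nabla)\uu + (\uu_H\cdot\nabla)\uu_d]$, the density-mismatch source $-(\alpha\phi+\beta)[\partial_t\uu + (\uu\cdot\nabla)\uu]$, the specifically AGG term $\alpha(\nabla\mu\cdot\nabla)\uu$, the viscosity mismatch $\tfrac{\nu_1-\nu_2}{2}\div(\phi_d\,\D\uu)$, and the capillary mismatch $\mu\nabla\phi_d + \mu_d\nabla\phi_H$. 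Analogously, the Cahn--Hilliard difference reads $\partial_t\phi_d + \uu\cdot\nabla\phi_d + \uu_d\cdot\nabla\phi_H = \Delta\mu_d$ with $\mu_d = -\Delta\phi_d + \Psi'(\phi) - \Psi'(\phi_H)$. I would test the velocity equation by $\mathcal{S}^{-1}\uu_d$, where $\mathcal{S}$ denotes the Dirichlet Stokes operator, and the Cahn--Hilliard equation by $\mathcal{N}\phi_d$, so that the time derivatives produce $\tfrac12\tfrac{d}{dt}\|\uu_d\|_{(\H^1_\sigma)'}^2$ and $\tfrac12\tfrac{d}{dt}\|\phi_d\|_{(H^1)'}^2$, while the diffusion yields (modulo variable-viscosity remainders) a positive contribution comparable to $\|\uu_d\|_{L^2}^2$ through the identity $\int \D\uu_d:\D\mathcal{S}^{-1}\uu_d = \tfrac12\|\uu_d\|_{L^2}^2$, and a dissipation $\|\nabla\phi_d\|_{L^2}^2$ from $-\int\mu_d\phi_d$, after using the monotonicity of the convex part $F'$ of $\Psi'$ and reabsorbing the concave correction $-\theta_0\|\phi_d\|_{L^2}^2$ via $\|\phi_d\|_{L^2}^2\lesssim \|\nabla\phi_d\|_{L^2}\|\phi_d\|_{(H^1)'}$ (valid since $\overline{\phi_d}=0$).

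Every remaining term falls into two classes. Those carrying an explicit factor $\alpha$ or $\beta$ are bounded by $(|\alpha|+|\beta|)$ times a time-integrable coefficient coming from the strong-solution regularity granted by Theorem~\ref{mr1}: $\uu,\uu_H\in L^\infty(\H^1_\sigma)\cap L^2(\H^2)$, $\partial_t\uu, \partial_t\uu_H\in L^2(\LL^2_\sigma)$, $\phi,\phi_H\in L^\infty(W^{2,6})$, and $\mu,\mu_H\in L^\infty(H^1)\cap L^2(H^3)$. Those linear in $\uu_d$ or $\phi_d$ are reabsorbed into the dissipation $\|\uu_d\|_{L^2}^2 + \|\nabla\phi_d\|_{L^2}^2$ or into $\|\uu_d\|_{(\H^1_\sigma)'}^2 + \|\phi_d\|_{(H^1)'}^2$, via three-dimensional Sobolev embeddings and interpolations such as $\|\uu_d\|_{L^2}^2\lesssim\|\uu_d\|_{\H^1}\|\uu_d\|_{(\H^1_\sigma)'}$ (whose first factor is uniformly bounded). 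Summing the two inequalities yields $\tfrac{d}{dt}\bigl(\|\uu_d\|_{(\H^1_\sigma)'}^2 + \|\phi_d\|_{(H^1)'}^2\bigr) \leq f(t)\bigl(\|\uu_d\|_{(\H^1_\sigma)'}^2 + \|\phi_d\|_{(H^1)'}^2\bigr) + g(t)(|\alpha|+|\beta|)^2$ with $f,g\in L^1(0,T_0)$, and Gronwall's lemma with null initial data delivers \eqref{StaE}. The main obstacle is precisely the specifically AGG contribution $\alpha(\nabla\mu\cdot\nabla)\uu$: although proportional to $\alpha$, it is a product of derivatives that would be intractable in an $\LL^2$-energy setting, and its control requires one integration by parts onto the more regular test $\mathcal{S}^{-1}\uu_d \in L^\infty(\H^2)$ together with the bound $\nabla\mu \in L^2(0,T_0;L^6)$ coming from $\mu \in L^2(0,T_0;H^3)$ --- a regularity provided by Theorem \ref{mr1} but not shared by the analogous two-dimensional or weak-solution frameworks.
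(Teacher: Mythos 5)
Your plan is correct and follows essentially the same route as the paper: testing the momentum difference with the inverse Stokes operator and the Cahn--Hilliard difference with the inverse Neumann Laplacian, splitting the remainders into terms carrying an explicit factor $\big|\frac{\rho_1-\rho_2}{2}\big|$ or $\big|\frac{\rho_1+\rho_2}{2}-\overline{\rho}\big|$ (controlled by the strong-solution regularity) and terms absorbable into the dissipation or the dual norms, then concluding by Gronwall with zero initial data. The only cosmetic differences are the choice of leading coefficient ($\overline{\rho}$ versus $\frac{\rho_1+\rho_2}{2}$) and your suggestion to integrate the AGG term $\frac{\rho_1-\rho_2}{2}(\nabla\mu\cdot\nabla)\uu$ by parts: the paper instead bounds it directly by $\|\nabla\mu\|_{L^2}\|\nabla\uu\|_{L^3}\|\A^{-1}\vv\|_{L^6}$, which suffices since $\|\nabla\uu\|_{L^3}^2$ is time-integrable, though your variant would also work.
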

\begin{remark}
Assuming that $\rho_1=\overline{\rho}$ and $\rho_2=\overline{\rho}+\varepsilon$, for (small) $\varepsilon>0$, the stability estimate \eqref{StaE} reads as
$$
\sup_{t\in [0,T_0]} \| \uu(t)-\uu_H(t)\|_{(\H^1_\sigma)'}
+ \sup_{t\in [0,T_0]} \| \phi(t)-\phi_H(t) \|_{(H^1)'}
\leq  C \varepsilon.
$$
\end{remark}

Theorem \ref{stab} justifies the model H as the constant density approximation of the AGG model when the two viscous fluids have negligible densities difference. To make a comparison with \cite[Theorem 3.5]{G2021}, we notice that the estimate holds in dual Sobolev spaces. Indeed, the main idea is to write the momentum equation for the solutions difference $(\uu-\uu_H, \phi-\phi_H)$ as Navier-Stokes equations with constant density and exploit the uniqueness argument introduced in \cite{GMT2019}.
\medskip

\noindent
\textbf{Plan of the paper.} We report in Section \ref{Sec-2} the preliminaries for the analysis. Sections \ref{Loc-Ex} and \ref{Uni} are devoted to the proof of Theorem \ref{mr1}, in particular, the local existence of strong solutions and their uniqueness, respectively. In Section \ref{Sta} we prove the stability result contained in Theorem \ref{stab}. The Appendix \ref{App-0} is concerned with well-posedness results for the convective Viscous Cahn-Hilliard equation.

\section{Notation and Functional Spaces}
\label{Sec-2}
\setcounter{equation}{0}

Let $X$ be a real Banach space. Its norm is denoted by $\|\cdot\|_{X}$ and
the symbol $\langle\cdot, \cdot\rangle_{X',X}$ stands for the duality between $X$ and its dual space $X'$. 
We assume that $\Omega$ is a bounded domain in $\mathbb{R}^3$ with boundary $\partial \Omega$ of class $C^3$. For $p\in [1,\infty]$,
let $L^p(\Omega)$ denote the Lebesgue space with norm $\|\cdot\|_{L^p}$. The inner product in $L^2(\Omega)$ is denoted by $(\cdot, \cdot)$.
For $s \in \mathbb{N}$, $p \in [1,\infty]$, $W^{s,p}(\Omega)$
is the Sobolev space with norm $\|\cdot\|_{W^{s,p}}$. If $p=2$, we use the notation $W^{s,p}(\Omega)=H^s(\Omega)$. 
For every $f\in (H^1(\Omega))'$, we denote by $\overline{f}$ the generalized mean value over $\Omega$ defined by
$\overline{f}=|\Omega|^{-1}\l f,1\r$. If $f\in L^1(\Omega)$, then $\overline{f}=|\Omega|^{-1}\int_\Omega f \, \d x$.
By the generalized Poincar\'{e} inequality, there exists a positive constant $C$ such that 
\begin{equation}
\label{normH1-2}
\| f\|_{H^1}\leq C \big(\| \nabla f\|_{L^2}^2+ |\overline{f}|^2\big)^\frac12, \quad \forall \, f \in H^1(\Omega).
\end{equation}
We recall the Ladyzhenskaya, Agmon and Gagliardo-Nirenberg inequalities in three dimensions
\begin{align}
\label{LADY3}
&\| f\|_{L^3}\leq C \|f\|_{L^2}^{\frac12}\|f\|_{H^1}^{\frac12},  &&\forall \, f \in H^1(\Omega),\\
\label{Agmon3d}
&\| f\|_{L^\infty }\leq C \|f\|_{H^1}^{\frac12}\|f\|_{H^2}^{\frac12},  && \forall \, f \in H^2(\Omega),\\
\label{GN-L4}
&\| \nabla f\|_{L^4}\leq C\| f \|_{L^\infty}^\frac12 \| f \|_{H^2}^\frac12, && \forall \, f \in H^2(\Omega),\\
\label{GN-W6}
&\| f\|_{W^{1,4}}\leq C\| f \|_{H^1}^\frac58 \| f \|_{W^{2,6}}^\frac38, && \forall \, f \in W^{2,6}(\Omega).
\end{align}
Next, we introduce the Hilbert spaces of solenoidal vector-valued functions.
In the case of a bounded domain $\Omega \subset \mathbb{R}^3$, we define
\begin{align*}
&\LL^2_\sigma=\{ \uu\in \mathbf{L}^2(\Omega): \mathrm{div}\, \uu=0 \ \text{in } \Omega,\ \uu\cdot \n =0\ \text{on}\ \partial \Omega\},\\
& \H^1_\sigma =\{ \uu\in \mathbf{H}^1(\Omega): \mathrm{div}\, \uu=0 \ \text{in } \Omega,\ \uu=\mathbf{0}\  \text{on}\ \partial \Omega\}.
\end{align*}
We also use $( \cdot ,\cdot )$ and 
$\| \cdot \|_{L^2}$ for
the inner product and the norm in $\LL^2_\sigma$. The space $\H^1_\sigma$ is endowed with the inner product and norm
$( \uu,\vv )_{\H^1_\sigma}=
( \nabla \uu,\nabla \vv )$ and  $\|\uu\|_{\H^1_\sigma}=\| \nabla \uu\|_{L^2}$, respectively.
We report the Korn inequality
\begin{equation}
\label{KORN}
\|\nabla\uu\|_{L^2} \leq \sqrt2\|\D\uu\|_{L^2}, \quad \forall \, \uu \in \H^1_\sigma,
\end{equation}
which implies that $\| \D \uu\|_{L^2}$ is a norm on $\H^1_\sigma$ equivalent to $\| \uu\|_{\H^1_\sigma}$.
We introduce the space
$\H^2_\sigma= \mathbf{H}^2(\Omega)\cap \H^1_\sigma$
with inner product $( \uu,\vv)_{\H^2_\sigma}=( \A\uu, \A \vv )$ and norm $\| \uu\|_{\H^2_\sigma}=\|\A \uu \|_{L^2}$, where $\A=\mathbb{P}(-\Delta)$ is the Stokes operator and $\mathbb{P}$ is the Leray projection from $\mathbf{L}^2(\Omega)$ onto $\LL^2_\sigma$. We recall that there exists a positive constant $C>0$ such that
\begin{equation}
\label{H2equiv}
 \| \uu\|_{H^2}\leq C\| \uu\|_{\H^2_\sigma}, \quad \forall \, \uu\in \H^2_\sigma.
\end{equation}
We denote by $\A^{-1}: (\H^1_\sigma)' \rightarrow \H^1_\sigma$ the inverse map of the Stokes operator. That is,  given $\f \in (\H^!_\sigma)'$, there exists a unique $\uu=\A^{-1} \f \in \H^1_\sigma$ such that
$( \nabla \A^{-1} \f, \nabla \vv )=\l \f, \vv\r$, for all $\vv \in \H^1_\sigma$.
As a consequence, it follows that 
$
\| \f \|_{\sharp}:= \| \nabla \A^{-1} \f \|=\l \f,\A^{-1} \f \r^{\frac12}
$ 
is an equivalent norm on $(\H^1_\sigma)'$. 

Throughout this paper, we will use the symbol $C$ to denote a generic positive constant whose value may change from line to line. The specific value depends on the domain $\Omega$ and the parameters of the system, such as $\rho_\ast$, $\rho^\ast$, $\nu_\ast$, $\nu^\ast$, $\theta$ and $\theta_0$. Further dependencies will be  specified when necessary.

\section{Proof of Theorem \ref{mr1}. Part one: Existence of Solutions}
\label{Loc-Ex}
\setcounter{equation}{0}

In the sequel we will use the following notation
$$
\rho_\ast=\min \lbrace \rho_1,\rho_2\rbrace,
\quad \rho^\ast=\max \lbrace \rho_1,\rho_2\rbrace,
\quad \nu_\ast =\min \lbrace \nu_1,\nu_2 \rbrace,
\quad \nu^\ast =\max \lbrace \nu_1,\nu_2 \rbrace.
$$

\subsection{Approximation of the Initial Datum}
First of all, we approximate the initial concentration $\phi_0$ following the argument introduced in \cite{GMT2019}. For $k\in \mathbb{N}$, there exists a sequence of functions $(\phi_{0,k}, \widetilde{\mu}_{0,k})$ such that 
\begin{equation}
\label{ellapp}
\begin{cases}
-\Delta \phi_{0,k}+F'(\phi_{0,k})=\widetilde{\mu}_{0,k}\quad &\text{ in }\Omega,\\
\partial_\n \phi_{0,k}=0 \quad &\text{ on }\partial \Omega,
\end{cases}
\end{equation}
where $\widetilde{\mu}_{0,k}=h_k \circ \widetilde{\mu}_0$, $h_k$ is a cut-off function and  $\widetilde{\mu}_0=-\Delta \phi_0+ F'(\phi_0)$.
It follows that $\widetilde{\mu}_0\in H^1(\Omega)$, 
and
\begin{equation}
\label{mukH1}
\|\widetilde{\mu}_{0,k}\|_{H^1}\leq \| \widetilde{\mu}_0\|_{H^1}.
\end{equation}
There exists a unique solution $\phi_{0,k}$ to \eqref{ellapp} such that
$\phi_{0,k}\in H^2(\Omega)$, 
$F'(\phi_{0,k}) \in L^2(\Omega)$, 
which satisfies \eqref{ellapp} almost everywhere in $\Omega$ and
$\partial_\n \phi_{0,k}=0$ almost everywhere on $\partial \Omega$.
In addition, there exist $\widetilde{m}\in (0,1)$, which is independent of $k$, and $\overline{k}$ sufficiently large such that 
\begin{equation}
\label{psiH1}
\| \phi_{0,k}\|_{H^1} \leq 1+\| \phi_0\|_{H^1},
\quad
|\overline{\phi_{0,k}}|\leq \widetilde{m}<1, 
\quad 
\| \phi_{0,k}\|_{H^2}\leq C(1+\| \widetilde{\mu}_{0}\|),
\quad \forall \, k> \overline{k}.
\end{equation}
Furthermore, since
$$
\| F'(\phi_{0,k})\|_{L^{\infty}}
\leq \| \widetilde{\mu}_{0,k}\|_{L^{\infty}}\leq k.
$$
As a byproduct, there 
exists $\delta=\delta(k)>0$ such that
\begin{equation}
\label{psikinf}
\| \phi_{0,k}\|_{L^\infty}\leq 1-\delta.
\end{equation}
As a consequence, due to $F'(\phi_{0,k})\in H^1(\Omega)$, it is easily seen that $\phi_{0,k}\in H^3(\Omega)$. 
Finally, observing that $\widetilde{\mu}_{0,k}\rightarrow \widetilde{\mu_0}$ in $L^2(\Omega)$, it follows that
$\phi_{0,k} \rightarrow \phi_0$ in $H^1(\Omega)$. 

\subsection{Definition of the Approximate Problem}
Let us consider the family of eigenfunctions  $\lbrace \ww_j\rbrace_{j=1}^\infty$ and eigenvalues $\lbrace \lambda_j\rbrace_{j=1}^\infty$ of the Stokes operator $\A$. For any integer $m\geq 1$, let $\V_m$ denote the finite-dimensional subspaces of $\LL^2_\sigma$ defined as $\V_m= \text{span}\lbrace \ww_1,...,\ww_m\rbrace$. The finite-dimensional spaces $\V_m$ are endowed with the norm of $\LL^2_\sigma$.
The orthogonal projection on $\V_m$ with respect to the inner product in $\LL^2_\sigma$ is denoted by $\P_m$. Recalling that $\Omega$ is of class $C^3$, the regularity theory of the Stokes operator yields that $\ww_j \in \mathbf{H}^3(\Omega)\cap \H^1_\sigma$ for all $j\in \mathbb{N}$. As a consequence, the following inverse Sobolev embedding inequalities hold for all $\vv \in \V_m$
\begin{equation}
\label{Rev-SI}
\| \vv\|_{H^1}\leq C_m \| \vv\|_{L^2},\quad
\| \vv\|_{H^2}\leq C_m \| \vv\|_{L^2},\quad
\| \vv \|_{H^3}\leq C_m \| \vv\|_{L^2}.
\end{equation}

Let us set $T>0$. For any $k>0, \alpha \in (0,1)$ and $m \in \mathbb{N}$, we claim that there exists an approximate solution $(\uu_m, \phi_m)$ to the system \eqref{AGG} -\eqref{AGG-bc} in the following sense:
\begin{align}
\label{reg-AS}
\begin{split}
&\uu_m \in C^1([0,T];\V_m),\\
&\phi_m \in L^\infty(0,T;H^3(\Omega)), \
\partial_t \phi_m \in L^\infty(0,T;H^1(\Omega))\cap L^2(0,T;H^2(\Omega)),\\
&\phi_m \in L^\infty(\Omega\times (0,T)) : |\phi_m(x,t)| \leq 1-\delta \ \text{a.e. in } \  \Omega\times(0,T),\\
&\mu_m \in L^\infty(0,T;H^2(\Omega)) \cap W^{1,2}(0,T;L^2(\Omega)),
\end{split}
\end{align}
for some $\delta >0$, such that
\begin{align}
\label{NS-w}
\begin{split}
&\begin{aligned}[t]
(\rho(\phi_m) \partial_t \uu_m, \ww)&+(\rho(\phi_m)(\uu_m\cdot \nabla)\uu_m,\ww)+(\nu(\phi_m) \D \uu_m, \nabla \ww)\\
&-\frac{\rho_1-\rho_2}{2} ( (\nabla \mu_m \cdot \nabla) \uu_m, \ww)= (\mu_m \nabla \phi_m, \ww), 
\end{aligned}
\end{split}
\end{align}
for all $\ww \in \V_m$ and $t \in [0,T]$, 
\begin{equation}
\label{CH-w}
\partial_t \phi_m +\uu_m \cdot \nabla \phi_m = \Delta \mu_m, \quad
\mu_m= \alpha \partial_t \phi_m -\Delta \phi_m+\Psi'(\phi_m)
\quad \text{a.e. in } \ \Omega \times (0,T),
\end{equation}
together with
\begin{equation}
\label{bc-AS}
\begin{cases}
\uu_m=\mathbf{0}, \quad \partial_\n \phi_m=\partial_\n \mu_m=0 \quad &\text{on } \partial \Omega \times (0,T),\\
\uu_m(\cdot,0)=\P_m \uu_{0}, \ \phi(\cdot,0)=\phi_{0,k} \quad &\text{in } \Omega.
\end{cases}
\end{equation}

\subsection{Existence of Approximate Solutions}
We exploit a fixed point argument to show the existence of $(\uu_m, \phi_m)$ satisfying \eqref{reg-AS}-\eqref{bc-AS}.
For this purpose, we fix $\vv \in W^{1,2}(0,T;\V_m)$.
We consider the convective Viscous Cahn-Hilliard system
\begin{equation}
\label{cCH}
\begin{cases}
\partial_t \phi_m +\vv \cdot \nabla \phi_m = \Delta \mu_m\\
\mu_m= \alpha \partial_t \phi_m -\Delta \phi_m+F'(\phi_m)-\theta_0 \phi_m
\end{cases}
\quad \text{in }\ \Omega \times (0,T),
\end{equation}
which is equipped with the boundary and initial conditions
\begin{equation}
\label{cCH-c}
\partial_\n \phi_m=\partial_\n \mu_m=0 \quad \text{on} \ \partial \Omega \times (0,T), \quad \phi_m(\cdot, 0)=\phi_{0,k} \quad \text{in }\ \Omega.
\end{equation}
Thanks to Theorem \ref{r-vCH}, there exists a unique solution $\phi_m$ to \eqref{cCH}-\eqref{cCH-c} such that
\begin{equation}
\label{phi-As}
\begin{split}
&\phi_m \in L^\infty(0,T;H^3(\Omega)), \quad 
\partial_t \phi_m \in L^\infty(0,T; H^1(\Omega))\cap L^2(0,T;H^2(\Omega)),\\
&\phi_m \in L^\infty(\Omega\times (0,T)) : |\phi_m(x,t)|\leq1-\widetilde{\delta} \ \text{a.e. in } \  \Omega\times(0,T),\\
&\mu_m \in L^\infty(0,T;H^2(\Omega))\cap W^{1,2}(0,T;L^2(\Omega)),
\end{split}
\end{equation} 
for some $\widetilde{\delta}$ depending on $\alpha$ and $k$.
We report the following estimates for the system \eqref{cCH}-\eqref{cCH-c}:
\smallskip

\noindent
$[1.]$ $L^2$ estimate: for any $T>0$
\begin{equation*}
\begin{split}
\sup_{t \in [0,T]} \Big( \| \phi_m(t)\|_{L^2}^2 + \alpha \| \nabla \phi_m(t)\|_{L^2}^2 \Big)&+ \int_0^T \| \Delta \phi_m(\tau)\|_{L^2}^2 \, \d \tau  \leq \| \phi_{0,k}\|_{L^2}^2 + \alpha \| \nabla \phi_{0,k}\|_{L^2}^2+ \theta_0^2 |\Omega| T;
\end{split}
\end{equation*}

\noindent
$[2.]$ Energy estimate: for any $T>0$
\begin{equation}
\label{EE-AS}
\begin{split}
\sup_{t\in [0,T]} E_{\text{free}}(\phi(t))+ \frac12 \int_0^T \| \nabla \mu_m(\tau)\|_{L^2}^2 \, \d \tau &+\alpha \int_0^T \| \partial_t \phi_m (\tau)\|_{L^2}^2 \, \d \tau \\
&\leq  E_{\text{free}}(\phi_{0,k})+ \frac12 \int_0^T \| \vv(\tau)\|_{L^2}^2 \, \d \tau.
\end{split}
\end{equation}

We now make the ansatz
$$
\uu_m(x,t)= \sum_{j=1}^m a_j^m(t) \ww_j(x)
$$
as solution to the Galerkin approximation of \eqref{AGG}$_1$ that reads as
\begin{align}
\label{w-ap1}
\begin{split}
&\begin{aligned}[t]
(\rho(\phi_m) \partial_t \uu_m, \ww_l)&+(\rho(\phi_m)(\vv\cdot \nabla)\uu_m,\ww_l)+(\nu(\phi_m) \D \uu_m, \nabla \ww_l)\\
&-\frac{\rho_1-\rho_2}{2} ( (\nabla \mu_m \cdot \nabla) \uu_m, \ww_l)= (\mu_m \nabla \phi_m, \ww_l), \quad \forall \, l=1,\dots,m,
\end{aligned}
\end{split}
\end{align}
such that $\uu_m(\cdot, 0)= \mathbb{P}_m\uu_0$.
Setting $\mathbf{A}^m(t)=(a_1^m(t), \dots, a_m^m(t))^T$, \eqref{w-ap1} is equivalent to the system of differential equations
\begin{equation}
\label{AS-ODE}
\mathbf{M}^m(t) \ddt \mathbf{A}^m+ \mathbf{L}^m(t) \mathbf{A}^m = \mathbf{G}^m(t),
\end{equation}
where the matrices $\mathbf{M}^m(t)$, $\mathbf{L}^m(t)$ and the vector $\mathbf{G}^m(t)$ are defined as
\begin{align*}
&(\mathbf{M}^m(t))_{l,j}= \int_{\Omega} \rho(\phi_m) \ww_l \cdot \ww_j \, \d x,\\
&(\mathbf{L}^m(t))_{l,j}=\int_{\Omega} 
\Big( \rho(\phi_m) (\vv \cdot \nabla) \ww_j \cdot \ww_l + \nu(\phi_m) \D \ww_j : \nabla \ww_l - \Big(\frac{\rho_1-\rho_2}{2}\Big) (\nabla \mu_m \cdot \nabla) \ww_j \cdot \ww_l \Big) \, \d x,\\
&(\mathbf{G}^m(t))_l= \int_{\Omega} \mu_m \nabla \phi_m \cdot \ww_l \, \d x,
\end{align*}
and  
$
\mathbf{A}^m(0)=((\mathbb{P}_m\uu_{0}, \ww_1), \dots, (\mathbb{P}_m \uu_{0},\ww_m))^T.
$
The regularity properties \eqref{phi-As} imply the continuity of $\phi_m \in C([0,T]; W^{1,4}(\Omega))$ and $\mu_m\in C([0,T];H^1(\Omega))$. In turn, we have $\rho(\phi_m), \nu(\phi) \in C(\overline{\Omega \times [0,T]})$. Moreover, w observe that $\vv \in C([0,T]; \LL^2_\sigma)$. Thus, we infer that $\mathbf{M}^m$ and $\mathbf{L}^m$ belong to $C([0,T];\mathbb{R}^{m \times m})$, and $\mathbf{G}^m \in C([0,T];\mathbb{R}^m)$. Since the matrix $\mathbf{M}^m(\cdot)$ is definite positive on $[0,T]$ (see \cite[Appendix A]{GT2020}), the inverse $(\mathbf{M}^m)^{-1} \in C([0,T]; \mathbb{R}^{m\times m})$. Thus, the existence and uniqueness theorem for system of linear ODEs guarantees that there exists a unique solution
 $\mathbf{A}^m \in C^1([0,T];\mathbb{R}^m)$ to \eqref{AS-ODE} on $[0,T]$. As a result, the problem \eqref{w-ap1} has a unique solution $\uu_m \in C^1([0,T];\V_m)$.
\smallskip

Next, multiplying \eqref{w-ap1} by $a_l^m$ and summing over $l$, we find
\begin{align*}
\int_{\Omega} \rho(\phi_m) \partial_t \left( \frac{|\uu_m|^2}{2} \right) \, \d x&+ \int_{\Omega} \rho(\phi_m) \vv \cdot \nabla \left(  \frac{|\uu_m|^2}{2} \right) \, \d x + \int_{\Omega} \nu(\phi_m) |\D \uu_m|^2 \, \d x\\
&- \frac{\rho_1-\rho_2}{2} \int_{\Omega} \nabla \mu_m \cdot \nabla \left(  \frac{|\uu_m|^2}{2} \right) \, \d x = \int_{\Omega} \mu_m \nabla \phi_m \cdot \uu_m \, \d x. 
\end{align*}
Integrating by parts, we obtain
\begin{align*}
\ddt \int_{\Omega} \rho(\phi_m) \frac{|\uu_m|^2}{2}\, \d x &-
\int_{\Omega} \Big( \partial_t \rho(\phi_m)+ \div \big(\rho(\phi_m) \vv \big) \Big)  \frac{|\uu_m|^2}{2}\, \d x
+ \int_{\Omega} \nu(\phi_m) |\D \uu_m|^2 \, \d x\\
&+ \frac{\rho_1-\rho_2}{2} \int_{\Omega}\Delta \mu_m  \frac{|\uu_m|^2}{2} \, \d x = \int_{\Omega} \phi_m \nabla \mu_m \cdot \uu_m \, \d x. 
\end{align*}
Recalling that $\rho'(\phi_m)= \frac{\rho_1-\rho_2}{2}$ and $\div \vv=0$, by using \eqref{cCH}$_1$, we  have
\begin{align*}
&-\int_{\Omega} \Big( \partial_t \rho(\phi_m)+ \div \big(\rho(\phi_m) \vv \big) \Big)  \frac{|\uu_m|^2}{2}\, \d x+ \frac{\rho_1-\rho_2}{2} \int_{\Omega}\Delta \mu_m  \frac{|\uu_m|^2}{2} \, \d x = 0.
\end{align*}
Thus, we infer that
\begin{equation}
\label{EE-1p}
\ddt \int_{\Omega} \rho(\phi_m) \frac{|\uu_m|^2}{2}\, \d x 
+ \int_{\Omega} \nu(\phi_m) |\D \uu_m|^2 \, \d x\\
= \int_{\Omega}  \phi_m \nabla \mu_m \cdot \uu_m \, \d x. 
\end{equation}
By using \eqref{phi-As}$_2$ and the Poincar\'{e} inequality, we get
\begin{equation*}
\begin{split}
\int_{\Omega}  \phi_m \nabla \mu_m \cdot \uu_m \, \d x &\leq 
\| \phi_m\|_{L^\infty} \| \nabla \mu_m \|_{L^2} \| \uu_m \|_{L^2}
\leq \frac{\nu_\ast}{2}\| \D \uu_m\|_{L^2}^2+
\frac{1}{\lambda_1 \nu_\ast} \| \nabla \mu_m\|_{L^2}^2,
\end{split}
\end{equation*}
So, we find the differential inequality
\begin{equation}
\ddt \int_{\Omega} \rho(\phi_m) \frac{|\uu_m|^2}{2}\, \d x 
+ \frac{\nu_\ast}{2}\int_{\Omega} |\D \uu_m|^2 \, \d x\\
\leq \frac{1}{\lambda_1 \nu_\ast} \| \nabla \mu_m\|_{L^2}^2.
\end{equation}
Integrating the above inequality on $[0,s]$, with $s \in [0,T]$, and using \eqref{EE-AS}, it follows that
\begin{equation}
\begin{split}
 \int_{\Omega} \frac{\rho_\ast}{2} |\uu_m(s)|^2 \, \d x 
 \leq  \int_{\Omega} \rho(\phi_{0,k}) \frac{|\mathbb{P}_m \uu_0|^2}{2}\, \d x + \frac{2}{\lambda_1 \nu_\ast}
 E_{\text{free}}(\phi_{0,k})+ \frac{1}{\lambda_1 \nu_\ast} \int_0^s \| \vv(\tau)\|_{L^2}^2 \, \d \tau,
 \end{split}
\end{equation}
which, in turn, entails that
\begin{equation}
\label{u-s-L2}
\begin{split}
\|\uu_m(s)\|_{L^2}^2  
 \leq  \frac{\rho^\ast}{\rho_\ast} \| \uu_0\|_{L^2}^2 
 + \frac{4}{\lambda_1 \rho_\ast \nu_\ast} E_{\text{free}}(\phi_{0,k})+  \frac{2}{\lambda_1\rho_\ast \nu_\ast}\int_0^s \| \vv(\tau)\|_{L^2}^2 \, \d \tau.
 \end{split}
\end{equation}
At this point, setting
$$
C_1= \frac{\rho^\ast}{\rho_\ast} \| \uu_0\|_{L^2}^2 
 + \frac{4}{\lambda_1 \rho_\ast \nu_\ast} E_{\text{free}}(\phi_{0,k}), \quad 
 C_2= \frac{2}{\lambda_1\rho_\ast \nu_\ast},
$$
and assuming
\begin{equation}
\label{ass-v}
\int_0^t \| \vv(\tau)\|_{L^2}^2 \, \d \tau \leq C_3 \mathrm{e}^{C_2 t}, \quad t \in [0,T],
\end{equation}
where $C_3=C_1 T$,
we deduce that
\begin{equation}
\label{u-L2}
\begin{split}
\int_0^t \| \uu_m (s)\|_{L^2}^2 \, \d s 
&\leq C_3 + C_2 \int_0^t \int_0^s  \| \vv(\tau)\|_{L^2}^2 \, \d \tau \, \d s \leq C_3 \mathrm{e}^{C_2 t}, \quad \forall\, t \in [0,T].
\end{split}
\end{equation}
Furthermore, thanks to \eqref{u-s-L2} and \eqref{ass-v}, we also infer that
\begin{equation}
\label{u-sup-L2}
\sup_{t \in [0,T]} \| \uu_m (t)\|_{L^2} \leq  \left( C_1+C_3 C_2 \mathrm{e}^{C_2 T} \right)^\frac12=: K_0.
\end{equation}
Next, we control the time derivative of $\uu_m$. Multiplying \eqref{w-ap1} by $\ddt a_l^m$ and summing over $l$, we find
\begin{align*}
\rho_\ast \| \partial_t \uu_m \|_{L^2}^2 
&\leq - (\rho(\phi_m)(\vv\cdot \nabla)\uu_m,\partial_t \uu_m)-(\nu(\phi_m) \D \uu_m, \nabla \partial_t \uu_m)\\
&\quad +\frac{\rho_1-\rho_2}{2} ( (\nabla \mu_m \cdot \nabla) \uu_m, \partial_t \uu_m) +( \phi_m \nabla \mu_m, \partial_t \uu_m).
\end{align*}
By exploiting \eqref{Rev-SI}, we obtain
\begin{align*}
\rho_\ast \| \partial_t \uu_m \|_{L^2}^2 
&\leq \rho^\ast \| \vv \|_{L^2} \| \nabla\uu_m \|_{L^\infty} \| \partial_t \uu_m\|_{L^2} +
\nu^\ast \| \D \uu_m\|_{L^2} \| \nabla \partial_t \uu_m \|_{L^2}\\
&\quad +\Big| \frac{\rho_1-\rho_2}{2} \Big| \| \nabla \uu_m \|_{L^\infty}
\| \nabla \mu_m \|_{L^2} \| \partial_t \uu_m \|_{L^2} + \| \phi_m \|_{L^\infty} \| \nabla \mu_m\|_{L^2} \|\nabla \partial_t \uu_m\|_{L^2}\\
&\leq   \rho^\ast C  \| \vv\|_{L^2} \| \uu_m\|_{H^3} \| \partial_t \uu_m\|_{L^2} +  \nu^\ast C_m^2 \| \uu_m\|_{L^2} \| \partial_t \uu_m\|_{L^2} \\
&\quad + C \Big| \frac{\rho_1-\rho_2}{2} \Big| \| \uu_m\|_{H^3} \| \nabla \mu_m\|_{L^2}  \| \partial_t \uu_m \|_{L^2}+ C_m \| \nabla \mu_m\|_{L^2}  \| \partial_t \uu_m \|_{L^2}\\
&\leq   \rho^\ast C_m  \| \vv\|_{L^2} \| \uu_m\|_{L^2} \| \partial_t \uu_m\|_{L^2} +  \nu^\ast C_m^2 \| \uu_m\|_{L^2} \| \partial_t \uu_m\|_{L^2} \\
&\quad + C_m \Big| \frac{\rho_1-\rho_2}{2} \Big| \| \uu_m\|_{L^2)} \| \nabla \mu_m\|_{L^2}  \| \partial_t \uu_m \|_{L^2}
+ C_m \| \nabla \mu_m\|_{L^2}  \| \partial_t \uu_m \|_{L^2}.
\end{align*}
Then, by using \eqref{EE-AS}, \eqref{ass-v}, \eqref{u-L2} and \eqref{u-sup-L2}, we infer that
\begin{equation}
\label{utL2-AS}
\begin{split}
\int_0^T \| \partial_t \uu_m(\tau)\|_{L^2}^2 \, \d \tau
&\leq 4 \left( \frac{\rho^\ast}{\rho_\ast} C_m K_0 \right)^2 \int_0^T \| \vv(\tau)\|_{L^2}^2 \, \d \tau + 4 \left( \frac{\nu^\ast}{\rho_\ast} C_m^2  \right)^2 C_3 \mathrm{e}^{C_2 T} \\
&\quad + 4 \left( \left( \frac{C_m}{\rho_\ast} \Big| \frac{\rho_1-\rho_2}{2} \Big| K_0 \right)^2   +\frac{C_m^2}{\rho_\ast^2} \right)
\int_0^T \| \nabla \mu_m(\tau)\|_{L^2}^2 \, \d \tau  
\\
&\leq 4 \left( \left( \frac{\rho^\ast}{\rho_\ast} C_m K_0 \right)^2
+ \left( \frac{\nu_\ast}{\rho_\ast} C_m ^2 \right)^2 \right) C_3 \mathrm{e}^{C_2 T}\\
&\quad + 4 \left( \left( \frac{C_m}{\rho_\ast} \Big| \frac{\rho_1-\rho_2}{2} \Big| K_0 \right)^2 + \frac{C_m^2}{\rho_\ast^2} \right) \left( 2 E_{\text{free}}(\phi_{0,k})+ C_3 \mathrm{e}^{C_2 T}\right)=:K_1^2, 
\end{split}
\end{equation}
where $K_1$ depends only on $\rho_\ast$, $\rho^\ast$, $\nu_\ast$, $\theta_0$, $\|\uu_0 \|_{L^2}$, $E_{\text{free}}(\phi_0)$, $T$, $\Omega$, $m$.
\smallskip

Now we define the setting of the fixed point argument. We introduce the set
$$
S=\left\lbrace \uu \in W^{1,2}(0,T;\V_m): \int_0^t \| \uu(\tau)\|_{L^2}^2 \, \d \tau \leq C_3  \mathrm{e}^{C_2 t}, \, t \in [0,T], \ \| \partial_t \uu\|_{L^2(0,T;\V_m)}\leq K_1 \right\rbrace, 
$$
which is a subset of $L^2(0,T;\V_m)$. 
We define the map 
$$
\Lambda: S \rightarrow L^2(0,T;\V_m), \quad \Lambda(\vv)= \uu_m,
$$
where $\uu_m$ is the solution to the system \eqref{w-ap1}. In light of \eqref{u-L2} and \eqref{utL2-AS}, we deduce that
$\Lambda: S \rightarrow S$.
It is easily seen that $S$ is convex and closed. Furthermore, $S$ is a compact set in $L^2(0,T;\V_m)$. We are left to prove that the map $\Lambda$ is continuous. This is done by adapting the argument in \cite[Proof of Theorem 3.1]{G2021} to the viscous case. 
Let us consider a sequence $\lbrace \vv_n\rbrace\subset S$ such that $\vv_n \rightarrow \widetilde{\vv}$ in $L^2(0,T;\V_m)$. By arguing as above, there exists a sequence $\lbrace (\psi_n, \mu_n) \rbrace$ and $(\widetilde{\psi}, \widetilde{\mu})$ that solve the convective viscous Cahn-Hilliard equation \eqref{cCH}-\eqref{cCH-c}, where $\vv$ is replaced by $\vv_n$ and $\widetilde{\vv}$, respectively. 
Repeating the uniqueness argument in the proof of Theorem \ref{vCH}, we have
\begin{align*}
&\frac12 \ddt \bigg( \| \nabla A^{-1}(\psi_n -\widetilde{\psi})\|_{L^2}^2 + \alpha \| \psi_n-\widetilde{\psi}\|_{L^2}^2 \bigg) + \|\nabla (\psi_n-\widetilde{\psi})\|_{L^2}^2 \\
&\ \leq \int_{\Omega} \psi_n (\vv_n-\widetilde{\vv}) \cdot \nabla A^{-1} (\psi_n -\widetilde{\psi}) \, \d x + \int_{\Omega} (\psi_n-\widetilde{\psi}) \widetilde{\vv} \cdot \nabla A^{-1}  (\psi_n -\widetilde{\psi}) \, \d x+ \theta_0 \| \psi_n-\widetilde{\psi}\|_{L^2}^2,
\end{align*}
where the operator $A$ is the Laplace operator $-\Delta$ with homogeneous Neumann boundary conditions. Since $\widetilde{\vv}$ belong to $S$, we infer that 
\begin{align*}
\frac12 \ddt f(t) + \frac12 \|\nabla (\psi_n-\widetilde{\psi})\|_{L^2}^2  \leq C f(t) + \|\vv_n-\widetilde{\vv}\|_{L^2}^2,
\end{align*}
where $f(t)=\| \nabla A^{-1}(\psi_n(t) -\widetilde{\psi}(t))\|_{L^2}^2 + \alpha\| \psi_n(t)-\widetilde{\psi}(t)\|_{L^2}^2 $,
for some constant $C$ depending on $C_1, C_2, K_1$ and $\theta_0$. 
Observing that $\psi_n(0)-\widetilde{\psi}(0)=0$, by the Gronwall lemma we obtain
\begin{equation}
\label{wc1-AS}
\| \psi_n -\widetilde{\psi}\|_{L^\infty(0,T;L^2(\Omega))\cap L^2(0,T; H^1(\Omega))}\leq \mathrm{e}^{C T} \int_0^T \|\vv_n(\tau)-\widetilde{\vv}(\tau)\|_{L^2}^2 \, \d \tau \rightarrow 0, \quad \text{as } \ n \rightarrow \infty.
\end{equation}
On the other hand, using that $\lbrace \vv_n \rbrace $ and $\widetilde{\vv}$ belong to $S$, the continuous embedding $W^{1,2}(0,T;\V_m) \hookrightarrow Y_T$ (see Appendix A for the definition of $Y_T$) and the properties of the initial condition $\phi_{0,k}$ (cf. $\phi_{0,k}\in H^3(\Omega)$ and \eqref{psikinf}) it follows from Theorem \ref{vCH} that
\begin{align}
\label{4}
&\| \partial_t \psi_n\|_{L^\infty(0,T;H^1(\Omega))}
+ \| \partial_t \psi_n\|_{L^2(0,T;H^2(\Omega))}
\leq C, \\
\label{5}
&\| \partial_t \widetilde{\psi}\|_{L^\infty(0,T;H^1(\Omega))}
+ \| \partial_t \widetilde{\psi} \|_{L^2(0,T;H^2(\Omega))}
\leq C, 
\end{align}
for some $C$ independent of $n$.
Moreover, we also have
\begin{align}
\label{1}
&\| \mu_n\|_{L^\infty(0,T;H^2(\Omega))} + \| \psi_n\|_{L^\infty(0,T;H^3(\Omega))}\leq C,\\
\label{2}
 &\| \widetilde{\mu} \|_{L^\infty(0,T;H^2(\Omega))} + \| \widetilde{\psi}\|_{L^\infty(0,T;H^3(\Omega))}\leq C,\\
& \| \partial_t \mu_n\|_{L^2(0,T;L^2(\Omega))}\leq C, \quad \| \partial_t \widetilde{\mu}\|_{L^2(0,T;L^2(\Omega))}\leq C, 
\end{align}
and 
\begin{equation}
\label{3}
 \max_{(x,t)\in \Omega \times (0,T)} |\psi_n(x,t)|\leq 1-\delta^\ast, 
 \quad \max_{(x,t)\in \Omega \times (0,T)} |\widetilde{\psi}(x,t)|\leq 1-\delta^\ast,
\end{equation}
for some positive $C$ and $\delta^\ast \in (0,1)$, which are independent of $n$. In light of the above estimates, we first observe that $\mu_n - \widetilde{\mu} \rightarrow \mu^\ast $ in $L^\infty(0,T;L^2(\Omega))$. Our goal is to show that $\mu^\ast=0$. To this aim, we use the equation 
$$
\mu_n-\widetilde{\mu}= \varepsilon\partial_t (\psi_n-\widetilde{\psi}) 
- \Delta (\psi_n-\widetilde{\psi})+ \Psi'(\psi_n)-\Psi'(\widetilde{\psi}).
$$
By standard interpolation, we deduce from 
\eqref{wc1-AS}, \eqref{1} and \eqref{2} that 
\begin{equation}
\label{H2L-AS}
\| \psi_n -\widetilde{\psi}\|_{L^\infty(0,T;H^2(\Omega))} \rightarrow 0, \quad \text{as } \ n \rightarrow \infty.
\end{equation}
As a consequence, thanks to \eqref{3}, 
$\| \Psi'(\psi_n) - \Psi'(\widetilde{\psi})\|_{L^\infty(0,T;L^2(\Omega))} \rightarrow 0$, as $ n \rightarrow \infty$.
On the other hand, it follows from \eqref{wc1-AS}, \eqref{4} and \eqref{5} that
$\partial_t (\psi_n-\widetilde{\psi}) \rightharpoonup 0$ weakly in $ L^2(0,T;H^2(\Omega))$.
Thus, by uniqueness of the weak limit, we can conclude that
\begin{equation}
\label{MUL-AS}
\| \mu_n -\widetilde{\mu}\|_{L^\infty(0,T;L^2(\Omega))} \rightarrow 0, \quad \text{as } \ n \rightarrow \infty.
\end{equation}
We now define $\uu_n = \Lambda( \vv_n) \in S$, for any $n \in \mathbb{N}$, and $\widetilde{\uu}=\Lambda(\widetilde{\vv})\in S$. 
We consider $ \uu= \uu_n-\widetilde{\uu}$, $ \psi= \psi_n-\widetilde{\psi}$, $ \vv=\vv_n - \widetilde{\vv}$, and $ \mu= \mu_n-\widetilde{\mu}$ that solve
\begin{align}
\label{w-diff}
\begin{split}
&(\rho(\psi_n) \partial_t \uu, \ww)+ ((\rho(\psi_n)-\rho(\widetilde{\psi})) \partial_t \widetilde{\uu}, \ww)
+(\rho(\psi_n)(\vv_n\cdot \nabla)\uu_n - \rho(\widetilde{\psi})
(\widetilde{\vv}\cdot \nabla) \widetilde{\uu},\ww)\\
&\qquad +(\nu(\psi_n) \D  \uu, \nabla \ww)
+ ((\nu(\psi_n)-\nu(\widetilde{\psi})) \D \widetilde{\uu}, \nabla \ww)\\
&\qquad-\frac{\rho_1-\rho_2}{2} ( (\nabla \mu_n \cdot \nabla) \uu_n- 
(\nabla \widetilde{\mu}\cdot \nabla) \widetilde{\uu}, \ww)= (\mu_n \nabla \psi_n- \widetilde{\mu} \nabla \widetilde{\psi}, \ww), 
\end{split}
\end{align}
for all $\ww \in \V_m$, for all $t \in [0,T]$. Taking $\ww=\uu$, we obtain
\begin{align*}
&\frac12 \ddt \int_{\Omega} \rho(\psi_n) |\uu|^2 \, \d x
+\int_{\Omega} \nu(\psi_n)| \D \uu|^2 \, \d x \\
&\quad =
 \frac{\rho_1-\rho_2}{4} \int_{\Omega}  \partial_t \psi_n |\uu|^2 \, \d x-  \frac{\rho_1-\rho_2}{2} \int_{\Omega}  \psi  (\partial_t \widetilde{\uu} \cdot \uu) \, \d x \\
&\qquad - \int_{\Omega} \big(\rho(\psi_n)(\vv_n\cdot \nabla)\uu_n - \rho(\widetilde{\psi})
(\widetilde{\vv}\cdot \nabla) \widetilde{\uu} \big) \cdot \uu \, \d x- \frac{\nu_1-\nu_2}{2} \int_{\Omega}  \psi (\D \widetilde{\uu} : \D \uu) \, \d x\\
&\qquad+  \frac{\rho_1-\rho_2}{2} \int_{\Omega}
\big( (\nabla \mu_n \cdot \nabla) \uu_n - (\nabla \widetilde{\mu} \cdot \nabla) \widetilde{\uu} \big) \cdot \uu \, \d x  +\int_\Omega \big( \mu_n \nabla \psi_n- \widetilde{\mu} \nabla \widetilde{\psi} \big) \cdot \uu \, \d x.
\end{align*}
Thanks to \eqref{KORN} and \eqref{4}, we have 
\begin{align*}
 \frac{\rho_1-\rho_2}{4}  \int_{\Omega} \partial_t \psi_n |\uu|^2 \, \d x
&\leq C \| \partial_t \psi_n\|_{L^6} \| \uu\|_{L^2} \| \uu \|_{L^3}
 \leq \frac{\nu_\ast}{10} \| \D \uu\|_{L^2}^2 + C \| \uu\|_{L^2}^2,
\end{align*}
and
\begin{align*}
-\frac{\rho_1-\rho_2}{2} \int_{\Omega} \psi  (\partial_t \widetilde{\uu} \cdot \uu) \, \d x 
&\leq C \| \psi\|_{L^\infty} \| \partial_t \widetilde{\uu}\|_{L^2} \| \uu \|_{L^2}
\leq C\| \uu\|_{L^2}^2+ C  \| \partial_t \widetilde{\uu}\|_{L^2}^2 \| \psi\|_{H^2}^2.
\end{align*}
Noticing that $\vv_n$, $\widetilde{\vv}, \uu_n \in S$, by exploiting \eqref{KORN} and \eqref{Rev-SI}, we find
\begin{align*}
&-\int_{\Omega} \big(\rho(\psi_n)(\vv_n\cdot \nabla)\uu_n - \rho(\widetilde{\psi}) (\widetilde{\vv}\cdot \nabla) \widetilde{\uu} \big) \cdot \uu \, \d x\\
&\quad = - \frac{\rho_1-\rho_2}{2} \int_{\Omega} \psi ((\vv_n\cdot \nabla) \uu_n) \cdot \uu \,\ d x- \int_{\Omega} \rho(\widetilde{\psi})((\vv \cdot \nabla) \uu_n) \cdot \uu \, \d x - \int_{\Omega} \rho(\widetilde{\psi}) ((\widetilde{\vv}\cdot \nabla) \uu) \cdot \uu \, \d x \\
&\quad \leq C \| \psi\|_{L^\infty} \| \vv_n\|_{L^\infty}  \| \nabla \uu_n\|_{L^2} \| \uu\|_{L^2}  + C \| \vv\|_{L^2} \| \nabla \uu_n\|_{L^\infty} \| \uu\|_{L^2}
+ C \| \widetilde{\vv}\|_{L^\infty} \| \nabla \uu \|_{L^2} \| \uu \|_{L^2}\\
&\quad \leq C_m \| \psi\|_{H^2} \| \uu\|_{L^2}+ C_m \| \vv\|_{L^2} 
\| \uu\|_{L^2}+ C \| \nabla \uu\|_{L^2} \| \uu\|_{L^2}\\
&\quad \leq   \frac{\nu_\ast}{10} \| \D \uu\|_{L^2}^2 + C_m \| \uu\|_{L^2}^2
+ C_m \| \psi\|_{H^2}^2 + C_m \| \vv\|_{L^2}^2.
\end{align*}
In addition, we deduce that
\begin{align*}
- \frac{\nu_1-\nu_2}{2} \int_{\Omega} \psi (\D \widetilde{\uu} : \D \uu) \, \d x & \leq C \| \psi\|_{L^\infty} \| \D \widetilde{\uu}\|_{L^2} \| \D \uu \|_{L^2}
\leq  \frac{\nu_\ast}{10} \| \D \uu\|_{L^2}^2 + C_m \| \psi\|_{H^2}^2,
\end{align*}
and
\begin{align*}
\frac{\rho_1-\rho_2}{2}& \int_{\Omega}
\left( (\nabla \mu_n \cdot \nabla) \uu_n - (\nabla \widetilde{\mu} \cdot \nabla) \widetilde{\uu} \right) \cdot \uu \, \d x \\
&=- \frac{\rho_1-\rho_2}{2} \int_{\Omega} (\mu_n \Delta \uu_n-\widetilde{\mu} \Delta \widetilde{\uu} ) \cdot \uu \, \d x
-\frac{\rho_1-\rho_2}{2} \int_{\Omega} (\mu_n \nabla \uu_n-\widetilde{\mu} \nabla \widetilde{\uu}): \nabla \uu \, \d x\\
&= -\frac{\rho_1-\rho_2}{2} \int_{\Omega} (\mu \Delta \uu_n+\widetilde{\mu} \Delta \uu ) \cdot \uu \, \d x
-\frac{\rho_1-\rho_2}{2} \int_{\Omega} (\mu \nabla \uu_n+\widetilde{\mu} \nabla \uu): \nabla \uu \, \d x\\
&\leq C \| \mu\|_{L^2} \| \Delta \uu_n\|_{L^2} \| \uu\|_{L^\infty}+
C \| \widetilde{\mu}\|_{L^6} \| \Delta \uu\|_{L^2} \| \uu\|_{L^3} \\
&\quad + C \| \mu\|_{L^2} \| \nabla \uu_n\|_{L^6} \| \nabla \uu\|_{L^3} + C \| \widetilde{\mu}\|_{L^6} \| \nabla \uu\|_{L^6} \| \nabla \uu\|_{L^3}\\
& \leq C_m \| \mu\|_{L^2} \| \nabla \uu\|_{L^2} + C_m \| \nabla \uu \|_{L^2} \| \uu\|_{L^2}\\
&\leq  \frac{\nu_\ast}{10} \| \D \uu\|_{L^2}^2 + C_m \| \mu\|_{L^2}^2+ C_m \| \uu \|_{L^2}^2.
\end{align*}
Finally, by \eqref{1}-\eqref{2}, we have
\begin{align*}
\int_\Omega \big( \mu_n \nabla \psi_n- \widetilde{\mu} \nabla \widetilde{\psi} \big) \cdot \uu \, \d x
&\leq \left( \| \mu \|_{L^2} \| \nabla \psi_n\|_{L^6}
+ \| \widetilde{\mu}\|_{L^2}  \| \nabla \psi\|_{L^6}  \right) \| \uu\|_{L^3}\\
&\leq C \left( \| \mu\|_{L^2}+ \| \psi\|_{H^2} \right) \| \nabla \uu\|_{L^2}\\
&\leq  \frac{\nu_\ast}{10} \| \D \uu\|_{L^2}^2 
+ C \| \mu\|_{L^2}^2 + C \| \psi \|_{H^2}^2.
\end{align*}
Combining the above inequalities, we are led to the differential inequality
\begin{align*}
& \ddt \int_{\Omega} \rho(\psi_n) |\uu|^2 \, \d x
\leq h_1(t) \int_{\Omega} \rho(\psi_n) |\uu|^2 \, \d x + h_2(t),
\end{align*}
where
$$
h_1(t)= C_m \big(1+ \| \partial_t \psi_n(t)\|_{H^1}^2\big)
$$
and 
$$
h_2(t)= C_m\big( \| \partial_t \widetilde{\uu}(t)\|_{L^2}^2 \| \psi(t)\|_{H^2}^2+ \| \psi(t)\|_{H^2}^2 + \| \vv (t)\|_{L^2}^2+ \| \mu(t)\|_{L^2}^2\big).
$$
Thus, the Gronwall lemma entails
$$
\sup_{t \in [0,T]} \| \uu(t)\|_{L^2}^2 \leq \frac{1}{\rho_\ast} \mathrm{e}^{\int_0^T h_1(\tau) \d \tau} \int_0^T h_2(\tau) \, \d \tau.
$$
On account of \eqref{4}, \eqref{H2L-AS}, \eqref{MUL-AS}, and the convergence $\vv_n \rightarrow \widetilde{\vv}$ in $L^2(0,T;\V_m)$, we deduce that $\uu_n \rightarrow \widetilde{\uu}$ in $L^\infty(0,T;\V_m)$, implying that the map $\Lambda$ is continuous.
Finally, we are in the position to apply the Schauder fixed point theorem and conclude that the map $\Lambda$ has a fixed point in $S$, which gives the existence of the approximate solution $(\uu_m, \phi_m)$ on $[0,T]$  satisfying \eqref{reg-AS}-\eqref{bc-AS} for any $m \in \mathbb{N}$.

\subsection{Uniform estimates independent of the approximation parameters}
First, integrating \eqref{CH-w}$_1$ over $\Omega$
\begin{equation}
\label{cons-mass}
\int_{\Omega} \phi_m(t) \, \d x= \int_{\Omega} \phi_{0,k} \, \d x, \quad \forall \, t \in [0,T].
\end{equation}
Owing to \eqref{psiH1}, for $k > \overline{k}$, $|\overline{\phi_m}(t)|\leq \widetilde{m}<1$ for all $t\in [0,T]$.
Taking $\ww=\uu_m$ in \eqref{NS-w} and integrating by parts, we have (cf. \eqref{EE-1p})
\begin{equation}
\label{NS-u}
\ddt \int_{\Omega} \frac12 \rho(\phi_m) |\uu_m|^2 \, \d x 
 + \int_{\Omega} \nu(\phi_m)|\D \uu_m|^2 \, \d x= 
 \int_{\Omega} \mu_m \nabla \phi_m \cdot \uu_m \, \d x. 
\end{equation}
Multiplying \eqref{cCH} by $\mu_m$, integrating over $\Omega$ and exploiting the definition of $\mu_m$, we find
\begin{equation}
\label{CH-mu}
\ddt \left( \int_{\Omega} \frac12 |\nabla \phi_m|^2 + \Psi(\phi_m) \, \d x\right)
+ \int_{\Omega} |\nabla \mu_m|^2 +\alpha  |\partial_t \phi_m|^2 \, \d x + \int_{\Omega} \uu_m \cdot \nabla \phi_m \mu_m \, \d x=0.
\end{equation}
By summing \eqref{NS-u} and \eqref{CH-mu}, we reach
\begin{equation}
\label{EE-m}
\ddt E(\uu_m, \phi_m)
 + \int_{\Omega} \nu(\phi_m)| \D \uu_m|^2 \, \d x
 + \int_{\Omega} |\nabla \mu_m|^2 \, \d x=0.
\end{equation}
An integration in time on $[0,t]$, with $0<t\leq T$, yields
$$
E(\uu_m(t), \phi_m(t))+ \int_0^t \int_{\Omega} \nu(\phi_m)| \D \uu_m|^2 \, \d x + \int_0^t \int_{\Omega} |\nabla \mu_m|^2 \, \d x= E(\mathbb{P}_m\uu_0,\phi_{0,k}), \quad \forall \, t \in [0,T].
$$
Thanks to \eqref{psiH1} and \eqref{psikinf}, we observe that
\begin{align*}
E(\mathbb{P}_m\uu_0,\phi_{0,k}) 
&\leq \frac{\rho^\ast}{2} \| \uu_0\|_{L^2}^2+ \frac12 \| \phi_0\|_{H^1}^2+ \theta_0 \left( 1+ |\Omega| \max_{s\in [-1,1]} \left| \Psi(s) \right| \right).
\end{align*}
Since $\phi_m \in L^\infty(\Omega\times (0,T)):
|\phi_m(x,t)|<1$ almost everywhere in $\Omega\times(0,T)$, we obtain
\begin{align}
\label{uL2H1}
&\| \uu_m\|_{L^\infty(0,T;\LL^2_\sigma)} + \| \uu_m\|_{L^2(0,T;\H^1_\sigma)}\leq C,\\
\label{phiH1}
&\| \phi_m \|_{L^\infty(0,T;H^1(\Omega))}\leq C, \\
\label{nmuL2}
& \| \nabla \mu_m\|_{L^2(0,T;\L2)}\leq C,\\
\label{alpha-phit}
& \sqrt{\alpha}\| \partial_t \phi_m \|_{L^2(0,T;\L2)}\leq C,
\end{align}
where the constant $C$ depends on $\| \uu_0\|_{L^2}$ and $\| \phi_0\|_{H^1}$, but is independent of $m$, $\alpha$ and $k$. 
Multiplying \eqref{cCH} by $-\Delta \phi_m$, integrating over $\Omega$ and using \eqref{phi-As}, we get
$$
\| \Delta \phi_m \|_{L^2}^2 + \int_{\Omega} F''(\phi_m) |\nabla \phi_m|^2 \, \d x= \alpha \int_{\Omega} \partial_t \phi_m \Delta \phi_m \, \d x+ \int_{\Omega} \nabla \mu_m \cdot \nabla \phi_m \, \d x+ \theta_0 \|\nabla \phi_m\|_{L^2}^2.
$$
Since $F''(s)>0$ for $s\in (-1,1)$, by using \eqref{phiH1}, we have
\begin{equation}
\label{phiH2e}
\| \Delta \phi_m \|_{L^2}^2  \leq C \left( 1+ \| \nabla \mu_m\|_{L^2}^2+ \alpha^2 \| \partial_t \phi_m\|_{L^2}^2\right),
\end{equation}
for some $C$ independent of $m$. Then, it follows from \eqref{nmuL2} and \eqref{alpha-phit} that
\begin{equation}
\label{phiH2}
\| \phi_m\|_{L^2(0,T;H^2(\Omega))}\leq C.
\end{equation}
We now recall the well-known inequality (see \cite{MZ})
\begin{equation}
\label{F'-L1}
\int_{\Omega} |F'(\phi_m)| \, \d x \leq C \int_{\Omega} F'(\phi_m) (\phi_m -\overline{\phi_{0,k}}) \, \d x+ C,
\end{equation}
where the constant $C$ depends only on $\overline{\phi_{0,k}}$, thereby it is independent of $k$ (for $k$ large).
Then, multiplying \eqref{CH-w}$_2$ by $\phi_m - \overline{\phi_{0,k}}$ (cf. \eqref{cons-mass}), we find
\begin{align*}
\int_{\Omega} |\nabla \phi_m|^2 \, \d x&+ 
\int_{\Omega} F'(\phi_m) (\phi_m -\overline{\phi_{0,k}}) \, \d x \\
&= - \alpha \int_{\Omega} \partial_t \phi_m \left(\phi_m-\overline{\phi_{0,k}}\right)\, \d x+ \int_{\Omega} (\mu-\overline{\mu}) \phi_m \, \d x + \theta_0 \int_{\Omega} \phi_m (\phi_m -\overline{\phi_{0,k}}) \, \d x.
\end{align*}
By the Poincar\'{e} inequality and \eqref{phiH1}, we obtain
\begin{equation}
\label{F'-L1e}
\left| \int_{\Omega} F'(\phi_m) (\phi_m -\overline{\phi_{0,k}}) \, \d x \right| \leq 
C \left(1+ \| \nabla \mu_m\|_{L^2} + \alpha \| \partial_t \phi_m\|_{L^2}\right).
\end{equation}
Since $\overline{\mu_m}= \overline{F'(\phi_m)}- \theta_0 \overline{\phi_{0,k}}$, we infer from \eqref{F'-L1} and \eqref{F'-L1e} that
$$
|\overline{\mu_m}|\leq C\left(1+ \| \nabla \mu_m\|_{L^2} + \alpha \| \partial_t \phi_m\|_{L^2}\right).
$$ 
Thanks to \eqref{normH1-2}, we have
\begin{equation}
\label{mu-H1e}
\| \mu_m\|_{H^1}\leq 
C \left( 1+ \| \nabla \mu_m\|_{L^2}+ \alpha \| \partial_t \phi_m\|_{L^2} \right).
\end{equation}
As a direct consequence, we deduce that
 \begin{equation}
 \label{mu-H1}
 \| \mu_m\|_{L^2(0,T;H^1(\Omega))}\leq C,
 \end{equation}
for some constant $C$ independent of $m$, $\alpha$ and $k$. 
In addition, using the boundary conditions \eqref{bc-AS} and \eqref{uL2H1}, we find
\begin{equation}
\label{phit}
\| \partial_t \phi_m\|_{(H^1)'}\leq C \left (1+ \|\nabla \mu_m \|_{L^2} \right),
\end{equation}
which, in turn, implies that
$$
\| \partial_t \phi_m\|_{L^2(0,T;(H^1(\Omega))')}\leq C.
$$

Next, taking $\ww=\partial_t \uu_m$ in \eqref{NS-w}, we find
\begin{equation}
\label{High1}
\begin{split}
&\frac12 \ddt \int_{\Omega} \nu(\phi_m) | \D \uu_m|^2 \, \d x+ \int_{\Omega} \rho(\phi_m) |\partial_t \uu_m|^2 \, \d x\\
&\quad =- \int_{\Omega} \rho(\phi_m)  ((\uu_m \cdot \nabla) \uu_m ) \cdot \partial_t \uu_m \, \d x +\frac{\nu_1-\nu_2}{2} \int_{\Omega}  \partial_t \phi _m |\D \uu_m|^2 \, \d x\\
&\qquad + \frac{\rho_1-\rho_2}{2}\int_{\Omega} ((\nabla \mu_m \cdot \nabla) \uu_m ) \cdot \partial_t \uu_m \, \d x + \int_{\Omega}  \mu_m \nabla \phi_m \cdot \partial_t \uu_m \, \d x.
\end{split}
\end{equation}
Thanks to the regularity of $\mu$ (cf. \eqref{phi-As}), we multiply \eqref{CH-w}$_1$ by $\partial_t \mu_m$ and integrate over $\Omega$
$$
\frac12 \ddt \int_{\Omega} |\nabla \mu_m|^2 \, \d x + ( \partial_t \mu_m, \partial_t \phi_m ) + (\partial_t \mu_m, \uu_m \cdot \nabla \phi_m )=0.
$$
Direct computations give that
\begin{align*}
( \partial_t \mu_m, \partial_t \phi_m )= \alpha (\partial_{tt} \phi_m, \partial_t \phi_m)+ \| \nabla \partial_t \phi_m\|_{L^2}^2+
\int_{\Omega} F''(\phi_m) |\partial_t \phi_m|^2 \,\d x - \theta_0 \| \partial_t \phi_m\|_{L^2}^2
\end{align*}
and
$$
( \partial_t \mu_m, \uu_m \cdot \nabla \phi_m )=
\ddt \left( \int_{\Omega} \mu_m \uu_m \cdot \nabla \phi_m\, \d x \right)
- \int_{\Omega}  \mu_m \partial_t \uu_m \cdot \nabla \phi_m \, \d x
-\int_{\Omega} \mu_m \uu_m \cdot \nabla \partial_t \phi_m \, \d x.
$$
As a result, we  find
\begin{equation}
\label{High2}
\begin{split}
&\ddt \bigg( \int_{\Omega} \frac12 |\nabla \mu_m|^2 \, \d x + \int_{\Omega}
\frac{\alpha}{2} |\partial_t \phi_m|^2 \, \d x
+ \int_{\Omega} \mu_m \uu_m \cdot \nabla \phi_m \, \d x \bigg) + \| \nabla \partial_t \phi_m\|_{L^2}^2\\
&\quad  \leq \theta_0 \| \partial_t \phi_m \|_{L^2}^2 + \int_{\Omega}  \mu_m \partial_t \uu_m \cdot \nabla \phi_m \, \d x
+ \int_{\Omega} \mu_m \uu_m \cdot \nabla \partial_t \phi_m \, \d x.
\end{split}
\end{equation}
By summing \eqref{High1} and \eqref{High2}, we arrive at
\begin{equation}
\label{High3}
\begin{split}
\ddt H_m &+ \rho_\ast \|\partial_t \uu_m\|_{L^2}^2
+ \| \nabla \partial_t \phi_m\|_{L^2}^2\\
&\quad \leq - \int_{\Omega} \rho(\phi_m) ( (\uu_m \cdot \nabla) \uu_m ) \cdot \partial_t \uu_m \, \d x +  \frac{\nu_1-\nu_2}{2} \int_{\Omega}\partial_t \phi _m |\D \uu_m|^2 \, \d x\\
&\qquad + \frac{\rho_1-\rho_2}{2} \int_{\Omega} ((\nabla \mu_m \cdot \nabla) \uu_m ) \cdot \partial_t \uu_m \, \d x  + 2 \int_{\Omega} \mu_m \nabla \phi_m \cdot \partial_t \uu_m \, \d x\\
&\qquad  +\theta_0 \| \partial_t \phi_m \|_{L^2}^2 
+ \int_{\Omega} \mu_m \uu_m \cdot \nabla \partial_t \phi_m \, \d x\\
&\quad = \sum_{k=1}^6 R_i,
\end{split}
\end{equation}
where
$$
H_m(t)= \frac12 \int_{\Omega} \nu(\phi_m) |\D \uu_m|^2 \, \d x +
\frac12 \int_{\Omega} |\nabla \mu_m|^2 \, d x 
+ \frac{\alpha}{2} \int_{\Omega} |\partial_t \phi_m|^2 \, \d x 
+ \int_{\Omega} \mu_m \uu_m \cdot \nabla \phi_m \, \d x.
$$
By exploiting \eqref{LADY3}, \eqref{KORN}, \eqref{uL2H1}, \eqref{phiH1}, and \eqref{mu-H1e}, we observe that
\begin{align*}
\left| \int_{\Omega} \mu_m \uu_m \cdot \nabla \phi_m \, \d x \right|
&\leq \| \mu_m\|_{L^6} \| \uu_m\|_{L^3} \| \nabla \phi_m \|_{L^2}\\
&\leq C \left( 1+ \| \nabla \mu_m\|_{L^2} + \alpha \| \partial_t \phi_m\|_{L^2} \right) \|\nabla \uu_m \|_{L^2}^\frac12\\
&\leq \frac{1}{4} \int_{\Omega}\nu(\phi_m) | \D \uu_m|^2 \, \d x + \frac14 \|\nabla \mu_m\|_{L^2}^2 + \frac{\alpha}{4} \| \partial_t \phi_m\|_{L^2}^2+C_0,
\end{align*}
for some $C_0$ independent of $m$, $\alpha$ and $k$. Thus, it follows that
\begin{equation}
\label{H-b}
H_m \geq  \frac{1}{4} \int_{\Omega}\nu(\phi_m) | \D \uu_m|^2 \, \d x + \frac14 \|\nabla \mu_m\|_{L^2}^2 + \frac{\alpha}{4} \| \partial_t \phi_m\|_{L^2}^2  -C_0.
\end{equation}
Similarly, it is easily seen that
\begin{equation}
\label{H-ab}
H_m \leq  \int_{\Omega}\nu(\phi_m) | \D \uu_m|^2 \, \d x + \|\nabla \mu_m\|_{L^2}^2 + \alpha\| \partial_t \phi_m\|_{L^2}^2  +\widetilde{C}_0, 
\end{equation}
for some $\widetilde{C}_0$ independent of $m$, $\alpha$ and $k$.
Before proceeding with the estimate of the terms $R_i$, $i=1, \dots, 7$, we need to control the norms $\| \A \uu_m\|_{L^2}$ and $\| \mu_m\|_{H^3}$. To this aim, taking $\ww=\A \uu_m$ in \eqref{w-ap1}, we have
\begin{equation}
\label{A-H2}
\begin{split}
-\frac12 (\nu(\phi_m) \Delta \uu_m,  \A \uu_m)&=
-(\rho(\phi_m) \partial_t \uu_m, \A\uu_m)
-(\rho(\phi_m)(\uu_m\cdot \nabla)\uu_m,\A \uu_m)\\
&\quad+\frac{\rho_1-\rho_2}{2} ( (\nabla \mu_m \cdot\nabla) \uu_m, \A \uu_m)
+( \mu_m \nabla \phi_m,  \A \uu_m)\\
&\quad +\frac{\nu_1-\nu_2}{2}( \D \uu_m \nabla \phi_m, \A \uu_m).
\end{split}
\end{equation}
By arguing as in \cite{GMT2019} (see also \cite{G2021}), there exists $\pi_m \in C([0,T];H^1(\Omega))$ such that $-\Delta \uu_m + \nabla \pi_m= \A \uu_m$ almost everywhere in $\Omega \times (0,T)$ and satisfies
\begin{equation}
\label{p-est}
\| \pi_m\|_{L^2}\leq C\| \nabla \uu_m\|_{L^2}^\frac12 \| \A \uu_m\|_{L^2}^\frac12,\quad \| \pi_m\|_{H^1}\leq C \| \A \uu_m\|_{L^2},
\end{equation}
where $C$ is independent of $m$, $\alpha$ and $k$. Therefore, we obtain
\begin{equation}
\label{High4}
\begin{split}
\frac12 (\nu(\phi_m) \A \uu_m,  \A \uu_m)&=
-(\rho(\phi_m) \partial_t \uu_m, \A\uu_m)
-(\rho(\phi_m)(\uu_m\cdot \nabla)\uu_m,\A \uu_m)\\
&\quad+\frac{\rho_1-\rho_2}{2} ( (\nabla \mu_m \cdot \nabla) \uu_m, \A \uu_m)+(\mu_m \nabla \phi_m,  \A \uu_m)\\
&\quad +\frac{\nu_1-\nu_2}{2}( \D \uu_m \nabla \phi_m, \A \uu_m)
-\frac{\nu_1-\nu_2}{4} (\pi_m \nabla \phi_m, \A \uu_m)\\
&= \sum_{i=7}^{12} R_i.
\end{split}
\end{equation}
On the other hand, taking the gradient of \eqref{CH-w}$_1$, multiplying it by $\nabla \Delta \mu$ and integrating over $\Omega$, we find
\begin{equation}
\label{mu-H3}
\begin{split}
\|\nabla \Delta \mu_m\|_{L^2}^2
&= (\nabla \partial_t \phi_m, \nabla \Delta \mu_m)+
( \nabla (\uu_m \cdot \nabla \phi_m), \nabla \Delta \mu_m).
\end{split}
\end{equation}
Then, in light of \eqref{CH-w}$_1$ and \eqref{bc-AS}$_1$, it follows that
$$
\| \mu_m\|_{H^3}^2\leq C \left( \| \mu_m\|_{H^1}^2+ \| \nabla \Delta \mu_m\|_{L^2}^2 \right),
$$
which, in turn, by \eqref{H-b} gives that
\begin{equation}
\label{mu-H3-f}
\begin{split}
\| \mu_m\|_{H^3}^2&\leq C \Big( 1+ \| \nabla \mu_m\|_{L^2}^2+ \alpha^2 \| \partial_t \phi_m \|_{L^2}^2  + (\nabla \partial_t \phi_m, \nabla \Delta \mu_m)+
( \nabla (\uu_m \cdot \nabla \phi_m), \nabla \Delta \mu_m) \Big)\\
&= C \left(1+ C_0 +H_m \right) +\sum_{i=13}^{14} R_i,
\end{split}
\end{equation}
where $C$ is independent of $m$, $\alpha$ and $k$.
Now, multiplying \eqref{High4} and \eqref{mu-H3-f} by two positive constants $\varpi_1$ and $\varpi_2$ (which will be chosen later on), respectively, and summing them to \eqref{High3}, we obtain
\begin{equation}
\label{High5}
\begin{split}
\ddt H_m &+ \rho_\ast \|\partial_t \uu_m\|_{L^2}^2
+ \| \nabla \partial_t \phi_m\|_{L^2}^2
+ \frac{\nu_\ast \varpi_1}{2} \| \A \uu_m\|_{L^2}^2
+ \varpi_2 \| \mu_m\|_{H^3}^2
\\
&\quad \leq C(1+\varpi_2) \left(1+ C_0+H_m \right)+ \sum_{i=1}^{6} R_i+ \varpi_1 \sum_{i=7}^{12} R_i
+ \varpi_2 \sum_{i=13}^{14} R_i.
\end{split}
\end{equation}
Let us proceed with the estimate of the terms $R_i$, $i=1,\dots, 14$. In the sequel the generic constant $C$ may depend on $\varpi_1$ and $\varpi_2$.
Exploiting \eqref{LADY3}, \eqref{KORN}, \eqref{uL2H1} and \eqref{H-b}, we  have
\begin{equation*}
\label{I1}
\begin{split}
\left| - \int_{\Omega} \rho(\phi_m) ( (\uu_m \cdot \nabla) \uu_m ) \cdot \partial_t \uu_m \, \d x \right| 
&\leq \rho^\ast \| \uu_m \|_{L^6} \| \nabla \uu_m\|_{L^3} \| \partial_t \uu_m\|_{L^2}\\
& \leq \frac{\rho_\ast}{8} \| \partial_t \uu_m\|_{L^2}^2+
C\| \nabla \uu_m \|_{L^2}^3 \| \A \uu_m\|_{L^2}\\
&\leq  \frac{\rho_\ast}{8} \| \partial_t \uu_m\|_{L^2}^2
+\frac{\nu_\ast \varpi_1 }{32} \| \A \uu_m\|_{L^2}^2+
C\| \D \uu_m \|_{L^2}^6\\
&\leq  \frac{\rho_\ast}{8} \| \partial_t \uu_m\|_{L^2}^2
+\frac{\nu_\ast \varpi_1 }{32} \| \A \uu_m\|_{L^2}^2+
C \left( C_0+ H_m \right)^3.
\end{split}
\end{equation*}
By Sobolev embedding, \eqref{LADY3} and \eqref{H-b}, we obtain
\begin{equation*}
\label{I2}
\begin{split}
\left| \frac{\nu_1-\nu_2}{2} \int_{\Omega}\partial_t \phi _m |\D \uu_m|^2 \, \d x \right| 
&\leq C \| \partial_t \phi_m\|_{L^6} \| \D \uu_m  \|_{L^3} \| \D \uu_m  \|_{L^2} \\
&\leq \frac{1}{8} \| \nabla \partial_t \phi_m\|_{L^2}^2 
+C  \| \A \uu_m \|_{L^2} \| \D \uu_m \|_{L^2}^3\\
&\leq \frac{1}{8} \| \nabla \partial_t \phi_m\|_{L^2}^2 + \frac{\nu_\ast \varpi_1}{32} \| \A \uu_m\|_{L^2}^2 + 
C \| \D \uu_m \|_{L^2}^3\\
&\leq \frac{1}{8} \| \nabla \partial_t \phi_m\|_{L^2}^2 + \frac{\nu_\ast \varpi_1}{32} \| \A \uu_m\|_{L^2}^2 + C \left( C_0+ H_m \right)^3.
\end{split}
\end{equation*}
By Sobolev interpolation, \eqref{Agmon3d} and \eqref{mu-H1e}, we get
\begin{equation*}
\label{I3}
\begin{split}
\left| \frac{\rho_1-\rho_2}{2} \int_{\Omega} ((\nabla \mu_m \cdot \nabla) \uu_m ) \cdot \partial_t \uu_m \, \d x \right| 
&\leq C  \| \nabla \mu_m\|_{L^\infty} 
\| \nabla \uu_m \|_{L^2} \| \partial_t \uu_m \|_{L^2}\\
& \leq C  \| \nabla \mu_m\|_{H^1}^\frac12 \| \mu_m \|_{H^3}^\frac12 \| \nabla \uu_m \|_{L^2} \| \partial_t \uu_m\|_{L^2}\\
&\leq \frac{\rho_\ast}{8} \| \partial_t \uu_m\|_{L^2}^2 
+C \| \nabla \mu_m\|_{L^2}^\frac12 \| \mu_m\|_{H^3}^\frac32 \| \D \uu_m \|_{L^2}^2 \\
&\leq \frac{\rho_\ast}{8} \| \partial_t \uu_m\|_{L^2}^2 
+ \frac{\varpi_2}{6} \| \mu_m\|_{H^3}^2
+ C \| \nabla \mu_m\|_{L^2}^2 \| \D \uu_m\|_{L^2}^8\\
&\leq \frac{\rho_\ast}{8} \| \partial_t \uu_m\|_{L^2}^2 
+ \frac{\varpi_2}{6} \| \mu_m\|_{H^3}^2
+ C \left( C_0+ H_m \right)^5.
\end{split}
\end{equation*}
Exploiting \eqref{phiH2e}, \eqref{mu-H1e},\eqref{phit} and \eqref{H-b}, we find
\begin{equation*}
\label{I4}
\begin{split}
\left| 2 \int_{\Omega} \mu_m \nabla \phi_m \cdot \partial_t \uu_m \, \d x \right| 
&\leq 2 \| \mu_m\|_{L^6} \| \nabla \phi_m \|_{L^3}
\| \partial_t \uu_m\|_{L^2}\\
&\leq \frac{\rho_\ast}{8} \| \partial_t \uu_m\|_{L^2}^2 +
C \| \phi_m\|_{H^2}^2 \| \mu_m\|_{H^1}^2 \\
&\leq \frac{\rho_\ast}{8} \| \partial_t \uu_m\|_{L^2}^2 +
C  \left( 1+ \| \nabla \mu_m\|_{L^2}^2+ \alpha^2 \| \partial_t \phi_m\|_{L^2}^2\right)^2\\
&\leq \frac{\rho_\ast}{8} \| \partial_t \uu_m\|_{L^2}^2 +
C  \left( 1 + C_0 + H_m \right)^2,
\end{split}
\end{equation*}
\begin{equation*}
\label{I5}
\begin{split}
\theta_0 \| \partial_t \phi_m \|_{L^2}^2
&\leq C \| \partial_t \phi_m\|_{(H^1)'} \| \nabla \partial_t \phi_m \|_{L^2}\\
&\leq \frac18 \|\nabla \partial_t \phi_m \|_{L^2}^2
 +C   \left( 1 + C_0 + H_m \right),
\end{split}
\end{equation*}
and
\begin{equation*}
\label{I6}
\begin{split}
\left|  \int_{\Omega} \mu_m \uu_m \cdot \nabla \partial_t \phi_m \, \d x\right| 
&\leq \| \mu_m \|_{L^6} \|  \uu_m\|_{L^3} \| \nabla \partial_t \phi_m\|_{L^2}\\
&\leq \frac18 \| \nabla \partial_t \phi_m\|_{L^2}^2 +C \| \D \uu_m\|_{L^2}^2 
\left(1+\|\nabla \mu_m\|_{L^2}^2+ \alpha^2 \| \partial_t \phi_m\|_{L^2}^2\right)\\
&\leq \frac18 \|\nabla \partial_t \phi_m \|_{L^2}^2
 +C   \left( 1 + C_0 + H_m \right)^2.
\end{split}
\end{equation*}
By Young's inequality, we have
\begin{equation*}
\label{I7}
\begin{split}
\left|-\int_{\Omega} \rho(\phi_m) \partial_t \uu_m \cdot \A\uu_m \, \d x \right|
&\leq \varpi_1 \rho^\ast \| \partial_t \uu_m\|_{L^2} \| \A \uu_m\|_{L^2}\\
&\leq \frac{\rho_\ast}{8 \varpi_1} \| \partial_t \uu_m\|_{L^2}^2
+\frac{2\left(\rho^\ast \right)^2 \varpi_1}{\rho_\ast} \| \A \uu_m\|_{L^2}^2.
\end{split}
\end{equation*}
By using \eqref{LADY3}, \eqref{Agmon3d}, \eqref{KORN} and \eqref{H-b}, we find
\begin{equation*}
\label{I8}
\begin{split}
 \left| - \int_{\Omega} \rho(\phi_m)(\uu_m\cdot \nabla)\uu_m \cdot \A \uu_m \, \d x \right|
&\leq  \rho^\ast \| \uu_m\|_{L^6} \| \nabla \uu_m\|_{L^3}
\| \A \uu_m\|_{L^2}\\
&\leq C \| \D \uu_m\|_{L^2}^\frac32 \| \A \uu_m \|_{L^2}^\frac32\\
&\leq \frac{\nu_\ast}{32} \| \A \uu_m\|_{L^2}^2 +
C \| \D \uu_m\|_{L^2}^6\\
&\leq \frac{\nu_\ast}{32} \| \A \uu_m\|_{L^2}^2 +
C\left( C_0+H_m \right)^3,
\end{split}
\end{equation*}
and
\begin{equation*}
\label{I9}
\begin{split}
\left| \frac{\rho_1-\rho_2}{2} \int_{\Omega} (\nabla \mu_m \cdot \nabla) \uu_m \cdot \A \uu_m \, \d x \right|
&\leq  C  \|\nabla \mu_m\|_{L^\infty} 
\| \nabla \uu_m\|_{L^2} \| \A \uu_m \|_{L^2}\\
&\leq C \| \nabla \mu_m \|_{H^1}^\frac12 \| \mu_m\|_{H^3}^\frac12
 \| \nabla \uu_m\|_{L^2} \| \A \uu_m\|_{L^2} \\
&\leq \frac{\nu_\ast }{32} \| \A \uu_m\|_{L^2}^2  
+ C \| \nabla \mu_m\|_{L^2}^\frac12 
\| \mu_m\|_{H^3}^\frac32  \| \D \uu_m\|_{L^2}^2\\
&\leq  \frac{\nu_\ast }{32} \| \A \uu_m\|_{L^2}^2  
+\frac{\varpi_2}{6 \varpi_1} \| \mu_m\|_{H^3}^2
+ C\| \nabla \mu_m\|_{L^2}^2  \| \D \uu_m\|_{L^2}^8\\
&\leq  \frac{\nu_\ast }{32} \| \A \uu_m\|_{L^2}^2  
+\frac{\varpi_2}{6 \varpi_1} \| \mu_m\|_{H^3}^2
+ C \left( C_0 +H_m \right)^5.
\end{split}
\end{equation*}
In light of \eqref{phiH2e} and \eqref{mu-H1e}, we have
\begin{equation*}
\label{I10}
\begin{split}
 \left| \int_{\Omega} \mu_m \nabla \phi_m \cdot  \A \uu_m \, \d x\right|
&\leq \| \mu_m\|_{L^6} \| \nabla \phi_m \|_{L^3} \| \A \uu_m\|_{L^2}\\
&\leq \frac{\nu_\ast }{32} \| \A \uu_m\|_{L^2}^2  +
C \| \mu_m\|_{H^1}^2 \| \phi_m\|_{H^2}^2\\
&\leq \frac{\nu_\ast }{32} \| \A \uu_m\|_{L^2}^2  +
 C  \left( 1+ \| \nabla \mu_m\|_{L^2}^2+ \alpha^2 \| \partial_t \phi_m\|_{L^2}^2\right)^2 \\
 &\leq \frac{\nu_\ast}{32} \| \A \uu_m\|_{L^2}^2  +
 C  \left( 1+ C_0 +H_m \right)^2,
 \end{split}
\end{equation*}
and
\begin{equation*}
\label{I11}
\begin{split}
 \left| \frac{\nu_1-\nu_2}{2} \int_{\Omega} \D \uu_m \nabla \phi_m \cdot \A \uu_m \, \d x
\right|
&\leq C \| \D \uu_m\|_{L^3} \| \nabla \phi_m\|_{L^6} 
\| \A \uu_m\|_{L^2}\\
&\leq C \| \D \uu_m \|_{L^2}^\frac12 \| \A \uu_m\|_{L^2}^\frac32 \| \phi_m\|_{H^2}\\
&\leq  \frac{\nu_\ast }{32} \| \A \uu_m\|_{L^2}^2 
+C  \| D \uu_m\|_{L^2}^2 \| \phi_m \|_{H^2}^4 \\
& \leq  \frac{\nu_\ast }{32} \| \A \uu_m\|_{L^2}^2  
+ C \left(1+ C_0 +H_m \right)^3.
\end{split}
\end{equation*}
Owing to \eqref{phiH2e} and \eqref{p-est}, we obtain
\begin{equation*}
\label{I12}
\begin{split}
\left| \frac{\nu_1-\nu_2}{4} \int_{\Omega} \pi_m \nabla \phi_m\cdot \A \uu_m\, \d x \right|
&\leq 
C \| \pi_m\|_{L^3} \| \nabla \phi_m \|_{L^6} \| \A \uu_m \|_{L^2}\\
&\leq C \| \pi_m \|_{L^2}^\frac12 \| \pi_m\|_{H^1}^\frac12 \| \phi_m\|_{H^2} \| \A \uu_m \|_{L^2}\\
&\leq C \| \D \uu_m\|_{L^2}^\frac14 \| \A \uu_m\|_{L^2}^\frac74 \left( 1+ \| \nabla \mu_m\|_{L^2}^2+ \alpha^2 \| \partial_t \phi_m\|_{L^2}^2\right)^\frac12 \\
&\leq  \frac{\nu_\ast }{32} \| \A \uu_m\|_{L^2}^2 
+C \| D \uu_m\|_{L^2}^2 \left( 1+ \| \nabla \mu_m\|_{L^2}^2+ \alpha^2 \| \partial_t \phi_m\|_{L^2}^2\right)^4\\
&\leq  \frac{\nu_\ast }{32} \| \A \uu_m\|_{L^2}^2 
+ C \left( 1+C_0+ H_m \right)^5.
\end{split}
\end{equation*}
By using the Young inequality, it easily follows that
\begin{equation*}
\label{I13}
\begin{split}
\left|  \int_{\Omega} \nabla \partial_t \phi_m \cdot \nabla \Delta \mu_m\, \d x
\right|
\leq \frac{1}{8\varpi_2} \| \nabla \partial_t \phi_m\|_{L^2}^2
+ 2 \varpi_2 \| \mu_m\|_{H^3}^2.
\end{split}
\end{equation*}
Finally, by exploiting \eqref{LADY3}, \eqref{Agmon3d}, \eqref{KORN}, \eqref{phiH2e} and \eqref{H-b}, we infer that
\begin{equation*}
\label{I14}
\begin{split}
\left| \int_{\Omega} \nabla (\uu_m \cdot \nabla \phi_m) \cdot \nabla \Delta \mu_m \, \d x \right|
&\leq C \left( \| \D \uu_m \|_{L^3} \| \nabla \phi_m\|_{L^6} + \| \nabla ^2 \phi_m\|_{L^2} \| \uu_m\|_{L^\infty} \right) \|\nabla \Delta \mu_m\|_{L^2} \\
&\leq C \| \D \uu_m \|_{L^2}^\frac12 \| \A \uu_m \|_{L^2}^\frac12 \| \phi_m\|_{H^2} \| \mu_m\|_{H^3}\\
&\leq \frac{\nu_\ast \varpi_1}{32 \varpi_2} \| \A \uu_m\|_{L^2}^2  
+\frac{1}{6} \| \mu_m\|_{H^3}^2
+ C \| \D \uu_m \|_{L^2}^2 \| \phi_m\|_{H^2}^4\\
&\leq \frac{\nu_\ast \varpi_1}{32 \varpi_2} \| \A \uu_m\|_{L^2}^2  
+\frac{1}{6} \| \mu_m\|_{H^3}^2
+ C\left( 1+C_0+ H_m \right)^3.
\end{split}
\end{equation*}
Combining \eqref{High5} with the above estimates, we arrive at
\begin{equation}
\label{High6}
\begin{split}
\ddt H_m &+ \frac{\rho_\ast}{2} \|\partial_t \uu_m\|_{L^2}^2
+ \frac12 \| \nabla \partial_t \phi_m\|_{L^2}^2 + \left( \frac{\nu_\ast \varpi_1}{4} - \frac{2 \left(\rho^\ast\right)^2 \varpi_1^2}{\rho_\ast} \right) \| \A \uu_m\|_{L^2}^2\\
&+ \left( \frac{\varpi_2}{2}- 2 \varpi_2^2\right) \| \mu_m\|_{H^3}^2
 \leq C \left(1+ C_0+H_m \right)^5,
\end{split}
\end{equation}
where the positive constant $C$depends on $\varpi_1$ and $\varpi_2$, but is independent of $m$, $\alpha$ and $k$. 
Therefore, by setting
$$
\varpi_1= \frac{\rho_\ast \nu_\ast}{16 (\rho^\ast)^2}, \quad \varpi_2= \frac18, 
$$
we deduce the differential inequality 
\begin{equation}
\label{High7}
\ddt H_m
+F_m
\leq C (1+C_0+H_m)^5,
\end{equation}
where
$$
F_m(t)= \frac{\rho_\ast}{2} \|\partial_t \uu_m(t)\|_{L^2}^2
+ \frac12 \| \nabla \partial_t \phi_m(t)\|_{L^2}^2
+ \frac{\varpi_1 \nu_\ast}{8}\| \A \uu_m(t)\|_{L^2}^2
+ \frac{1}{32} \| \mu_m (t)\|_{H^3}^2,
$$
and the constant $C$ is independent of the approximation parameters $\alpha$, $m$ and $k$. Hence, whenever $\widetilde{T}>0$ satisfies
$$
1-4C\widetilde{T}(1+C_0+H_m(0))^4>0,
$$
we infer that
\begin{equation}
C_0+ H_m(t) \leq 
\frac{1+C_0+H_m(0)}{\left( 1-4C t \left(C_1+H_m(0) \right)^4 \right)^\frac14}, \quad \forall \, t \in [0,\widetilde{T}].
\end{equation}
To deduce an estimate of $H_m$ which is independent of $m$, $\alpha$ and $k$, we are left to control $\alpha \| \partial_t \phi_m (0)\|_{L^2}^2$ (cf. definition of $H_m$  and \eqref{H-ab}). To this aim, we first observe that 
$\partial_t \phi_m \in C([0,T];H^1(\Omega))$, $ \mu _m \in C([0,T];H^1(\Omega))$ due to the regularity in Theorem \ref{vCH}. 
By comparison in \eqref{CH-w}$_2$, it follows that $-\Delta \phi_m +\Psi'(\phi_m) \in C([0,T];H^1(\Omega))$. Now, multiplying \eqref{CH-w}$_2$ by $\partial_t \phi_m$ and integrating over $\Omega$, we have
$$
\alpha \| \partial_t \phi_m\|_{L^2}^2+ \left( -\Delta \phi_m +\Psi'(\phi_m), \partial_t \phi_m \right) = (\mu_m, \partial_t \phi_m).
$$
By using \eqref{CH-w}$_1$, we find
$$
\alpha \| \partial_t \phi_m\|_{L^2}^2
+ ( -\Delta \phi_m +\Psi'(\phi_m), \Delta \mu_m - \uu_m \cdot \nabla \phi_m ) 
= (\mu_m, \Delta \mu_m -\uu_m \cdot \nabla \phi_m ).
$$
Integrating by parts, we arrive at
$$
\alpha \| \partial_t \phi_m\|_{L^2}^2
+\| \nabla \mu_m\|_{L^2}^2
= \left( \nabla (-\Delta \phi_m +\Psi'(\phi_m)), \nabla \mu_m - \phi_m \uu_m \right)+
(\nabla \mu_m, \phi_m \uu_m ).
$$
By continuity, we obtain
\begin{align*}
\alpha \| \partial_t \phi_m(0)\|_{L^2}^2
&+\| \nabla \mu_m(0) \|_{L^2}^2\\
&= \left( \nabla (-\Delta \phi_{0,k} +\Psi'(\phi_{0,k})), \nabla \mu_m(0) - \phi_{0,k} \, \uu_m(0) \right)+
(\nabla \mu_m(0), \phi_{0,k} \, \uu_m(0) ),
\end{align*}
which, in turn, implies that 
\begin{equation}
\alpha \| \partial_t \phi_m(0)\|_{L^2}^2
+ \| \nabla \mu_m(0) \|_{L^2}^2
\leq C \| \nabla (-\Delta \phi_{0,k} +\Psi'(\phi_{0,k}))\|_{L^2}^2+
C \| \uu_m(0)\|_{L^2}^2.
\end{equation}
Thus, we conclude from \eqref{ellapp},  \eqref{mukH1}, \eqref{psiH1} and \eqref{H-ab} that
$$
H_m(0)\leq C\left(1+ \| \uu_0\|_{\H^1_\sigma}^2 + \| -\Delta \phi_0 +F'(\phi_0) \|_{H^1}^2 + \| \phi_0\|_{H^1}^2 \right) + \widetilde{C}_0
:= \widetilde{K}_0,
$$
where the constant $C$ is independent of $m$, $\alpha$ and $k$.
Therefore, setting 
$
\widetilde{T}_0=\frac{1}{4C(C_1+ \widetilde{K}_0) )^4},
$
it yields  that
$$
0\leq C_0 + H_m(t) \leq \frac{1+C_0+\widetilde{K}_0}{\left(1- 4 C t \left( C_1+\widetilde{K}_0\right)^4\right)^\frac14}, \quad \forall \, t \in [0,\widetilde{T}_0).
$$
Notice that $\widetilde{T}_0$ is independent of $m$, $\alpha$ and $k$. 
Let us now fix $T_0 \in (0,\widetilde{T}_0)$.
Thanks to \eqref{H-b}, we infer that
\begin{equation}
\label{HE1}
\sup_{t\in [0,T_0]} \| \nabla \uu_m(t)\|_{L^2} + \sup_{t\in[0,T_0]} \| \nabla \mu_m(t)\|_{L^2}
+ \sup_{t\in[0,T_0]} \sqrt{\alpha} \| \partial_t \phi_m(t)\|_{L^2}
\leq K_1,
\end{equation}
where $K_1$ is a positive constant that depends on $E(\uu_0,\phi_0)$, $\| \uu_0\|_{\H^1_\sigma}$, $\| \mu_0\|_{H^1}$, and the parameters of the system, but is independent of $m$, $\alpha$ and $k$.
Recalling \eqref{phiH2e} and \eqref{mu-H1e}, we immediately obtain 
\begin{equation}
\label{HE2}
\sup_{t\in [0,T_0]} \| \phi_m(t)\|_{H^2} +
\sup_{t\in[0,T_0]} \|  \mu_m(t)\|_{H^1}
+ \sup_{t\in[0,T_0]} \| F'(\phi_m(t))\|_{L^2}
\leq K_{2}.
\end{equation}
Integrating \eqref{High5} on $[0,T_0]$, we deduce that
\begin{equation}
\label{HE4}
\int_0^{T_0} 
\|\partial_t \uu_m(\tau)\|_{L^2}^2
+\| \nabla \partial_t \phi_m(\tau)\|_{L^2}^2
+\| \A \uu_m(\tau)\|_{L^2}^2
+\| \mu_m (\tau)\|_{H^3}^2 \, \d \tau \leq K_3.
\end{equation}
Finally, in light of the regularity properties \eqref{HE1} and \eqref{HE4} of the velocity, we observe that the separation property \eqref{phi-As}$_2$ (cf. Theorem \ref{r-vCH}) only depends on $\alpha$ and $k$, but it independent of $m$, namely
\begin{equation}
\label{sp-m}
\phi_m \in L^\infty(\Omega\times (0,T)) : |\phi_m(x,t)|\leq1- \widetilde{\delta} \ \text{a.e. in } \  \Omega\times(0,T_0)
\end{equation}
for some $\widetilde{\delta}=\widetilde{\delta}(\alpha, k)$.

\subsection{Passage to the Limit and Existence of Strong Solutions}
Thanks to the above estimates \eqref{HE1}-\eqref{HE4}, we deduce the following convergences (up to a subsequence) as $m\rightarrow \infty$
\begin{equation}
\label{wl-SS}
\begin{split}
\begin{aligned}
&\uu_m \rightharpoonup \uu_\alpha \quad &&\text{weak-star in } L^\infty(0,T_0;\H^1_\sigma),\\
&\uu_m \rightharpoonup \uu_\alpha \quad  &&\text{weakly in } L^2(0,T_0;H^2)\cap W^{1,2}(0,T_0;\LL^2_\sigma),\\
&\phi_m \rightharpoonup \phi_\alpha \quad  &&\text{weak-star in } L^\infty(0,T_0;H^2(\Omega)),\\
&\phi_m \rightharpoonup \phi_\alpha \quad  &&\text{weakly in } 
W^{1,2}(0,T_0;H^1(\Omega)),\\
&\mu_m  \rightharpoonup \mu_\alpha \quad  &&\text{weak-star in } L^\infty(0,T_0;H^1(\Omega)),\\
&\mu_m  \rightharpoonup \mu_\alpha \quad  &&\text{weakly in } L^2(0,T_0;H^3(\Omega)).
\end{aligned}
\end{split}
\end{equation}
The strong convergences of $\uu_m$ and $\phi_m$ are recovered through the Aubin-Lions lemma, which implies that
\begin{equation}
\label{sl-SS}
\begin{split}
\begin{aligned}
&\uu_m \rightarrow \uu_\alpha \quad &&\text{strongly in } L^2(0,T_0;\H^1_\sigma),\\
&\phi_m \rightarrow \phi_\alpha \quad  &&\text{strongly in } C([0,T_0];W^{1,p}(\Omega)), \ \forall \, p \in [2,6).
\end{aligned}
\end{split}
\end{equation}
As a consequence, we infer that
\begin{equation}
\label{sl-SS2}
\begin{split}
\begin{aligned}
&\rho(\phi_m) \rightarrow \rho(\phi_\alpha),\quad  \nu(\phi_m) \rightarrow \nu(\phi_\alpha)\quad  &&\text{strongly in } C([0,T_0];W^{1,p}(\Omega)),
\end{aligned}
\end{split}
\end{equation}
for all $p \in [2,6)$. Additionally, we have
\begin{equation}
\label{sp-alpha}
\phi_\alpha \in L^\infty(\Omega\times (0,T)) : |\phi_\alpha(x,t)|\leq1-\delta \ \text{a.e. in } \  \Omega\times(0,T_0)
\end{equation}
for some $\delta=\delta(\alpha, k)$. The above properties entail the convergence of the nonlinear terms in \eqref{NS-w} and of the logarithmic potential $\Psi'(\phi)$ in \eqref{CH-w}, thereby we pass to the limit in the Galerkin formulation as $m\rightarrow \infty$ in \eqref{NS-w}-\eqref{CH-w}. The limit solution $(\uu_\alpha,\phi_\alpha)$ satisfies
\begin{equation}
\label{NS-alpha}
\begin{split}
\left ( \rho(\phi_\alpha) \partial_t \uu_\alpha, \ww \right) &+ 
\left( \rho(\phi_\alpha)(\uu_\alpha\cdot \nabla)\uu_\alpha,\ww\right)
- \left( \div( \nu(\phi_\alpha) \D \uu_\alpha),\ww \right) \\
&-\left(\rho'(\phi_\alpha) (\nabla \mu_\alpha \cdot \nabla) \uu_\alpha,\ww\right)
-\left( \mu_\alpha \nabla \phi_\alpha, \ww \right)=0, 
\end{split}
\end{equation}
for all $\ww \in \LL^2_\sigma$, $t \in [0,T_0]$, and  
\begin{equation}
\label{CH-alpha}
\partial_t \phi_\alpha +\uu_\alpha \cdot \nabla \phi_\alpha = \Delta \mu_\alpha,\quad 
\mu_\alpha= \alpha \partial_t \phi_\alpha -\Delta \phi_\alpha+\Psi'(\phi_\alpha)
\quad \text{a.e. in } \ \Omega \times (0,T_0).  
\end{equation}
Moreover, we have
\begin{equation}
\label{bc-alpha}
\begin{cases}
\uu_\alpha=\mathbf{0}, \quad \partial_\n \phi_\alpha=\partial_\n \mu_\alpha=0 \quad &\text{a.e. on } \partial \Omega \times (0,T),\\
\uu_\alpha(\cdot,0)= \uu_{0}, \ \phi(\cdot,0)=\phi_{0,k} \quad &\text{in } \Omega.
\end{cases}
\end{equation}
Next, we proceed with the vanishing viscosity limit in the Cahn-Hilliard equation. Thanks to the lower semicontinuity of the norm, we obtain from \eqref{HE1}-\eqref{HE4} that
\begin{equation}
\label{HE1-alpha}
\esssup_{t\in (0,T_0)} \| \nabla \uu_\alpha(t)\|_{L^2} 
+ \esssup_{t\in (0,T_0)} \| \mu_\alpha(t)\|_{H^1}
+ \esssup_{t\in (0,T_0)} \sqrt{\alpha} \| \partial_t \phi_\alpha(t)\|_{L^2}
\leq K_1,
\end{equation}
\begin{equation}
\label{HE2-alpha}
\esssup_{t\in [0,T_0]} \| \phi_\alpha(t)\|_{H^2} 
+ \esssup_{t\in[0,T_0]} \| F'(\phi_\alpha(t))\|_{L^2}
\leq K_{2},
\end{equation}
and
\begin{equation}
\label{HE4-alpha}
\int_0^{T_0} 
\|\partial_t \uu_\alpha(\tau)\|_{L^2}^2
+\| \nabla \partial_t \phi_\alpha(\tau)\|_{L^2}^2
+\| \A \uu_\alpha(\tau)\|_{L^2}^2
+\| \mu_\alpha (\tau)\|_{H^3}^2 \, \d \tau \leq K_3.
\end{equation} 
Therefore, we can infer that 
\begin{equation}
\label{wl-SS-alpha}
\begin{split}
\begin{aligned}
&\uu_\alpha \rightharpoonup \uu_k \quad &&\text{weak-star in } L^\infty(0,T_0;\H^1_\sigma),\\
&\uu_\alpha \rightharpoonup \uu_k \quad  &&\text{weakly in } L^2(0,T_0;H^2)\cap W^{1,2}(0,T_0;\LL^2_\sigma),\\
&\phi_\alpha \rightharpoonup \phi_k \quad  &&\text{weak-star in } L^\infty(0,T_0;H^2(\Omega)),\\
&\phi_\alpha \rightharpoonup \phi_k \quad  &&\text{weakly in } 
W^{1,2}(0,T_0;H^1(\Omega)),\\
&\mu_\alpha  \rightharpoonup \mu_k \quad  &&\text{weak-star in } L^\infty(0,T_0;H^1(\Omega)),\\
&\mu_\alpha  \rightharpoonup \mu_k \quad  &&\text{weakly in } L^2(0,T_0;H^3(\Omega)).
\end{aligned}
\end{split}
\end{equation}
In a similar manner as above, we have
\begin{equation}
\label{sl-SS-alpha}
\begin{split}
\begin{aligned}
&\uu_\alpha \rightarrow \uu_k \quad &&\text{strongly in } L^2(0,T_0;\H^1_\sigma),\\
&\phi_\alpha \rightarrow \phi_k \quad  &&\text{strongly in } C([0,T_0];W^{1,p}(\Omega)), \\
&\rho(\phi_\alpha) \rightarrow \rho(\phi_k) \quad  &&\text{strongly in } C([0,T_0];W^{1,p}(\Omega)),\\
&\nu(\phi_\alpha) \rightarrow \nu(\phi_k) \quad  &&\text{strongly in } C([0,T_0];W^{1,p}(\Omega)),\\
\end{aligned}
\end{split}
\end{equation}
for all $p \in [2,6)$. In order to pass to the limit in $F'$, we observe that
$$
\phi_\alpha \in L^\infty(\Omega\times (0,T_0)) : |\phi_\alpha(x,t)|<1 \ \text{a.e. in } \  \Omega\times(0,T_0).
$$
Thanks to \eqref{sl-SS-alpha}$_2$, it follows that $\phi_\alpha\rightarrow \phi_k$ almost everywhere in $\Omega \times (0,T)$, and thereby
$$
\phi_k \in L^\infty(\Omega\times (0,T_0)) : |\phi_k(x,t)|<1 \ \text{a.e. in } \  \Omega\times(0,T_0).
$$
Then, we have that $F'(\phi_\alpha) \rightarrow F'(\phi_k)$ almost everywhere in $\Omega \times (0,T)$ and, by Fatou Lemma, $F'(\phi_k)\in L^2(\Omega \times (0,T))$. Owing to this, and by \eqref{HE2-alpha}, we conclude that 
$$
F'(\phi_\alpha) \rightharpoonup F'(\phi_k) \quad  \text{weakly in } 
L^\infty(0,T;\L2).
$$
Thus, letting $\alpha \rightarrow 0$ in \eqref{CH-alpha}-\eqref{NS-alpha}, we obtain
\begin{equation}
\label{NS-k}
\begin{split}
\left ( \rho(\phi_k) \partial_t \uu_k, \ww \right) &+ 
\left( \rho(\phi_k)(\uu_k \cdot \nabla)\uu_k,\ww\right)
- \left( \div( \nu(\phi_k) \D \uu_k),\ww \right) \\
& -\left(\rho'(\phi_k) (\nabla \mu_k \cdot \nabla) \uu_k,\ww\right)
-\left( \mu_k \nabla \phi_k, \ww \right)=0, 
\end{split}
\end{equation}
for all $\ww \in \LL^2_\sigma$, $t \in [0,T_0]$, and  
\begin{equation}
\label{CH-k}
\partial_t \phi_k+\uu_k \cdot \nabla \phi_k= \Delta \mu_k, \quad 
\mu_k= -\Delta \phi_k+\Psi'(\phi_k)
\quad \text{a.e. in } \ \Omega \times (0,T_0),
\end{equation}
together with
\begin{equation}
\label{bc-k}
\begin{cases}
\uu_k=\mathbf{0}, \quad \partial_\n \phi_k=\partial_\n \mu_k=0 \quad &\text{a.e. on } \partial \Omega \times (0,T),\\
\uu_k(\cdot,0)= \uu_{0}, \ \phi(\cdot,0)=\phi_{0,k} \quad &\text{in } \Omega.
\end{cases}
\end{equation}
Finally, since the estimates \eqref{HE1-alpha}-\eqref{HE4-alpha} are independent of $k$, we can further pass to the limit as $k\rightarrow \infty$. The argument readily follows the one above, and so it left to the reader. As a result, we obtain
\begin{equation}
\label{NS-f}
\begin{split}
\left ( \rho(\phi) \partial_t \uu + \rho(\phi)(\uu \cdot \nabla)\uu- \div( \nu(\phi) \D \uu)-\rho'(\phi) (\nabla \mu \cdot \nabla) \uu- \mu \nabla \phi, \ww \right)=0, 
\end{split}
\end{equation}
for all $\ww \in \LL^2_\sigma$, $t \in [0,T_0]$, and  
\begin{equation}
\label{CH-f}
\partial_t \phi+\uu \cdot \nabla \phi= \Delta \mu\quad
\mu= -\Delta \phi+\Psi'(\phi)
\quad \text{a.e. in } \ \Omega \times (0,T_0),
\end{equation}
together with
\begin{equation}
\label{bc-f}
\begin{cases}
\uu=\mathbf{0}, \quad \partial_\n \phi=\partial_\n \mu=0 \quad &\text{a.e. on } \partial \Omega \times (0,T),\\
\uu(\cdot,0)= \uu_{0}, \ \phi(\cdot,0)=\phi_0 \quad &\text{in } \Omega.
\end{cases}
\end{equation}
Recalling the well-known relation
$$
\mu \nabla \phi= - \div( \nabla \phi \otimes \nabla \phi) + \nabla \left( \frac12 |\nabla \phi|^2 + \Psi(\phi) \right),
$$
in a classical way, there exists $P \in L^2(0,T_0;H^1(\Omega))$, $\overline{P}(t)=0$ (see, e.g., \cite{Galdi}) such that
$$
\nabla P=-\rho(\phi) \partial_t \uu - \rho(\phi)(\uu\cdot \nabla)\uu+ \div( \nu(\phi) \D \uu)+\rho'(\phi) \nabla \uu \nabla \mu - 
\div(\nabla \phi \otimes \nabla \phi).
$$
Moreover, exploiting the regularity theory of the Cahn-Hilliard equation with logarithmic potential (see \cite[Lemma 2]{A2009} or \cite[Theorem A.2]{GMT2019}), we deduce that $\phi \in L^\infty(0,T; W^{2,6}(\Omega))$ and $F'(\phi)\in L^\infty(0,T;L^6(\Omega))$.

\section{Proof of Theorem \ref{mr1}. Part two: Uniqueness}
\label{Uni}
\setcounter{equation}{0}
Let $(\uu_1,P_1,\phi_1)$ and $(\uu_2,P_2,\phi_2)$ be two strong solutions to system \eqref{AGG}-\eqref{AGG-bc} defined on the interval $[0,T_0]$ as stated in Theorem \ref{mr1}. We define $\uu=\uu_1-\uu_2$, $P=P_1-P_2$ and $\phi=\phi_1-\phi_2$, which solve
\begin{align}
\label{U-u}
\begin{split}
&\rho(\phi_1)\partial_t \uu + (\rho(\phi_1)-\rho(\phi_2)) \partial_t \uu_2 +
\big(\rho(\phi_1)(\uu_1 \cdot \nabla) \uu_1- \rho(\phi_2)(\uu_2 \cdot \nabla) \uu_2\big)\\
&\quad - \frac{\rho_1-\rho_2}{2}\big( (\nabla \mu_1\cdot \nabla) \uu_1-(\nabla \mu_2 \cdot \nabla) \uu_2 \big)
- \div (\nu(\phi_1)\D\uu) 
-\div( (\nu(\phi_1)-\nu(\phi_2))\D \uu_2)\\
&\quad + \nabla P= - \div(\nabla \phi_1 \otimes \nabla \phi_1 -  \nabla \phi_2\otimes \nabla \phi_2),
\end{split}
\\[10pt]
\label{U-phi}
\begin{split}
&\partial_t \phi +\uu_1\cdot \nabla \phi + \uu \cdot \nabla \phi_2= \Delta \mu,\\
&\mu= -\Delta \phi+\Psi'(\phi_1)- \Psi'(\phi_2),
\end{split}
\end{align}
almost everywhere in $\Omega \times (0,T_0)$. We recall that 
\begin{equation}
\label{uniq-est}
\| \phi_i\|_{L^\infty(0,T_0;W^{2,6}(\Omega))}+ \| \partial_t \phi_i\|_{L^2(0,T_0; H^1(\Omega))} \leq K, \quad i=1, 2,
\end{equation}
where K is a positive constant only depending on $E(\uu_0,\phi_0)$, $\| \uu_0\|_{\H^1_\sigma}$, $\| \mu_0\|_{H^1}$ and $T_0$. As a consequence, we claim that
$$
\| \phi_i\|_{C^{\frac{5}{16}}([0,T_0]; C(\overline{\Omega}))} \leq C K, \quad i=1, 2,
$$
for some constant $C$ depending only on $\Omega$. Indeed, by \eqref{GN-W6}, we have
\begin{align*}
\| \phi_i(t_1)-\phi_i(t_2)\|_{C(\overline{\Omega})}
&\leq C \| \phi_i(t_1)-\phi_i(t_2)\|_{W^{1,4}}\\
&\leq C \| \phi_i(t_1)-\phi_i(t_2)\|_{H^1}^\frac58 
\| \phi_i(t_1)-\phi_i(t_2)\|_{W^{2,6}}^\frac38\\
&\leq C K^\frac38 \left( \int_{t_1}^{t_2} \| \partial_t \phi_i(\tau) \|_{H^1} \, \d \tau \right)^\frac58\\
&\leq C K^\frac38 \| \partial_t \phi_i\|_{L^2(0,T_0; H^1(\Omega))}^\frac58 |t_1-t_2|^\frac{5}{16}, \quad \forall \, t_1,t_2 \in [0,T_0], \, i=1,2.
\end{align*}
In light of the assumption $\| \phi_0\|_{L^\infty}=1-\delta_0$ for some $\delta_0>0$, we infer that
\begin{equation}
\label{sep-prop3}
\| \phi(t)\|_{L^\infty}\leq 1-\frac{\delta_0}{2}, \quad \forall \, t \in [0,T_1], \quad \text{where} \quad  T_1= \left( \frac{\delta_0}{2CK}\right)^\frac{16}{5}. 
\end{equation}
Owing to \eqref{sep-prop3}, it is possible to deduce by elliptic regularity that $\phi \in L^2(0,T_1;H^5(\Omega))$ and $\partial_t \mu \in L^2(0,T_1;(H^1(\Omega))')$.

Next, multiplying \eqref{U-u} by $\uu$ and integrating over $\Omega$, we find
\begin{equation}
\label{U-1}
\begin{split}
&\frac12 \ddt \int_{\Omega} \rho(\phi_1) |\uu|^2 \, \d x 
+ \int_{\Omega} \nu(\phi_1) |\D \uu|^2 \, \d x
\\
&= - \int_{\Omega} (\rho(\phi_1)-\rho(\phi_2) ) \partial_t \uu_2 \cdot \uu \, \d x
-\int_{\Omega} \rho(\phi_1) (\uu \cdot \nabla) \uu_2 \cdot \uu \, \d x\\
&\quad 
- \int_{\Omega} (\rho(\phi_1)-\rho(\phi_2))(\uu_2\cdot \nabla)\uu_2 \cdot \uu\, \d x
+\frac{\rho_1-\rho_2}{2} \int_{\Omega} ((\nabla \mu \cdot\nabla) \uu_2)  \cdot \uu \, \d x \\
&\quad -\int_{\Omega} (\nu(\phi_1)-\nu(\phi_2)) \D \uu_2 : \nabla \uu \, \d x
+ \int_{\Omega} (\nabla \phi_1 \otimes \nabla \phi + \nabla \phi\otimes \nabla \phi_2): \nabla \uu \, \d x\\
&= \sum_{i=1}^6 Z_i.
\end{split}
\end{equation} 
Here we have used that 
$$
-\int_{\Omega} \partial_t \rho(\phi_1) \frac{|\uu|^2}{2}\, \d x 
+\int_{\Omega} \rho(\phi_1) \uu_1\cdot \nabla \frac{|\uu|^2}{2} \, \d x 
-\frac{\rho_1-\rho_2}{2} \int_{\Omega} \nabla \mu_1\cdot \nabla \frac{|\uu|^2}{2} \, \d x=0.  
$$
Taking the gradient of \eqref{U-phi}$_1$, multiplying by $\nabla \Delta \phi$ and integrating over $\Omega$, we obtain
\begin{align*}
\frac12 \ddt \| \Delta \phi\|_{L^2}^2 + \| \Delta^2 \phi\|_{L^2}^2 
&= \int_{\Omega} \uu_1\cdot \nabla \phi \Delta^2 \phi \, \d x+
\int_{\Omega} \uu \cdot \nabla \phi_2 \Delta^2 \phi \, \d x
+ \int_{\Omega} \Delta( \Psi'(\phi_1)-\Psi'(\phi_2)) \Delta^2 \phi \, \d x\\
&= \sum_{i=7}^9 Z_i.
\end{align*}
Therefore, we arrive at 
\begin{align*}
\ddt \left( \frac12 \int_{\Omega} \rho(\phi_1) |\uu|^2 \, \d x  + \frac12 \| \Delta \phi\|_{L^2}^2 \right)
+ \int_{\Omega} \nu(\phi_1) |\D \uu|^2 \, \d x 
+ \| \Delta^2 \phi\|_{L^2}^2 = \sum_{i=1}^9 Z_i.
\end{align*}
Arguing in a similar way as in \cite[Section 6]{G2021}, it is easily seen that
\begin{equation*}
|Z_1+Z_2+Z_3+Z_5+Z_6|
\leq \frac{\nu_\ast}{2}\| \D \uu\|_{L^2}^2
+C \left(1+ \| \uu_2\|_{H^2}^2+ \| \partial_t \uu_2\|_{L^2}^2 \right)
\left( \|\uu \|_{L^2}^2 + \| \Delta \phi\|_{L^2}^2 \right).
\end{equation*}
By \eqref{uniq-est} and \eqref{sep-prop3}, together with Sobolev embeddings, we find
\begin{align*}
|Z_4| & \leq \int_{\Omega} \left| \left( \nabla \Delta \phi\cdot \nabla\right) \uu_2 \cdot \uu \right| \, \d x 
+ \int_{\Omega}  \left| \left( \nabla \left(\Psi'(\phi_1)-\Psi'(\phi_2) \right) \cdot \nabla\right) \uu_2 \cdot \uu \right| \, \d x\\
&\leq \| \nabla \Delta \phi\|_{L^6} \| \nabla \uu_2\|_{L^3} \| \uu\|_{L^2} + \| \Psi''(\phi_1)\|_{L^\infty} \| \nabla \phi\|_{L^6}
\| \nabla \uu_2\|_{L^3} \| \uu\|_{L^2}\\
&\quad + \left( \| \Psi'''(\phi_1)\|_{L^\infty}
+ \| \Psi'''(\phi_2)\|_{L^\infty}\right) \| \phi\|_{L^\infty} \| \nabla \phi_2\|_{L^\infty} \| \nabla \uu_2\|_{L^2} \| \uu\|_{L^2}\\
& \leq \frac16 \| \Delta^2 \phi\|_{L^2}^2+ C \| \nabla \uu_2\|_{L^3}^2 
\| \uu\|_{L^2}^2 + C\left(1+ \|\nabla \uu_2\|_{L^3} \right) \left( \| \uu\|_{L^2}^2 + \| \Delta \phi\|_{L^2}^2 \right).
\end{align*}
As to the remaining terms, by using \eqref{uniq-est} and \eqref{sep-prop3} once more, we have
\begin{align*}
\left| Z_7+ Z_8\right|
& \leq \| \uu_1\|_{L^3} \| \nabla \phi\|_{L^6} \| \Delta^2 \phi\|_{L^2} + \| \uu\|_{L^2} \| \nabla \phi_2\|_{L^\infty} \| \Delta^2 \phi\|_{L^2}\\
&\leq \frac16 \|\Delta^2 \phi \|_{L^2}^2+ C \left( \| \uu\|_{L^2}^2 +\| \Delta \phi\|_{L^2}^2\right),
\end{align*}
and 
\begin{align*}
|Z_9|&\leq \int_{\Omega} \left| \left( \Psi''(\phi_1) \Delta \phi + \left( \Psi''(\phi_1)-\Psi''(\phi_2) \right) \Delta \phi_2 \right) \Delta^2 \phi \right| \, \d x \\
& \quad +\int_{\Omega} \left| \left( \Psi'''(\phi_1) \left( |\nabla \phi_1|^2- |\nabla \phi_2|^2 \right) + \left( \Psi'''(\phi_1)-\Psi'''(\phi_2) \right) |\nabla \phi_2|^2  \right) \Delta^2 \phi \right| \, \d x\\
&\leq C \| \Delta \phi\|_{L^2} \| \Delta^2 \phi\|_{L^2} + C \left( \| \Psi'''(\phi_1)\|_{L^\infty}+ \| \Psi'''(\phi_2)\|_{L^\infty}\right) \| \phi\|_{L^\infty} \| \Delta \phi_2\|_{L^2} \| \Delta^2 \phi\|_{L^2}\\
&\quad + C \left( \| \nabla \phi_1\|_{L^\infty)}+\| \nabla \phi_2 \|_{L^\infty} \right) \| \nabla \phi\|_{L^2} \| \Delta^2 \phi\|_{L^2} \\
&\quad + \left( \| \Psi''''(\phi_1)\|_{L^\infty}+ \| \Psi''''(\phi_2)\|_{L^\infty}\right) \| \phi\|_{L^\infty} \|\nabla \phi_2\|_{L^\infty}^2 \| \Delta^2 \phi\|_{L^2}\\
&\leq \frac16 \|\Delta^2 \phi \|_{L^2}^2+ C \| \Delta \phi\|_{L^2}^2.
\end{align*}
In conclusion, we find the differential inequality
\begin{align*}
\ddt \left( \frac12 \int_{\Omega} \rho(\phi_1) |\uu|^2 \, \d x  + \frac12 \| \Delta \phi\|_{L^2}^2 \right) &
+ \frac{\nu_\ast}{2} \| \D \uu \|_{L^2}^2 
+ \frac12 \| \Delta^2 \phi\|_{L^2}^2 \\
\leq 
C(K) &\left( 1 + \| \uu_2\|_{H^2}^2+ \| \partial_t \uu_2\|_{L^2}^2 \right)
\left( \|\uu \|_{L^2}^2 + \| \Delta \phi \|_{L^2}^2 \right).
\end{align*}
An application of the Gronwall lemma implies the desired uniqueness of strong solutions on the time interval $[0,T_1]$.
 
\section{Proof of Theorem \ref{stab}: Stability}
\label{Sta}
\setcounter{equation}{0}

Let $(\uu,P,\phi)$ and $(\uu_H,P_H,\phi_H)$ be the strong solutions to the AGG model with density $\rho(\phi)$ and to the model H with constant density $\overline{\rho}$, respectively, defined on a common interval $[0,T_0]$. We recall that the existence of $(\uu_H,P_H,\phi_H)$ fulfilling the same regularity properties of $(\uu,P,\phi)$, as stated in Theorem \eqref{mr1}, has been proven in \cite[Theorem 5.1]{GMT2019}. For simplicity, we assume that the viscosity function is given by $\nu(s)= \nu_1 \frac{1+s}{2}+\nu_2 \frac{1-s}{2}$ (cf. \eqref{Jrhonu}) for both systems.
We define $\vv=\uu-\uu_H$, $Q=P-P_H$, $\varphi=\phi-\phi_H$, and the difference of the chemical potentials $w= \mu- \mu_H$. They clearly solve the problem
\begin{align}
\label{St1}
\begin{split}
&\left( \frac{\rho_1+\rho_2}{2}\right )\partial_t \vv 
+ \left( \frac{\rho_1-\rho_2}{2} \phi \right) \partial_t \uu
+ \left(\frac{\rho_1+\rho_2}{2}-\overline{\rho}\right) \partial_t \uu_H
+ \left( \rho(\phi)(\uu \cdot \nabla) \uu- \overline{\rho}(\uu_H \cdot \nabla) \uu_H \right)\\
&\quad - \left(\frac{\rho_1-\rho_2}{2}\right) \big( (\nabla \mu\cdot \nabla) \uu \big)
- \div\left(\nu(\phi) \D\vv \right) 
-\div\left( (\nu(\phi)-\nu(\phi_H)) \D \uu_H \right)\\
&\quad + \nabla Q= - \div(\nabla \phi \otimes \nabla \phi -
 \nabla \phi_H \otimes \nabla \phi_H),
\end{split}
\\[10pt]
\label{St2}
\begin{split}
&\partial_t \varphi +\uu\cdot \nabla \varphi + \vv \cdot \nabla \phi_H= \Delta w,\\
&w= -\Delta \varphi+\Psi'(\phi)- \Psi'(\phi_H),
\end{split}
\end{align}
almost everywhere in $\Omega \times (0,T_0)$. In addition, we have the boundary and initial conditions
\begin{align}
\label{St-bcic}
\vv=\mathbf{0}, \quad \partial_\n\varphi=\partial_\n w=0 \quad \text{on } \ \partial \Omega \times (0,T), 
\quad
\vv(\cdot, 0)=\mathbf{0}, \quad \varphi(\cdot, 0)=0 \quad \text{in } \ \Omega.
\end{align}

Multiplying \eqref{St1} by $\A^{-1} \vv$ and integrating over $\Omega$, we obtain
\begin{align*}
\left( \frac{\rho_1+\rho_2}{4}\right) \ddt \| \vv\|_{\sharp}^2
& + \int_{\Omega} \nu(\phi) \D \vv : \nabla \A^{-1} \vv \, \d x =
-\int_{\Omega} \left( \frac{\rho_1-\rho_2}{2} \phi \right) \partial_t \uu \cdot \A^{-1}\vv \, \d x \\
&- \int_{\Omega}    \left(\frac{\rho_1+\rho_2}{2}-\overline{\rho}\right) \partial_t \uu_H \cdot \A^{-1}\vv \, \d x
- \int_{\Omega} \big(\rho(\phi)(\uu \cdot \nabla) \uu- \overline{\rho}(\uu_H \cdot \nabla) \uu_H \big)\cdot  \A^{-1}\vv \, \d x\\
& + \int_{\Omega} \left(\frac{\rho_1-\rho_2}{2}\right) \big( (\nabla \mu\cdot \nabla) \uu \big) \cdot \A^{-1}\vv \, \d x
- \int_{\Omega} (\nu(\phi)-\nu(\phi_H)) \D \uu_H : \nabla \A^{-1}\vv \, \d x\\
& + \int_{\Omega} \nabla \phi \otimes \nabla \phi -
 \nabla \phi_H \otimes \nabla \phi_H : \nabla \A^{-1}\vv \, \d x.
\end{align*}
Following \cite[proof of Theorem 3.1]{GMT2019}, we infer that
\begin{equation}
\int_{\Omega} \nu(\phi) \D \vv : \nabla \A^{-1} \vv \, \d x \geq \frac{\nu_\ast}{2} \| \uu\|_{L^2}^2 - \int_{\Omega} \nu'(\phi) \D \A^{-1} \vv \nabla \phi \cdot \vv \, \d x +\frac12 \int_{\Omega} \nu'(\phi) \nabla \phi\cdot \vv \, \Pi \, \d x,
\end{equation}
where $\Pi \in L^\infty(0,T_0; H^1(\Omega))$ is such that $-\Delta \A^{-1}\vv+\nabla \Pi= \vv$ a.e. in $\Omega \times (0,T_0)$. In addition, it fulfills the estimates 
\begin{equation}
\label{PES}
\| \Pi\|_{L^2}\leq C \| \nabla \A^{-1}\vv\|_{L^2}^\frac12 
\| \vv\|_{L^2}^\frac12, \quad \| \Pi\|_{H^1}\leq C \| \vv\|_{L^2}.
\end{equation}
Therefore, we are led to
\begin{equation}
\label{S-1}
\begin{split}
&\left( \frac{\rho_1+\rho_2}{4}\right) \ddt \| \vv\|_{\sharp}^2
 +\frac{\nu_\ast}{2}\|\vv\|_{L^2}^2 \\
 & =
-\int_{\Omega} \left( \frac{\rho_1-\rho_2}{2} \phi \right) \partial_t \uu \cdot \A^{-1}\vv \, \d x 
- \int_{\Omega}  \left(\frac{\rho_1+\rho_2}{2}-\overline{\rho}\right) \partial_t \uu_H \cdot \A^{-1}\vv \, \d x\\
&\quad - \int_{\Omega} \big(\rho(\phi)(\uu \cdot \nabla) \uu- \overline{\rho}(\uu_H \cdot \nabla) \uu_H \big)\cdot  \A^{-1}\vv \, \d x
 + \int_{\Omega} \left(\frac{\rho_1-\rho_2}{2}\right) \big( (\nabla \mu\cdot \nabla) \uu \big) \cdot \A^{-1}\vv \, \d x\\
&\quad - \int_{\Omega} (\nu(\phi)-\nu(\phi_H)) \D \uu_H : \nabla  \A^{-1}\vv \, \d x + \int_{\Omega} \nabla \phi \otimes \nabla \phi -
 \nabla \phi_H \otimes \nabla \phi_H : \nabla \A^{-1}\vv \, \d x\\
&\quad
 + \int_{\Omega} \nu'(\phi) \D \A^{-1} \vv \nabla \phi \cdot \vv \, \d x -\frac12 \int_{\Omega} \nu'(\phi) \nabla \phi\cdot \vv \, \Pi \, \d x.
\end{split}
\end{equation}
On the other hand, multiplying \eqref{St2}$_2$ by $A^{-1}\varphi$, where $A$ is the Laplace operator with homogeneous Neumann boundary conditions, and integrating over $\Omega$, we get (see \cite[Proof of Theorem 3.1]{GMT2019} for more details)
\begin{equation}
\label{s-CH}
\begin{split}
\frac12 \ddt \| \varphi\|_{\ast}^2 + \frac12 \| \nabla \varphi\|_{L^2}^2
\leq C\| \varphi\|_{\ast}^2 + \int_{\Omega} \varphi \, \uu \cdot \nabla A^{-1} \varphi \, \d x  
+ \int_{\Omega} \phi_{H} \, \vv \cdot \nabla A^{-1}\varphi \, \d x.
\end{split}
\end{equation}
We proceed with the estimate of the terms on the right-hand side of \eqref{S-1} and \eqref{s-CH}. To this aim, we will exploit the following bounds on the solution
\begin{equation}
\begin{split}
\|(\uu, \uu_H) \|_{L^\infty(0,T_0;\H^1_\sigma)\cap L^2(0,T_0;\H_\sigma^2(\Omega))\cap W^{1,2}(0,T_0;\LL^2_\sigma)}\leq K_0,\\
\| (\phi, \phi_H)\|_{L^\infty(0,T_0;W^{2,6}(\Omega))}+\| \nabla \mu\|_{L^\infty(0,T_0;\L2)}\leq K_0,
\end{split}
\end{equation}
where $K_0$ is a constant depending on the norms of the initial conditions.
Exploiting this estimates, we have
\begin{equation*}
\begin{split}
\left| \int_{\Omega} \left( \frac{\rho_1-\rho_2}{2} \phi \right) \partial_t \uu \cdot \A^{-1}\vv \, \d x  \right| 
&\leq 
\left| \frac{\rho_1-\rho_2}{2} \right| \| \phi\|_{L^\infty} 
\| \partial_t \uu\|_{L^2} \| \A^{-1} \vv \|_{L^2}\\
&\leq C\| \vv \|_{\sharp}^2 + C \left| \frac{\rho_1-\rho_2}{2} \right|^2 
 \| \partial_t \uu\|_{L^2}^2, 
\end{split}
\end{equation*}
and
\begin{equation*}
\begin{split}
\left|  \int_{\Omega}  \left( \frac{\rho_1+\rho_2}{2}-\overline{\rho}\right) \partial_t \uu_H\cdot \A^{-1}\vv \, \d x \right| 
\leq  C\|\vv\|_{\sharp}^2
+ C \left| \frac{\rho_1+\rho_2}{2}- \overline{\rho}\right|^2  \| \partial_t \uu_H\|_{L^2}^2.
\end{split}
\end{equation*}
By Sobolev embedding, we find
\begin{equation*}
\begin{split}
&\left|  \int_{\Omega}  \big(\rho(\phi)(\uu \cdot \nabla) \uu- \overline{\rho}(\uu_H \cdot \nabla) \uu_H \big)\cdot  \A^{-1}\vv \, \d x \right|\\
&\leq \left| \int_{\Omega}  \rho(\phi)(\vv \cdot \nabla) \uu \cdot  \A^{-1}\vv \, \d x\right|
+ \left| \int_{\Omega}  \rho(\phi)(\uu_H \cdot \nabla) \vv\cdot  \A^{-1}\vv \, \d x\right| 
+\left| \int_{\Omega}  \big( \rho(\phi)-\overline{\rho}\big) (\uu_H \cdot \nabla) \uu_H \cdot  \A^{-1}\vv \, \d x\right|\\
&\leq \rho^\ast \| \vv\|_{L^2} \|\nabla \uu \|_{L^6}\| \A^{-1}\vv\|_{L^3}
+ \left|\int_{\Omega} \rho(\phi) ( \uu_H \cdot \nabla) \A^{-1}\vv \cdot \vv\, \d x 
+ \int_{\Omega} \rho'(\phi) (\nabla \phi\cdot \uu_H) \left( \vv \cdot \A^{-1}\vv\right) \, \d x \right|\\
&\quad + \|\rho(\phi)-\overline{\rho}\|_{L^\infty} 
\| \uu_H\|_{L^6} \| \nabla \uu_H\|_{L^2}\| \A^{-1} \vv \|_{L^3} \\
&\leq  \frac{\nu_\ast}{16}\| \vv\|_{L^2}^2
+C \left(1+\| \uu\|_{H^2}^2 \right) \|\vv \|_{\sharp}^2+
\rho^\ast \| \nabla \A^{-1}\vv\|_{L^2} \| \uu_H\|_{L^\infty} 
\| \vv\|_{L^2} \\
&\quad + \left| \frac{\rho_1-\rho_2}{2} \right| \|\nabla \phi\|_{L^\infty} 
\| \uu_H\|_{L^6} \| \vv\|_{L^2} \| \A^{-1}\vv\|_{L^3}+
C(K_0)\left( \left| \frac{\rho_1-\rho_2}{2}\right|^2 + \left| \frac{\rho_1+\rho_2}{2}- \overline{\rho}\right|^2 \right)\\
&\leq  \frac{\nu_\ast}{8}\| \vv\|_{L^2}^2
+C(K_0) \left(1+\| \uu\|_{H^2}^2 + \| \uu_H\|_{H^2}^2\right) \|\vv \|_{\sharp}^2+ C(K_0)\left( \left| \frac{\rho_1-\rho_2}{2}\right|^2 + \left| \frac{\rho_1+\rho_2}{2}- \overline{\rho}\right|^2 \right),
\end{split}
\end{equation*}
and
\begin{equation*}
\begin{split}
\left| \int_{\Omega} \left(\frac{\rho_1-\rho_2}{2}\right) \big( (\nabla \mu\cdot \nabla) \uu \big) \cdot \A^{-1}\vv \, \d x \right|
&\leq 
\left| \frac{\rho_1-\rho_2}{2}\right| \| \nabla \mu\|_{L^2} \| \nabla \uu\|_{L^3} \| \A^{-1}\vv\|_{L^6}\\
&\leq C \| \vv \|_{\sharp}^2 + C(K_0) \left| \frac{\rho_1-\rho_2}{2}\right|^2 
 \| \nabla \uu\|_{L^3}^2.
\end{split}
\end{equation*}
In a similar way as in \cite[Proof of Theorem 5.1]{GMT2019}, we obtain
\begin{equation*}
\begin{split}
\left| \int_{\Omega} (\nu(\phi)-\nu(\phi_H)) \D \uu_H : \nabla \A^{-1}\vv \, \d x \right| 
&\leq 
C \| \varphi\|_{L^6}\| \D \uu_H\|_{L^3} \| \nabla \A^{-1}\vv \|_{L^2}\\
&\leq  \frac{1}{6} \| \nabla \varphi\|_{L^2}^2+
C \| \uu_H\|_{H^2}^2
\|\vv \|_{\sharp}^2,
\end{split}
\end{equation*}
\begin{equation*}
\begin{split}
\left| \int_{\Omega} \left(\nabla \phi \otimes \nabla \phi -
 \nabla \phi_H \otimes \nabla \phi_H\right) : \nabla \A^{-1}\vv \, \d x
 \right| 
 &\leq \left( \| \nabla \phi\|_{L^\infty}+ \| \nabla \phi_H\|_{L^\infty}\right) \| \nabla \varphi \|_{L^2}\| \nabla \A^{-1}\vv\|_{L^2}\\
 &\leq \frac{1}{6} \| \nabla \varphi\|_{L^2}^2+
C(K_0) \|\vv \|_{\sharp}^2,
\end{split}
\end{equation*}
\begin{equation*}
\begin{split}
\left|  \int_{\Omega} \nu'(\phi) \D \A^{-1} \vv \nabla \phi \cdot \vv \, \d x 
\right| 
&\leq C \| \D \A^{-1}\vv \|_{L^2} \| \nabla \phi\|_{L^\infty} \| \vv\|_{L^2} \leq \frac{\nu_\ast}{8}\| \vv\|_{L^2}^2
+C(K_0) \|\vv \|_{\sharp}^2,
\end{split}
\end{equation*}
\begin{equation*}
\begin{split}
\left| \frac12 \int_{\Omega} \nu'(\phi) \left(\nabla \phi\cdot \vv\right) \Pi \, \d x \right| 
& \leq C \| \nabla \phi\|_{L^\infty} \| \vv\|_{L^2} \| \Pi\|_{L^2}\leq  \frac{\nu_\ast}{8}\| \vv\|_{L^2}^2 + C(K_0)\|\vv \|_{\sharp}^2,
\end{split}
\end{equation*}
\begin{equation*}
\begin{split}
\left|
\int_{\Omega} \varphi \, \uu \cdot \nabla A^{-1} \varphi \, \d x  
\right| \leq \frac16 \| \nabla \varphi\|_{L^2}^2 + C\| \uu\|_{H^2(\Omega)}^2
\| \varphi\|_{\ast}^2,
\end{split}
\end{equation*}
\begin{equation*}
\begin{split}
\left| \int_{\Omega} \phi_{H} \, \vv \cdot \nabla A^{-1}\varphi \, \d x \right| \leq \frac{\nu_\ast}{8}\| \vv\|_{L^2}^2 +C \| \varphi\|_{\ast}^2.
\end{split}
\end{equation*}
Collecting the above estimates together, we find the differential inequality
\begin{align*}
\ddt \bigg( \left( \frac{\rho_1+\rho_2}{4}\right)\| \vv\|_{\sharp}^2+ \frac12 \|\varphi\|_{\ast}^2 \bigg) \leq f_1(t) \big( \| \vv\|_{\sharp}^2+  \|\varphi\|_{\ast}^2\big) + f_2(t)  \Big(  \Big| \frac{\rho_1-\rho_2}{2}\Big|^2 + \Big| \frac{\rho_1+\rho_2}{2}- \overline{\rho}\Big|^2 \Big),
\end{align*}
where
\begin{align*}
&f_1(t)= C(K_0) \left(1+\| \uu_{H}\|_{H^2}^2+ \| \uu\|_{H^2}^2\right), \\
&f_2(t)= C(K_0) \left( 1+ \| \partial_t \uu_H\|_{L^2}^2+\| \uu_H\|_{H^2}^2
+\| \partial_t \uu\|_{L^2}^2+\| \uu\|_{H^2}^2 \right).
\end{align*}
Here, the positive constant $C$ depends on the norm of the initial data and the time $T_0$. By using the Gronwall lemma, together with the initial conditions \eqref{St-bcic}, we infer that 
$$
\|\vv(t)\|_{\sharp}^2 +  \| \varphi(t)\|_{\ast}^2 
\leq \frac{\left(  \left| \frac{\rho_1-\rho_2}{2}\right|^2 + \left| \frac{\rho_1+\rho_2}{2}- \overline{\rho}\right|^2 \right)}{\min \lbrace \frac{\rho_1+\rho_2}{4}, \frac12 \rbrace }  \int_0^t \mathrm{e}^{\int_s^t f_1(r)\, \d r} f_2(s) \, \d s, \quad \forall \, t\in [0,T_0].  
$$
Thus, the above inequality implies that
$$
\| \uu(t)-\uu_H(t)\|_{(\H^1_\sigma)'}+ \| \phi(t)-\phi_H(t)\|_{(H^1)'}
\leq \frac{C(K_0)}{\min \lbrace \sqrt{\rho_\ast}, 1 \rbrace } \left(  \left| \frac{\rho_1-\rho_2}{2}\right| + \left| \frac{\rho_1+\rho_2}{2}- \overline{\rho}\right| \right), \quad  \forall \, t \in [0,T_0],
$$
where the positive constant $C(K_0)$ depends on the norm of the initial data, the time $T_0$ and the parameters of the systems.

\appendix
\section{On the convective Viscous Cahn-Hilliard system}
\label{App-0}
\setcounter{equation}{0}

\noindent
Given $\alpha>0$ and an incompressible velocity field $\uu$, we consider the  convective Viscous Cahn-Hilliard (cVCH) system
\begin{equation}
\label{vCH}
\partial_t \phi + \uu \cdot \nabla \phi= \Delta \mu, \quad
\mu= \alpha \partial_t \phi - \Delta \phi+ \Psi'(\phi)
\quad \text{in } \Omega \times (0,T),
\end{equation}
with boundary and initial conditions
\begin{equation}
\label{vCH-2}
\partial_\n\phi=\partial_\n \mu=0 \quad \text{on } \partial \Omega \times 
(0,T), \quad 
\phi(\cdot,0)= \phi_0 \quad \text{in }  \Omega.
\end{equation}
We observe that \eqref{vCH} can be rewritten as 
$$
\partial_t (\phi- \alpha \Delta \phi)+ \uu \cdot \nabla \phi= \Delta (-\Delta \phi + F'(\phi)- \theta_0 \phi) \quad \text{in } \Omega \times (0,T).
$$
We state well-posedness and regularity results for system \eqref{vCH}.
The aim of this Appendix is to extend the analysis performed in \cite{MZ} to the convective case under minimal assumptions on the velocity field. In particular, we focus on the regularity of the chemical potential.

\begin{theorem}
\label{r-vCH}
Assume that 
$\uu \in L^\infty(0,T; \LL_\sigma^2(\Omega) \cap L^3(\Omega))$,  
$\phi_0 \in H^1(\Omega)\cap L^\infty(\Omega)$ such that $\| \phi_0\|_{L^\infty}\leq 1$ and $|\overline{\phi_0}|<1$.
Then, there exists a unique a weak solution to \eqref{vCH}-\eqref{vCH-2} such that
\begin{align}
\label{vCH-weak}
\begin{split}
&\phi \in L^\infty(0,T;H^1(\Omega) \cap L^\infty(\Omega)) : |\phi(x,t)| < 1 \ \text{a.e. in } \  \Omega\times(0,T),\\
& \phi \in L^2(0,T;H^2(\Omega))\cap W^{1,2}(0,T;L^2(\Omega)),\\
&\mu \in L^2(0,T;H^2(\Omega)), \quad F'(\phi) \in L^2(0,T;L^2(\Omega)),
\end{split}
\end{align}
which satisfies \eqref{vCH} almost everywhere in $\Omega \times (0,T)$, \eqref{vCH-2} almost everywhere on $\partial \Omega \times (0,T)$ and $\phi (\cdot, 0)= \phi_0(\cdot)$ in $\Omega$.
In addition, the following regularity results hold:
\smallskip

\begin{itemize}
\item[(R1)] If $-\Delta \phi_0+F'(\phi_0) \in L^2(\Omega)$ and $\partial_t \uu \in L^\frac43(0,T;L^1(\Omega))$, we have
\begin{align*}
&\partial_t \phi \in L^\infty(0,T;L^2(\Omega))\cap L^2(0,T;H^1(\Omega)),\\
&\phi \in L^\infty(0,T;H^2(\Omega)), \quad \mu \in L^\infty(0,T;H^2(\Omega)).
\end{align*}

\item[(R2)] Let the assumptions of (R1) hold. Suppose that $\| \phi_0\|_{L^\infty}\leq 1-\delta_0$,  for some $\delta_0 \in (0,1)$. Then, there exists $\delta>0$ such that
 \begin{equation}
\label{SP}
\max_{(x,t)\in \Omega \times (0,T)} |\phi(x,t)|\leq 1-\delta,
\end{equation}
and
$$
\phi \in L^2(0,T;H^3(\Omega)).
$$
\item[(R3)] Let the assumption of (R2) hold. Suppose that 
$\phi_0 \in H^3(\Omega)$ such that $\partial_\n \phi=0$ on $\partial \Omega$, and 
$\partial_t \uu \in L^2(0,T;L^\frac65(\Omega))$, we have
\begin{align*}
&\partial_t \phi \in L^\infty(0,T; H^1(\Omega))\cap L^2(0,T;H^2(\Omega)),\\
&\phi \in L^\infty(0,T;H^3(\Omega))\cap L^2(0,T;H^4(\Omega)),\\
& \partial_t^2 \phi \in L^2(0,T;L^2(\Omega)), \quad \partial_t \mu \in
L^2(0,T;L^2(\Omega)).
\end{align*}
\end{itemize}
\end{theorem}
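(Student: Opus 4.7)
\textbf{Proof proposal for Theorem \ref{r-vCH}.}
The plan is to adapt the approach of \cite{MZ} for the pure viscous Cahn--Hilliard equation to the convective setting with the minimal regularity $\uu \in L^\infty(0,T;\LL^2_\sigma\cap L^3(\Omega))$. I would employ a Faedo--Galerkin scheme in the eigenbasis of the Neumann Laplacian combined with a Lipschitz truncation $F'_\lambda$ of the singular part of the potential; the viscous term $\alpha\partial_t \phi$ turns the approximate chemical potential relation into a non-degenerate equation, yielding existence at the finite-dimensional level. The basic energy identity is obtained by testing \eqref{vCH}$_1$ by $\mu$ and \eqref{vCH}$_2$ by $\partial_t \phi$: the convective contribution $(\uu\cdot\nabla\phi,\mu)$ is recast via integration by parts (using $\div\uu=0$ and $\uu\cdot\n=0$) as $-(\phi\, \uu,\nabla \mu)$ and absorbed into $\tfrac12\|\nabla\mu\|_{L^2}^2$ thanks to $\uu \in L^\infty(0,T;L^3)$ and $\phi \in L^\infty(0,T;H^1)$. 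The mean value of $\mu$ and the $L^2$-bound of $F'(\phi)$ follow from the classical inequality $\int_\Omega |F'(\phi)| \, \d x \leq C\int_\Omega F'(\phi)(\phi-\overline{\phi_0}) \, \d x + C$ (valid since $|\overline{\phi_0}|<1$), giving $\mu \in L^2(0,T; H^1(\Omega))$ and, by elliptic comparison in the $\mu$-equation, $\phi, \mu \in L^2(0,T; H^2(\Omega))$. Passing to the limits $\lambda \to \infty$ and $m\to\infty$ by Aubin--Lions compactness and Fatou's lemma (exploiting the logarithmic divergence of $F$ to enforce $|\phi|<1$ a.e.) yields a weak solution. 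Uniqueness is then established by testing the difference of two solutions in the $(H^1(\Omega))'$-norm via the inverse Neumann Laplacian; the monotonicity of $F'$ and the additional $\alpha\|\phi_1-\phi_2\|_{L^2}^2$ coming from the viscous regularization close a Gronwall estimate without any strict separation assumption.

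For (R1) I would differentiate both equations in time (rigorously, by difference quotients at the Galerkin level) and test with $\partial_t \phi$, arriving at the inequality
\begin{equation*}
\tfrac{\d}{\d t}\bigl( \tfrac{\alpha}{2} \|\partial_t \phi\|_{L^2}^2\bigr) + \|\nabla \partial_t \phi\|_{L^2}^2 \leq C\bigl(1+ \|\mu\|_{H^1}^2\bigr)\|\partial_t \phi\|_{L^2}^2 + \bigl|(\partial_t \uu\cdot \nabla \phi, \partial_t \phi)\bigr|,
\end{equation*}
where $F'' \geq -\theta_0$ is absorbed into the Gronwall factor. The delicate contribution is the last term on the right: transferring derivatives via $\div \partial_t \uu=0$ and using $\phi \in L^\infty$, one reduces it to an $L^1$-duality pairing of $\partial_t \uu$ against a product involving $\phi$ and $\nabla \partial_t \phi$, which becomes tractable under $\partial_t \uu \in L^{4/3}(0,T;L^1)$ by H\"older in time combined with Gagliardo--Nirenberg interpolation. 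The initial value $\|\partial_t \phi(0)\|_{L^2}$ is controlled by evaluating the $\mu$-equation at $t=0$ thanks to $-\Delta \phi_0 + F'(\phi_0) \in L^2$. Elliptic regularity on $-\Delta \phi + F'(\phi) = \mu + \theta_0 \phi - \alpha \partial_t \phi$ then upgrades $\phi$ and $\mu$ to $L^\infty(0,T; H^2(\Omega))$.

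The main obstacle is the strict separation property (R2), which I would obtain by adapting the Moser--Alikakos iteration of \cite{MZ} to the convective setting. The crucial ingredient is that (R1), combined with the three-dimensional Sobolev embedding $H^2\hookrightarrow L^\infty$, yields $\mu \in L^\infty(\Omega\times (0,T))$. Testing the chemical potential relation against $(F'(\phi))_+^{p-1}$ for increasing $p$, the monotonicity of $F'$ makes the $-\Delta \phi$ contribution non-negative, the viscous term becomes a total time-derivative of a primitive of $(F')^{p-1}$, and the convective coupling $\uu\cdot\nabla\phi$ is absorbed through $\uu\in L^\infty(0,T;L^3)$; an $L^p$-to-$L^{2p}$ recursion analogous to \cite{MZ} then produces a bound on $\|F'(\phi)\|_{L^\infty(\Omega\times (0,T))}$ independent of $p$, which via the local invertibility of $F'$ near $\pm 1$ gives \eqref{SP}. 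With separation in hand, $\Psi^{(j)}(\phi)$ are pointwise bounded along the trajectory, and $\phi \in L^2(0,T;H^3(\Omega))$ follows by applying $\nabla$ to the $\mu$-equation and using standard elliptic regularity. Finally, (R3) is obtained by a further time differentiation tested with $\partial_t^2 \phi$: the compatibility $\partial_\n \phi_0=0$ and $\phi_0 \in H^3(\Omega)$ supply the initial regularity $\partial_t \phi(0) \in H^1$, while the improved assumption $\partial_t \uu \in L^2(0,T; L^{6/5}(\Omega))$ is exactly what is needed to control the convective cross-term via $\|\partial_t \uu\|_{L^{6/5}}\|\nabla \phi\|_{L^6}\|\partial_t \phi\|_{L^6}$ and close the Gronwall inequality for $\partial_t \phi \in L^\infty(0,T;H^1)\cap L^2(0,T;H^2)$ and $\partial_t^2 \phi, \partial_t \mu \in L^2(0,T;L^2)$.
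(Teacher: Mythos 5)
Your overall architecture for existence, uniqueness and (R1)/(R3) — Galerkin with a truncated potential, the energy identity with the convective term rewritten as $-(\phi\,\uu,\nabla\mu)$, the Miranville--Zelik inequality for $\overline{\mu}$, uniqueness in the $A^{-1}$-norm via monotonicity, and time difference quotients for the higher regularity — matches the paper's proof. There are, however, two places where the argument as written does not close. In (R1), you say you test the time-differentiated system with $\partial_t\phi$; but before separation is known this produces the term $\big(\nabla\partial_t\Psi'(\phi),\nabla\partial_t\phi\big)$, which involves the unbounded $F''(\phi)$ and cannot be "absorbed into the Gronwall factor", and the convective term becomes $-(\phi\,\partial_t\uu,\nabla\partial_t\phi)$, whose $L^1$--$L^\infty$ pairing would require $\nabla\partial_t\phi\in L^\infty(\Omega)$, which is not available. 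The paper tests with $A^{-1}\partial_t^h\phi$ instead: then the singular term reduces to $(\partial_t^h F'(\phi),\partial_t^h\phi)\ge 0$ by monotonicity alone, and the factor multiplying $\partial_t\uu\in L^1$ is $\nabla A^{-1}\partial_t^h\phi$, which \emph{is} controlled in $L^\infty(\Omega)$ by Agmon interpolation against the $\|\nabla\partial_t^h\phi\|_{L^2}$ dissipation. Your displayed inequality is the correct one, but it is the one produced by the $A^{-1}$ test function, not by $\partial_t\phi$.

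The genuine gap is (R2). A Moser--Alikakos iteration on powers of $F'(\phi)$ does not work for the logarithmic potential \eqref{Log}: the viscous term tested against $(F'(\phi))_+^{p-1}$ produces the time derivative of $\int_\Omega\Phi_p(\phi)\,\d x$ with $\Phi_p(s)=\int_0^s (F'(\sigma))_+^{p-1}\,\d\sigma$, and since $|\log(1-\sigma)|^{p-1}$ is integrable up to $\sigma=1$, $\Phi_p$ is \emph{uniformly bounded} on $[-1,1]$ and carries no information about $\|F'(\phi)\|_{L^p}$; moreover the Laplacian contribution $\int_\Omega F''(\phi)|F'(\phi)|^{p-2}|\nabla\phi|^2\,\d x$ cannot be converted into the Sobolev gain $\|\nabla |F'(\phi)|^{p/2}\|_{L^2}^2$ needed for the $L^p\to L^{2p}$ recursion, because that conversion requires an upper bound on $F''$, which blows up at $\pm1$. (Your remark that the convective coupling enters this step is also off: equation \eqref{vCH}$_2$ contains no transport term.) What actually delivers \eqref{SP} in the paper is a comparison-principle argument: since (R1) gives $h:=\mu+\theta_0\phi\in L^\infty(\Omega\times(0,T))$, the relation $\alpha\partial_t\phi-\Delta\phi+F'(\phi)=h$ is a semilinear parabolic (Allen--Cahn type) equation with bounded source; one constructs explicit super/sub-solutions $U,V$ solving the ODEs $\alpha U'+F'(U)=\|h\|_{L^\infty}$, $U(0)=1-\delta_0$ (and symmetrically for $V$), which stay in $[-1+\delta,1-\delta]$ because $F'(s)\to\pm\infty$ as $s\to\pm1$, and then shows $V\le\phi\le U$ by a Stampacchia truncation of $\phi-U$ using the monotonicity of $F'$ (the convection is harmless there since $\int_\Omega(\uu\cdot\nabla w^+)w^+\,\d x=0$). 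This comparison step is the missing idea; without it the separation \eqref{SP}, and hence everything in (R2)--(R3) that relies on pointwise bounds for $\Psi''(\phi),\Psi'''(\phi)$, is not established.
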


\begin{proof} The proof is divided in several parts. We notify the reader that the estimates herein proved are not independent of the viscous parameter $\alpha$.
\smallskip

\noindent
\textbf{Existence.} The existence of a weak solution satisfying \eqref{vCH-weak} is proved in a classical way\footnote{The interested reader might exploit the combination of the Galerkin method with the approximation of the logarithmic potential by smooth potentials.}. We proceed here by proving the basic {\it energy} estimates. First, we observe that, by integrating \eqref{vCH}$_1$ over $\Omega$ and using the boundary conditions, we have 
\begin{equation}
\label{E-mass}
\overline{\phi}(t)= \overline{\phi_0} \quad \text{and} \quad 
\overline{\partial_t \phi}(t)=0 \quad \forall \, t \in [0,T].
\end{equation}
Multiplying \eqref{vCH}$_1$ by $\mu$, integrating over $\Omega$, using the boundary conditions \eqref{vCH-2} and \cite[Lemma 4.3, Ch. IV]{SHOW1997}, we find
$$
\ddt \left( \int_{\Omega} \frac12 |\nabla \phi|^2 + \Psi(\phi) \, \d x \right)+
\| \nabla \mu\|_{L^2}^2 + \alpha \| \partial_t \phi\|_{L^2}^2 = \int_{\Omega} \phi \,  \uu \cdot \nabla \mu \, \d x.
$$
By the H\"{o}lder inequality and the boundedness of $\phi$, we simply obtain
$$
\ddt \left( \int_{\Omega} \frac12 |\nabla \phi|^2 + \Psi(\phi) \, \d x \right)+
\frac12 \| \nabla \mu\|_{L^2}^2 + \alpha \| \partial_t \phi\|_{L^2}^2 \leq  \frac12 \|\uu \|_{L^2}^2.
$$
Thus, integrating over $[0,T]$ and using the continuity of $\Psi$, we have
\begin{equation}
\label{E-en1}
\begin{split}
\| \nabla \phi\|_{L^\infty(0,T;L^2(\Omega))}+ 
\| \nabla \mu\|_{L^2(0,T;L^2(\Omega))}&+
\| \partial_t \phi\|_{L^2(0,T;L^2(\Omega))}\\
&\leq C_\alpha \big(  \sqrt{E_{\text{free}}(\phi_0)}+ \| \uu\|_{L^2(0,T;L^2(\Omega))}\big).
\end{split}
\end{equation}
In light of \eqref{normH1-2} and \eqref{E-mass}, we infer that 
\begin{equation}
\label{E-en2}
\| \phi\|_{L^\infty(0,T;H^1(\Omega))}\leq C_\alpha \big(  \sqrt{E_{\text{free}}(\phi_0)}+ \| \uu\|_{L^2(0,T;L^2(\Omega))}+ |\overline{\phi_0}|\big).
\end{equation}
Now, multiplying \eqref{vCH}$_2$ by $-\Delta \phi$ and integrating over $\Omega$, we get
\begin{align*}
\frac{\alpha}{2}\ddt \| \nabla \phi\|_{L^2}^2 +\| \Delta \phi\|_{L^2}^2+ \int_{\Omega} -F'(\phi) \Delta \phi\, \d x
= \int_{\Omega} \nabla \mu \cdot \nabla \phi\, \d x + \theta_0 \| \nabla \phi\|_{L^2}^2.
\end{align*}
The second term on the left-hand side is clearly positive by monotonicity. Then, using \eqref{E-en2} we obtain
\begin{equation}
\int_0^T \| \Delta \phi(\tau)\|_{L^2}^2 \,\d \tau
\leq \frac{\alpha}{2}\| \nabla \phi_0\|_{L^2}^2
+ C_\alpha (1+T) \big(  \sqrt{E_{\text{free}}(\phi_0)}+ \| \uu\|_{L^2(0,T;L^2(\Omega))}\big)^2,
\end{equation}
which entails that 
\begin{equation}
\label{E-en3}
\| \phi\|_{L^2(0,T;H^2(\Omega))}
\leq C_\alpha \left( 1+ \| \nabla \phi_0\|_{L^2}+
\sqrt{1+T}\left(  \sqrt{E_{\text{free}}(\phi_0)}+ \| \uu\|_{L^2(0,T;L^2(\Omega))}\right) \right).
\end{equation}
Next, we control the total mass of the chemical potential. 
Arguing as for the Cahn-Hilliard equation, we multiply \eqref{vCH}$_2$ by $\phi-\overline{\phi}$ and integrate over $\Omega$. We find
\begin{align*}
\int_{\Omega} |\nabla \phi|^2 \, \d x+ \int_{\Omega} F'(\phi)(\phi-\overline{\phi}) \, \d x
= \int_{\Omega} \mu (\phi-\overline{\phi})\, \d x + \theta_0 \| \phi-\overline{\phi}\|_{L^2}^2-
\alpha \int_{\Omega} \partial_t \phi (\phi-\overline{\phi}) \, \d x.
\end{align*}
By using the Poincar\'{e} inequality and \eqref{vCH-weak}$_1$, we find
$$
\int_{\Omega} F'(\phi)(\phi-\overline{\phi}) \, \d x
\leq C_\alpha \left( 1+\| \nabla \mu\|_{L^2}+  \| \partial_t \phi\|_{L^2} \right), 
$$
for some $C_\alpha$ depending on $\Omega$, $\theta_0$ and $\alpha$.
We are now in position to control a full Sobolev norm of $\mu$. Thanks to  \cite[Proposition A.1]{MZ}, there exist two positive constants $C_1$, $C_2$ (only depending on $\overline{\phi_0}$) such that
$$
\int_{\Omega} |F'(\phi)|\, \d x \leq C_1 \int_{\Omega} F'(\phi)(\phi-\overline{\phi_0}) \, \d x +C_2,
$$ 
thus we infer that 
$$
\| F'(\phi)\|_{L^1} \leq
C_\alpha \left( 1+ \| \nabla \mu\|_{L^2}+  \| \partial_t \phi\|_{L^2} \right).
$$
Since $\overline{\mu}= \overline{F'(\phi)}-\theta_0 \overline{\phi_0}$, the above control yields
\begin{equation}
\label{mu-bar}
|\overline{\mu}| \leq
C_\alpha \left( 1+ \| \nabla \mu\|_{L^2}+  \| \partial_t \phi\|_{L^2} \right).
\end{equation}
As a result, it immediately follows that
\begin{equation}
\label{E-en4}
\| \mu\|_{L^2(0,T;H^1(\Omega))}\leq C_\alpha \left( \sqrt{T}+  \sqrt{E_{\text{free}}(\phi_0)}+ \| \uu\|_{L^2(0,T;L^2(\Omega))} \right).
\end{equation}
In addition, by using \eqref{vCH}$_1$ we observe that
\begin{align*}
\| \Delta \mu\|_{L^2} \leq \| \partial_t \phi\|_{L^2}+ \| \uu\|_{L^3}
 \| \nabla \phi\|_{L^6}.
\end{align*}
Then, combining the elliptic regularity with \eqref{E-en1} and \eqref{E-en3}, we find
\begin{equation}
\label{E-en5}
\| \mu\|_{L^2(0,T;H^2(\Omega))}\leq 
C \left( \alpha, E_{\text{free}}(\phi_0),T\right) \left( \left(1+
\| \uu\|_{L^\infty(0,T;L^3(\Omega))}\right) \left(1+ \| \uu\|_{L^2(0,T;L^2(\Omega))} \right)\right).
\end{equation}
By comparison in \eqref{vCH}$_2$, a similar estimate can be obtained for $F'(\phi)$ in $L^2(0,T;L^2(\Omega))$.
\smallskip

\noindent
\textbf{Uniqueness.}
Let $\phi_1$, $\phi_2$ be two weak solutions. We define the solutions difference $\psi= \phi_1-\phi_2$ which solves
$$
\partial_t \psi+ \uu \cdot \nabla \psi= \Delta \big( \alpha \partial_t \psi-\Delta \psi+ \Psi'(\phi_1)-\Psi'(\phi_2) \big) \quad \text{in } \Omega \times (0,T).
$$
Since $\overline{\psi}(t)=0$ for all $t\in [0,T]$, multiplying by $A^{-1} \psi$, where the operator $A$ is the Laplace operator $-\Delta$ with homogeneous Neumann boundary conditions, and integrating over $\Omega$, we obtain
\begin{align*}
\frac12 \ddt \bigg( \| \nabla A^{-1}\psi\|_{L^2}^2 + \alpha \| \psi\|_{L^2}^2 \bigg)+ \|\nabla \psi\|_{L^2}^2 \leq \int_{\Omega} \psi \uu \cdot \nabla A^{-1} \psi \, \d x+ \theta_0 \| \psi\|_{L^2}^2.
\end{align*}
Here we have used that $F'$ is a monotone function. Observing that
$$
\left| \int_{\Omega} \psi \uu \cdot \nabla A^{-1} \psi \, \d x \right|
\leq \| \psi\|_{L^2}\| \uu\|_{L^3} \|\nabla A^{-1}\psi \|_{L^6} \leq C \| \uu\|_{L^3} \| \psi\|_{L^2}^2,
$$ 
it is easily seen that
$$
\frac12 \ddt \bigg( \|\nabla A^{-1} \psi\|_{L^2}^2 + \alpha \| \psi\|_{L^2}^2 \bigg) \leq C \left(1+ \| \uu\|_{L^3}\right) \| \psi\|_{L^2}^2.
$$
An application of the Gronwall lemma yields
$$
\| \nabla A^{-1} \psi (t)\|_{L^2}^2 + \alpha \| \psi(t)\|_{L^2}^2
\leq \left( \| \nabla A^{-1} \psi(0)\|_{L^2}^2 + \alpha \| \psi(0)\|_{L^2}^2\right) \mathrm{e}^{C_\alpha \int_{0}^t (1+ \| \uu(\tau)\|_{L^3}) \, \d \tau}
$$
for all $t\in [0,T]$, which implies the uniqueness of the solution.
\smallskip

\noindent
\textbf{Regularity 1.} For $h \in (0,1)$, we define the notation $\partial_t^h f (\cdot, t)=\frac{1}{h} \big( f(\cdot, t+h)-f(\cdot, t) \big)$. We observe that $\phi \in C([0,T]; H^1(\Omega))$ and $\uu \in C([0,T]; L^1(\Omega))$, thereby we can extend both $\phi$ and $\uu$ on $[0,T+1]$ by $\phi(t)=\phi(T)$ and $\uu(t)=\uu(T)$ for $t\in (T,T+1]$. It follows from \eqref{vCH} that 
\begin{equation}
\label{Dif-eq}
\partial_t \partial_t^h \phi 
+ \partial_t^h \uu \cdot \nabla \phi (\cdot +h)+ \uu \cdot  \nabla \partial_t^h \phi
= \Delta( \varepsilon \partial_t \partial_t^h \phi-\Delta \partial_t^h \phi+ \partial_t^h \Psi'(\phi) ) \quad \text{in } \Omega \times (0,T).
\end{equation}
We multiply the above equation by $A^{-1}\partial_t^h \phi$ and integrate over $\Omega$. Exploiting the monotonicity of $F'$, the boundary condition of $\uu$ and the Agmon inequality \eqref{Agmon3d}, we obtain 
\begin{align*}
\frac12 \ddt &\bigg( \| \nabla A^{-1} \partial_t^h \phi\|_{L^2}^2 + \alpha \| \partial_t^h \phi\|_{L^2}^2 \bigg) +
\| \nabla \partial_t^h \phi\|_{L^2}^2\\
&\leq
\int_{\Omega} \phi(\cdot+h) \partial_t^h \uu \cdot \nabla A^{-1} \partial_t ^h \phi \, \d x+ \int_{\Omega} \partial_t^h \phi \, \uu \cdot \nabla A^{-1}\partial_t^h \phi \, \d x + \theta_0 \| \partial_t^h \phi\|_{L^2}^2\\
&\leq
\| \partial_t^h \uu\|_{L^1} 
\| \nabla A^{-1} \partial_t ^h \phi \|_{L^\infty} 
+\| \partial_t^h \phi\|_{L^2} \| \uu \|_{L^3} 
\| \nabla A^{-1}\partial_t^h \phi\|_{L^6} 
+ \theta_0 \| \partial_t^h \phi\|_{L^2}^2\\
&\leq 
C \| \partial_t^h \uu\|_{L^1} 
\| \partial_t ^h \phi \|_{L^2}^\frac12
\| \nabla \partial_t^h \phi\|_{L^2} 
+ C\left(1+ \| \uu \|_{L^3} \right)  \| \partial_t^h \phi\|_{L^2}^2\\
&\leq \frac12 \| \nabla \partial_t^h \phi \|_{L^2}^2+
C\| \partial_t^h \uu\|_{L^1}^\frac43 \left(1+\| \partial_t^h \phi\|_{L^2}^2\right) + C\left(1+ \| \uu \|_{L^3}\right)  \| \partial_t^h \phi\|_{L^2}^2.
\end{align*}
The Gronwall lemma entails
\begin{equation}
\label{diff-est}
\begin{split}
 &\alpha \| \partial_t^h \phi (t)\|_{L^2}^2
 + \int_0^t \| \nabla \partial_t^h \phi (\tau)\|_{L^2}^2 \, \d \tau\\
 &\quad \leq \left( \| \nabla A^{-1} \partial_t^h \phi(0)\|_{L^2}^2 + \alpha \| \partial_t^h \phi(0)\|_{L^2}^2 + C\int_0^t \| \partial_t^h \uu(\tau)\|_{L^1}^\frac43 \, \d \tau \right) \mathrm{e}^{\int_0^t g(\tau) \, \d \tau}
 \end{split}
\end{equation}
for all $t\in [0,T]$, where $g(\tau)= C_\alpha \left( 1+ \| \uu\|_{L^3}+ \| \partial_t^h\uu\|_{L^1}^\frac43\right)$. 
In order to control the right-hand side, we compute
\begin{align*}
&\frac12 \ddt \bigg( \| \nabla A^{-1}(\phi-\phi_0)\|_{L^2}^2+ \alpha \| \phi-\phi_0\|_{L^2}^2 \bigg)
=(\alpha \partial_t \phi- \mu, \phi-\phi_0)+ (\phi \,\uu, \nabla A^{-1}(\phi-\phi_0))\\
&\quad = (\Delta \phi-\Psi'(\phi), \phi-\phi_0) + (\phi \, \uu, \nabla A^{-1}(\phi-\phi_0))\\
&\quad = \underbrace{(\Delta(\phi-\phi_0)-(F'(\phi-F'(\phi_0)),\phi-\phi_0)}_{\leq 0}
+(\Delta \phi_0- F'(\phi_0), \phi-\phi_0)+ \theta_0 (\phi,\phi-\phi_0)
\\
&\quad \quad + (\phi \, \uu, \nabla A^{-1}(\phi-\phi_0)).
\end{align*}
Therefore, we have
\begin{equation*}
\frac12 \ddt \bigg( \| \nabla A^{-1}(\phi-\phi_0)\|_{L^2}^2+ \alpha \| \phi-\phi_0\|_{L^2}^2\bigg)
\leq C\big(1+ \| \Delta \phi_0- F'(\phi_0)\|_{L^2}+ \| \uu\|_{L^2}\big) \| \phi-\phi_0\|_{L^2}.
\end{equation*}
Thanks to \cite[Lemma 4.1, Chap. IV]{SHOW1997}, we arrive at
\begin{align*}
\| \nabla A^{-1}(\phi(t)-\phi_0)\|_{L^2}^2+ \alpha \| \phi(t)-\phi_0\|_{L^2}^2\leq \bigg( C_\alpha \big(1+ \| \Delta \phi_0- F'(\phi_0)\|_{L^2} \big) t+ C_\alpha \int_0^t \| \uu (\tau)\|_{L^2} \, \d \tau \bigg)^2
\end{align*}
for all $t\in [0,T]$. By choosing $t=h$, we deduce that
\begin{equation}
\label{in-con}
\begin{split}
\| \nabla A^{-1} \partial_t^h \phi(0)\|_{L^2}^2 &+ \alpha \| \partial_t^h \phi(0)\|_{L^2}^2 
\leq C_\alpha \left(1+ \| \Delta \phi_0- F'(\phi_0)\|_{L^2}^2 + \| \uu\|_{L^\infty(0,T;L^2(\Omega))}^2 \right).
\end{split}
\end{equation}
Since $\|\partial_t^h \uu\|_{L^\frac43(0,T;L^1(\Omega))}\leq \| \partial_t \uu\|_{L^\frac43(0,T; L^1(\Omega))}$, by combining \eqref{diff-est} and \eqref{in-con}, we obtain
\begin{equation}
\begin{split} 
&\alpha \| \partial_t^h \phi (t)\|_{L^2}^2 + \int_0^t \| \nabla \partial_t^h \phi (\tau)\|_{L^2}^2 \, \d \tau\\
&\leq C_\alpha \left(  1+ \| \Delta \phi_0- F'(\phi_0)\|_{L^2}^2 + \| \uu\|_{L^\infty(0,T;L^2(\Omega))}^2) + \| \partial_t \uu\|_{L^\frac43(0,T; L^1(\Omega))}^\frac43 \right) \mathrm{e}^{G(T)},
\end{split}
\end{equation}
for all $t \in [0,T]$, where $G(T)= \int_0^T C_\alpha \left(1+ \| \uu(\tau)\|_{L^3} \right) \, \d \tau+ C_\alpha \int_0^{T} \| \partial_t \uu(\tau)\|_{L^1}^\frac43 \, \d \tau$.
In light of the convergence $\partial_t^h \phi \rightarrow \partial_t \phi$ in $L^2(0,T;L^2(\Omega))$ as $h \rightarrow 0$, we infer that 
\begin{equation}
\label{Reg1}
\| \partial_t \phi\|_{L^\infty(0,T;L^2(\Omega))}
+ \| \partial_t \phi\|_{L^2(0,T;H^1(\Omega))}
\leq C(\alpha, T, \| \Delta \phi_0-F'(\phi_0)\|_{L^2}, \|\uu\|_{X_T}),
\end{equation}
where $X_T=L^\infty(0,T;L^3(\Omega))\cap W^{1,\frac43}(0,T;L^1(\Omega))$. Next, we derive further regularity properties on $\phi$ and $\mu$. By the incompressibility constraint, we recall that $\| \nabla \mu\|_{L^2}\leq C \left(\| \partial_t \phi\|_{L^2}+ \| \uu \|_{L^2} \right)$. Then, thanks to \eqref{mu-bar} and \eqref{Reg1}, we easily have
\begin{equation}
\| \mu\|_{L^\infty(0,T;H^1(\Omega))}\leq C\left( \alpha, T, \| \Delta \phi_0-F'(\phi_0)\|_{L^2(\Omega)}, \|\uu\|_{X_T}\right).
\end{equation}
As a consequence, by \cite[Theorem A.1]{GMT2019} we get
\begin{equation}
\label{phi-H2-vch}
\| \phi\|_{L^\infty(0,T;H^2(\Omega))}+ \| F'(\phi)\|_{L^\infty(0,T;L^2(\Omega))}\leq C \left( \alpha, T, \| \Delta \phi_0-F'(\phi_0)\|_{L^2(\Omega)}, \|\uu\|_{X_T}\right).
\end{equation}
Finally, since $\uu \in L^\infty(0,T;L^3(\Omega))$ and $\nabla \phi \in L^\infty(0,T;L^6(\Omega))$, by comparison in \eqref{vCH}$_1$, we also find  
\begin{equation}
\label{mu-H2-vch}
\| \mu\|_{L^\infty(0,T;H^2(\Omega))}\leq C \left( \alpha, T, \| \Delta \phi_0-F'(\phi_0)\|_{L^2(\Omega)}, \|\uu\|_{X_T} \right).
\end{equation}

\noindent
\textbf{Regularity 2.} 
Let us now write \eqref{vCH}$_2$ as follows
\begin{equation}
\alpha \partial_t \phi - \Delta \phi +F'(\phi)= h \quad \text{in } \Omega \times (0,T),
\end{equation}
where $h=\mu+ \theta_0 \phi$. Thanks to \eqref{mu-H2-vch}, 
$h\in L^\infty(0,T; L^\infty(\Omega))$. Next, we consider the ODEs problems
\begin{equation}
\begin{cases}
\alpha \partial_t U +F'(U)= \overline{H}, \\
U(0)= 1-\delta_0
\end{cases}
\quad 
\begin{cases}
\alpha \partial_t V +F'(V)= \underline{H}, \\
V(0)= -1+\delta_0
\end{cases}
\quad \text{in } (0,T),
\end{equation}
where $\overline{H}=\| h\|_{L^\infty}$ and $\underline{H}= -\| h\|_{L^\infty}$.
It is not difficult to show that there exist two unique solutions $U, V \in C([0,T])$ with $U_t, V_t \in L^\infty(0,T)$. In particular, since $\lim_{s\rightarrow \pm 1} F'(s)= \pm \infty$ and $\overline{H}, \underline{H} \in L^\infty(0,T)$, a simple comparison argument entails that there exists $\delta>0$ such that
$$
-1+\delta \leq V(t)\leq U(t) \leq 1-\delta, \quad \forall \, t \in [0,T].
$$
More precisely, it can be checked that $1-\delta \leq \max \lbrace 1-\delta_0, (F')^{-1}(\|\overline{H}\|_{L^\infty(0,T)})\rbrace$. We are left to show that $V(t)\leq \phi(x,t)\leq U(t)$ in $\Omega \times [0,T]$. To this aim, we use the Stampacchia method. We define $w=\phi-U$ and we consider the problem 
\begin{equation}
\label{diff-sep}
\begin{cases}
\alpha \partial_t w + \uu\cdot \nabla \phi - \Delta \phi +
F'(\phi)-F'(U)= h-\overline{H} \quad &\text{in } \Omega \times (0,T),\\
w(0)= \phi_0-1+\delta_0 \quad &\text{in } \Omega. 
\end{cases}
\end{equation}
Multiplying the equation by $w^+=\max \lbrace \phi-U, 0 \rbrace$ and integrating over $\Omega$, and using that $\nabla \phi= \nabla w^+$ on the set $\lbrace x\in \Omega: \phi \leq U \rbrace$, we find
$$
\frac{\alpha}{2}\ddt \| w^+\|_{L^2}^2
+ \int_{\Omega} (\uu\cdot \nabla w^+) w^+ \, \d x + \| \nabla w^+\|_{L^2}^2
+ \int_{\Omega} (F'(\phi)-F'(U)) w^+ \, \d x= \int_{\Omega} (h-\overline{H})w^+\, \d x.
$$ 
By the monotonicity of $F'$, it follows that
$$
\ddt \| w^+\|_{L^2}^2 \leq 0 \quad \Rightarrow \quad
\| w^+(t)\|_{L^2}^2\leq \| w^+(0)\|_{L^2}^2 =0, \quad \forall \, t \in [0,T],
$$
which, in turn, gives the desired result, namely $\phi(x,t)\leq U(t)$ in $\Omega \times [0,T]$. A similar argument entails that $V(t)\leq \phi(x,t)$ in $\Omega \times [0,T]$.
Therefore, we obtain by continuity the separation property 
\begin{equation}
\label{SP-2}
\max_{(x,t)\in \overline{\Omega} \times [0,T]} |\phi(x,t)|\leq 1-\delta.
\end{equation}
As a consequence, it follows from \eqref{phi-H2-vch} that $\Psi'(\phi)\in L^\infty(0,T; H^1(\Omega))$. Then, we deduce by comparison in \eqref{vCH}$_2$ and by elliptic regularity that 
$$
\| \phi\|_{L^2(0,T;H^3(\Omega))}
\leq C\left( \alpha, T, \delta, \| \Delta \phi_0-F'(\phi_0)\|_{L^2}, \|\uu\|_{X_T} \right).
$$

\noindent
\textbf{Regularity 3.} Thanks to the above regularity, we rewrite \eqref{Dif-eq} as follows
\begin{equation}
\label{test-pt}
\int_{\Omega} \partial_t \partial_t^h \phi \, v + \alpha \nabla \partial_t \partial_t^h \phi \cdot \nabla v \, \d x + \int_{\Omega} \partial_t^h ( \uu\cdot \nabla \phi ) v \, \d x=
\int_{\Omega} ( \nabla \Delta \partial_t^h \phi - \nabla \partial_t^h \Psi'(\phi)) \cdot \nabla v \,\d x 
\end{equation}
for all $v \in H^1(\Omega)$. Taking $v= \partial_t^h \phi$ and exploiting the boundary conditions of $\phi$ and $\uu$, we find
\begin{align*}
&\frac12 \ddt \bigg( \| \partial_t^h \phi\|_{L^2}^2+ \alpha \| \nabla \partial_t^h \phi\|_{L^2}^2 \bigg) + \int_{\Omega} |\Delta \partial_t^h \phi|^2 \, \d x \\
&\quad 
= \int_{\Omega} \partial_t^h(\uu \phi) \cdot \nabla \partial_t^h \phi \, \d x
+ \int_{\Omega} \partial_t^h F'(\phi) \Delta \partial_t^h \phi \, \d x +\theta_0 \| \nabla \partial_t^h \phi\|_{L^2}^2\\
&\quad \leq \| \partial_t^h \uu\|_{L^\frac65} \| \nabla \partial_t^h \phi\|_{L^6 }+ \| \uu\|_{L^3 }\| \partial_t^h \phi\|_{L^6 } 
\| \nabla \partial_t^h \phi\|_{L^2} + C \| \partial_t^h \phi\|_{L^2} \| \Delta \partial_t^h \phi\|_{L^2}+\theta_0 \| \nabla \partial_t^h \phi\|_{L^2(\Omega)}^2\\
&\quad \leq \frac12 \| \Delta \partial_t^h \phi\|_{L^2}^2+
C\| \partial_t^h \uu\|_{L^\frac65}^2+C \left(1+ \| \uu\|_{L^3}\right) \| \nabla \partial_t^h \phi\|_{L^2}^2 +C \|\partial_t^h \phi \|_{L^2}^2.  
\end{align*}
Here we have used the separation property \eqref{SP-2} and the inequality 
$\| \partial_t^h \phi\|_{H^2}\leq C \| \Delta \partial_t^h \phi \|_{L^2}$. Then, we infer from the Gronwall lemma that
\begin{equation}
\label{Reg2}
\begin{split}
&\| \partial_t^h \phi(t)\|_{L^2}^2+ \alpha \| \nabla \partial_t^h \phi (t)\|_{L^2}^2 + \int_0^t \| \Delta \partial_t^h \phi(\tau)\|_{L^2}^2 \, \d \tau\\
&\quad \leq \left( \| \partial_t^h \phi(0)\|_{L^2}^2+ \alpha \| \nabla \partial_t^h \phi (0)\|_{L^2}^2+ C \int_0^t \| \partial_t^h \uu(\tau)\|_{L^\frac65}^2 \,\d \tau \right) \mathrm{e}^{\widetilde{G}(T)}
\end{split}
\end{equation}
for all $t\in [0,T]$, where $\widetilde{G}(T)= C_\alpha \int_0^T (1+\| \uu(\tau)\|_{L^3})\, \d \tau $.
Since $\partial_\n \phi_0=0$ on $\partial \Omega$ by assumption, we observe that
\begin{align*}
&\frac12 \ddt \bigg( \| \phi-\phi_0\|_{L^2}^2+ \alpha \| \nabla (\phi-\phi_0)\|_{L^2}^2 \bigg) \\
& \quad = \int_{\Omega} \phi \, \uu \cdot \nabla (\phi-\phi_0) \, \d x
+ \int_{\Omega} \nabla (\Delta \phi- F'(\phi)+ \theta_0 \phi) \cdot \nabla (\phi-\phi_0) \, \d x\\
& \quad = \int_{\Omega} \phi \, \uu \cdot \nabla (\phi-\phi_0) \, \d x 
- \| \Delta(\phi-\phi_0)\|_{L^2}^2
+ \int_{\Omega} \nabla \Delta \phi_0 \cdot \nabla (\phi-\phi_0) \, \d x\\
&\quad \quad
+\int_{\Omega} \nabla (-F'(\phi)+\theta_0 \phi) \cdot \nabla (\phi-\phi_0) \, \d x.
\end{align*}
Thus, we obtain
$$
\frac12 \ddt \bigg( \| \phi-\phi_0\|_{L^2}^2+ \alpha \| \nabla (\phi-\phi_0)\|_{L^2}^2 \bigg)
\leq C\big(1+ \| \uu\|_{L^2} + \| \phi_0\|_{H^3} \big) \| \nabla (\phi-\phi_0)\|_{L^2}.
$$
By using \cite[Lemma 4.1, Chap. IV]{SHOW1997} and taking $t=h$, we arrive at 
\begin{equation}
\label{dato-H1}
\|\partial_t^h \phi(0) \|_{L^2}^2 +\alpha \| \nabla \partial_t^h \phi (0)\|_{L^2}^2 
\leq C_\alpha \left( 1+ \| \phi_0\|_{H^3}^2 + \| \uu\|_{L^\infty(0,T;L^2(\Omega))}^2\right).
\end{equation}
Combining the above inequality with \eqref{Reg2}, we are led to 
\begin{align*}
&\| \partial_t^h \phi(t)\|_{L^2}^2+ \alpha \| \nabla \partial_t^h \phi (t)\|_{L^2}^2 + \int_0^t \| \Delta \partial_t^h \phi(\tau)\|_{L^2}^2 \, \d \tau\\
&\quad \leq C_\alpha \left( 1+ \| \phi_0\|_{H^3}^2+ \| \uu\|_{L^\infty(0,T;L^2(\Omega))}^2 +\| \partial_t \uu\|_{L^2(0,T+1;L^\frac65(\Omega))}^2 \right) \mathrm{e}^{C\int_0^t (1+\| \uu(\tau)\|_{L^3})\, \d \tau}
\end{align*}
for all $t\in [0,T]$, which, in turn, implies 
\begin{equation}
\label{phi_t-vch}
\| \partial_t \phi\|_{L^\infty(0,T;H^1(\Omega))}+ \| \partial_t \phi\|_{L^2(0,T;H^2(\Omega))} \leq C(\alpha, T, \delta, \|\phi_0\|_{H^3}, \| \uu\|_{Y_T}),
\end{equation}
where $Y_T= L^\infty(0,T;L^3(\Omega))\cap W^{1,2}(0,T;L^\frac65(\Omega))$.
As an immediate consequence, in light of \eqref{phi-H2-vch}, \eqref{mu-H2-vch} and \eqref{SP-2}, we infer by comparison in \eqref{vCH}$_2$ that 
\begin{equation}
\| \phi\|_{L^\infty(0,T;H^3(\Omega))}+ \| \phi\|_{L^2(0,T;H^4(\Omega))}
\leq C(\alpha, T, \delta, \|\phi_0\|_{H^3(\Omega)}, \| \uu\|_{Y_T}),
\end{equation}
Next, we take $v= A^{-1} \partial_t^h \partial_t \phi$ in \eqref{test-pt}. 
Exploiting \eqref{SP-2} and \eqref{phi_t-vch}, we obtain
\begin{align*}
\frac12 \ddt \| \nabla \partial_t^h \phi\|_{L^2}^2 
&+ \| \nabla A^{-1}\partial_t^h \partial_t \phi\|_{L^2}^2
+\alpha \| \partial_t^h \partial_t \phi\|_{L^2}^2 \\
&\quad \leq \int_{\Omega} \partial_t^h (\phi \uu)\cdot \nabla A^{-1}\partial_t^h \partial_t \phi \,\d x - \int_{\Omega} \partial_t^h \Psi'(\phi) \partial_t^h \partial_t \phi \, \d x\\
&\quad \leq C \| \partial_t \uu\|_{L^\frac65} \|\partial_t^h \partial_t\phi \|_{L^2}+ C \| \uu\|_{L^3} \| \partial_t \phi\|_{L^2} \| \nabla A^{-1}\partial_t^h \partial_t \phi\|_{L^6} +C \| \partial_t^h \phi \|_{L^2} 
\| \partial_t^h \partial_t \phi \|_{L^2}\\
&\quad \leq \frac12\|\partial_t^h \partial_t\phi \|_{L^2}^2+
 C \left( 1+\| \partial_t \uu\|_{L^\frac65}^2 + \| \uu\|_{L^3}^2 \right).
\end{align*}
By recalling \eqref{dato-H1}, the Gronwall lemma entails
\begin{equation}
\int_0^T \| \partial_t^h \partial_t \phi\|_{L^2}^2 \, \d \tau
\leq C(\alpha, T, \delta, \|\phi_0\|_{H^3}, \| \uu\|_{Y_T}),
\end{equation}
which, in turn, gives that there exists $\partial_t^2 \phi \in  L^2(0,T;L^2(\Omega))$ such that 
$$
\| \partial_t^2 \phi\|_{L^2(0,T;L^2(\Omega))}\leq C(\alpha, T, \delta, \|\phi_0\|_{H^3}, \| \uu\|_{Y_T}).
$$
Thus, by comparison in \eqref{vCH}, we conclude that there exists $\partial_t \mu \in L^2(0,T;L^2(\Omega))$ such that 
$$
\| \partial_t \mu \|_{L^2(0,T;L^2(\Omega))}
\leq C(\alpha, T, \delta, \|\phi_0\|_{H^3(\Omega)}, \| \uu\|_{Y_T}).
$$ 
The proof is complete.
\end{proof}

\end{document}